\newtheorem{theo}{Theorem}[section] 
\newtheorem*{theo*}{Theorem}
\newtheorem{lem}[theo]{Lemma} 
\newtheorem{prop}[theo]{Proposition}
\newtheorem{cor}[theo]{Corollary}
\newtheorem{rem}{Remark}[section]
\newtheorem{definition}{Definition}[section]
\newcommand{\R}{\mathbb{R}}\newcommand{\Rn}{\R^n}\newcommand{\Rnn}{\R^{n\times n}}
\newcommand{\N}{\mathbb{N}}
\DeclareMathOperator{\supp}{supp}
\DeclareMathOperator{\diver}{div}
\DeclareMathOperator{\curl}{curl}
\DeclareMathOperator{\tr}{tr} 
\DeclareMathOperator{\pv}{pv}
\DeclareMathOperator{\dist}{dist}
\DeclareMathOperator{\loc}{loc}
\renewcommand{\O}{\Omega}
\renewcommand{\a}{\alpha}
\renewcommand{\b}{\beta}
\renewcommand{\d}{\delta}
\renewcommand{\l}{\lambda}
\newcommand{\p}{\partial}
\newcommand{\e}{\varepsilon}
\newcommand{\f}{\varphi}
\newcommand{\qqquad}{\qquad\qquad}
\def\XXint#1#2#3{{\setbox0=\hbox{$#1{#2#3}{\int}$}
		\vcenter{\hbox{$#2#3$}}\kern-.5\wd0}}
\begin{document}
	\title {Nonlocal Green theorems and Helmholtz decompositions for truncated 
	fractional gradients}

	\author{Jos\'e Carlos Bellido \and Javier Cueto \and Mikil D. Foss \and Petronela Radu}
%

\date{}


	
	

\maketitle	

	\begin{abstract}
 In this work we further develop a nonlocal calculus theory (initially introduced in \cite{BeCuMC22}) associated with singular fractional-type operators which exhibit kernels with finite support of interactions. The applicability of the framework to nonlocal elasticity and the theory of peridynamics has attracted increased interest and motivation to study it and find connections with its classical counterpart. In particular, a critical contribution of this paper is producing vector identities, integration by part type theorems (such as the Divergence Theorem, Green identities), as well as a Helmholtz-Hodge decomposition. The estimates, together with the analysis performed along the way provide stepping stones for proving additional results in the framework, as well as pathways for numerical implementations. 

	\end{abstract}

\noindent{\bf Keywords: } 
Nonlocal gradient, Nonlocal Helmholtz Decomposition, Peridynamics, Nonlocal Green Identities, Fundamental Solution, Nonlocal Vector Calculus.

\noindent{\bf MSC Classification:  } 
26B20, 
31B10, 
35C15, 
41A35, 
42B20, 
45A05, 
45E10, 
45P05, 
46N20, 
47G10, 
47G20.  

	\section[Introduction]{Introduction}

	In the last couple of decades there has been a significant increase in employing nonlocal frameworks for modeling real phenomena. One such example is the fractional Laplacian, possibly the most famous nonlocal operator in the literature, obtained as a generalization of the classical Laplacian to non-integer differentiability indices. The growing prominence of nonlocal models is due to their ability to handle less regular functions and accommodate long range interaction forces. This is of particular interest for systems that model phenomena with singularities. A prototypical example in this direction is the theory of peridynamics introduced in \cite{Silling2000} to model dynamic fracture. 
	
 Different modeling purposes dictate that physical quantities must capture information around a point, so to this end we consider one-point nonlocal gradients, defined through an integral. A specific structure for one-point nonlocal gradients, inspired by the theory of state-based peridynamics \cite{silling2010linearized} is considered in \cite{MeS,Med,MeD16,DeGuOlKa21} and is given by:
	\begin{equation*}
		\mathcal{G}_\rho u(x)=\int \frac{u(x)-u(y)}{|x-y|}\frac{x-y}{|x-y|} \rho(x-y)\, dy
	\end{equation*}
	with $\rho \in L^1(\Rn)$. 
	
	The choice of the interaction kernel will determine the analytical properties of the framework. In \cite{DuTi18},  $\tilde{\rho}(x)=\frac{\rho(x)}{|x|}$ was chosen symmetric and non-integrable in order to enjoy a Poincaré inequality (thus avoiding instabilities in the Dirichlet energy). This feature is still shown to continue to hold if symmetry is abandoned for integrable kernels $\widetilde{\rho}$ (see \cite{FossPoincare}), and in particular for kernels supported over half-balls as in \cite{HanTian}.
	
	In this paper we focus on symmetric kernels as they lead to symmetry of second derivatives and vector identities similar to classical calculus (e.g. the curl of the gradient vanishes, etc.). Moreover, symmetric kernels are also predominantly employed in physical models. One particular operator with a symmetric kernel is the Riesz fractional gradient, where in the formulation above we choose $\rho(x)=c_{n,s}\frac{1}{|x|^{n-1+s}}$ and the integration is performed over $\Rn$ \cite{ShS2015,ShS2018}. 
	In order to consider systems on bounded domains, as they predominantly appear in applications, here we  consider a smoothly truncated version of this operator, whose functional space domain and minimization results were studied in \cite{BeCuMC22b,CuKrSc23,BeCuMC22}. It has been shown in these works that the resulting framework is suitable for nonlocal hyperelasticity. 
	
	To be more precise, the functional spaces with which we will work here are defined as the closure of smooth functions under the norm given by
	\begin{equation*}
		\|u\|_{H^{s,p,\d}(\O)}=\|u\|_{L^p(\O_\d)} + \|D^s_\d u\|_{L^p(\O)},
	\end{equation*}
 where the spaces and domains above will be introduced in the sequel. While the functional domains will require slightly more than $L^p$ integrability, this may be sufficient for modelling purposes aiming to include singularity phenomena. Indeed, jump functions representing discontinuous phenomena along a hypersurface belong to these spaces as long as $sp<1$, whereas discontinuous functions at a point, such as $\frac{x}{|x|^n}$, are found in these spaces for $sp<n$ (see the discussion in \cite[Section 6]{BeCuMC22b}). As these spaces are initially defined as the closure of smooth functions under a natural norm, nonlocal gradients of non-smooth functions are defined as limits of gradients of smooth ones. However, thanks to \cite[Lemma 4.2]{BeCuMC22b} or \cite[Theorem 2.13]{CuKrSc23}, we have a more convenient formulation given by the following formula
	\begin{equation*}
		D^s_\d u(x)= \nabla \int_{B(x,\d)} u(y) Q^s_\d (x-y) \, dy
	\end{equation*} 
	where $Q^s_\d(x)=\displaystyle\frac{q_\d(x)}{|x|^{n-1+s}}$, a smoothly truncated Riesz potential, is an integrable function that plays the role of the latter in this framework over bounded domains. We use $\nabla$ to denote the standard gradient operator, and we will refer to $\delta$ as the horizon of interaction.

\subsection{Motivation for the framework. Literature review} 

Nonlocal frameworks are able to capture complex, long-range interactions that traditional local models cannot adequately represent. Among their defining characteristics, nonlocal models have the advantage of collecting information in a neighborhood, a physical trait that is essential in some applications, such as dynamic fracture or collective behavior. Finally, the solution spaces for nonlocal systems are considerably larger, allowing for more physically salient inputs. 

As mentioned above, the protagonist operators in this paper are one-point nonlocal gradients, defined through an integral, motivated by the theory of state-based peridynamics. In order to endow the framework with tools such as a nonlocal fundamental theorem of calculus, compact embedding theorems, or a Poincaré-Sobolev inequality, the kernel must exhibit a strong singularity, so the operators behave like fractional derivatives. As the infinite support of the kernel is unphysical and creates difficulties in analysis and numerical simulations, we will set the analysis in the framework established in \cite{BeCuMC22} that exhibits smoothly truncated fractional gradients. A particular advantage offered by compactly supported kernels is seen in tackling boundary value problems on bounded domains, in particular, imposing boundary conditions on sets of finite measure. The framework was shown to fit well with nonlocal hyperelasticity \cite{BeCuMC22b}, moreover, the set of tools and properties that it enjoys renders it as a versatile setting for a wide variety of models.

Among the results obtained in this paper, we prove a nonlocal version of the Helmholtz-Hodge decomposition, an important theoretical result that in the classical framework has vast applicability. Obtaining this type of decomposition has the potential to advance the understanding and applicability of nonlocal models, so there has already been some progress in this direction, with more advances forthcoming. Previously, nonlocal Helmholtz decompositions were obtained on bounded domains for frameworks that involve: two-point gradients \cite{delia2020helmholtz}; one-point convolution gradients $u*\widetilde{\rho}$ with $\widetilde{\rho}\in L^1(\Rn)$ \cite{haarradu}; half-ball supported nonlocal gradients \cite{HanTian}; and Riesz fractional gradients \cite{delia2021connections}. As it was mentioned above, here we consider a nonlocal gradient with a fractional singularity but defined over bounded domains. This will give rise to a decomposition with boundary terms over a collar of width $2\d$, as the Euler-Lagrange equations obtained from the variational problems \cite{BeCuMC22,CuKrSc23,BeCuMC22b} involve terms where two nonlocal derivatives are taken.

As it has previously been mentioned, the framework based on the gradient $D^s_\d$ was first introduced in \cite{BeCuMC22,Cue21}, developing, in particular, the necessary functional analysis results necessary to prove the existence of minimizers of convex energy functionals such as continuous and compact embeddings and, in particular, nonlocal Poincar\'e inequality. All of these were obtained thanks to a nonlocal version of the fundamental theorem of calculus, a representation result showing that smooth functions can be recovered through convolution from their nonlocal gradients. The development of this theory has continued by proving existence of minimizers of polyconvex energy functionals (through a nonlocal Piola Identity and weak continuity of $\det D^s_\d u$) in \cite{BeCuMC22b} as well as quaxiconvex energy functionals in\cite{CuKrSc23}. The latter was obtained through some formulas relating classical and nonlocal gradients, and complemented with homogenization and relaxation results, as well as the continuity on $s$ of the functionals, including as a particular case the localization when $s$ goes to $1$. The Euler-Lagrange equations obtained in \cite{BeCuMC22b,BeCuMC22} were linearised in \cite{BeCuMC23}, where the resulting model is well posed thanks to a nonlocal Korn inequality and was shown to coincide with the nonlocal Eringen model under the proper choice of the kernel. 

Another point of interest in our investigation is the nonlocal Laplacian associated to our operator and its fundamental solution. Fundamental solutions for nonlocal Laplacians have been obtained for formulations with integrable kernels \cite{radu2019doubly}, \cite{haarradu}, as well as for Riesz-fractional ones \cite[Th. 2.8]{Bucur2016}, \cite[Th. 6.1]{delia2021connections}. 

	\subsection{Main contributions and organization of the paper}

 The nonlocal framework in which the results of this paper are set has been first introduced in \cite{Cue21}. As it was shown, these nonlocal operators can be valuable tools in elasticity, so we continue the analytical study of this framework here. The list below offers a summary of this paper's contributions.
 \begin{enumerate}[(i)]
 \item In \cite{Cue21, BeCuMC22, BeCuMC22b} the nonlocal divergence, gradient, and Laplacian were introduced. Here we introduce the nonlocal curl which, together with the other operators, will form the basis of the nonlocal vector calculus theory that we will study.   
 \item Following the roadmap of classical calculus, we prove here nonlocal versions of vector calculus identities, the Divergence Theorem, integration by parts (introduced here with boundary terms and for more general functions), as well as nonlocal counterparts to the three Green identities.
 \item We derive the fundamental solution to the nonlocal Laplacian $\Delta^s_\delta$ and study its behavior. In particular, we show that it behaves like the fundamental solution of the classical Laplacian at infinity and like the fundamental solution of the fractional Laplacian around the origin, thus showcasing the versatility of this nonlocal operator in capturing different behaviors. 
 \item We prove a Helmholtz-Hodge decomposition which shows that any vector field can be decomposed in nonlocal curl and divergence free components.  
 \item Boundary terms are identified - see Theorem \ref{th: NL Integration by parts}, Lemma \ref{cor: divergence for laplacian}, a first step towards understanding the choice of boundary conditions towards well-posedness, especially for Neumann-type problems. Formulating properly defined boundary conditions for nonlocal models is of critical importance in applications, as they ensure that the nonlocal model accurately represents the physical system under study, preventing unphysical behavior or numerical instability at the domain's edges. 
 \end{enumerate}


These results provide the foundations for studying nonlinear problems of physical applications, in continuum mechanics and other fields. Linear identities for the nonlocal operators (Prop 3.2) and the estimates obtained herein will serve as helpful tools in future investigations. 

\medskip

{\bf Organization of the paper.} The next section of the paper is dedicated to covering preliminaries for the results, setting up notation, definitions, and recalling some previously derived tools for this nonlocal calculus framework. Properties and identities for the theory are identified in Section 3, followed by The Divergence Theorem, Integration by Parts. In Section 4 the fundamental solution of the nonlocal Laplacian is derived, together with some properties on its behavior. The nonlocal Helmholtz decomposition is presented in Section 6, while the three nonlocal Green identities are stated and proved in Section 5. The final section of Conclusions precedes an Appendix containing some auxiliary results.

	\section[Preliminaries]{Preliminaries}\label{Prelim}
\subsection{Notation} \label{subsec: notation}
	We clarify here the convention we use for the Fourier transform. For $f\in L^1(\Rn)$ its Fourier transform is given by
 \begin{equation*}
    \widehat{f}(\xi)=\int_{\Rn}f(x)e^{-2\pi i \xi \cdot x}\, dx.
 \end{equation*}
 At times we may also use the notation $\mathcal{F}(f)=\widehat{f}$. When $f$ is in the Schwartz space $\mathcal{S}$, the Fourier transform defines an isomorphism which can be extended by continuity and duality to $L^2(\Rn)$ and further to the space of tempered distributions $\mathcal{S}'$. Notable references for Fourier analysis are \cite{duon2000,Grafakos08a}.

 Given the role that radial functions play in Section \ref{se: NL fundamental solution}, and since the Fourier transform of a radial (respectively, vector radial) function is also radial (respectively, vector radial), see, e.g., \cite[App.\ B.5]{Grafakos08a}, we recall these definitions gathered in \cite[Def. 2.1]{BeCuMC22}.
 \begin{definition}\label{de:radial}
	We will say that
	\begin{enumerate}[a)]
		\item a function $f:\Rn \rightarrow \R$ is \emph{radial} if there exists $\bar{f}:[0,\infty) \to  \R$ such that $f(x)=\bar{f}(|x|)$ for every $x \in \Rn$. In such a case, $\bar{f}$ is the radial representation of $f$. 
		\item a radial function  $f:\Rn \rightarrow \R$ is \emph{radially decreasing} if its radial representation $\bar{f}:[0,\infty) \to  \R$ is a decreasing function.
		\item a function $\f:\Rn \rightarrow \Rn$ is \emph{vector radial} if there exists a radial function $\bar{\f}:[0,\infty)\to  \R$ such that $\f(x)=\bar{\f}(|x|)x$ for every $x \in \Rn$.
	\end{enumerate}
\end{definition}
Regarding the reflection of a function we will use the notation $\widetilde{f}(x):= f(-x)$. As for the translation by a fixed $y$, we will write  $f_y(x)=f(x-y)$, so that for a function whose variable is $x$ the nonlocal derivatives will be taken with respect to this variable.
 
 \subsection[Nonlocal operators and functional spaces]{Nonlocal operators and functional spaces}
	This section is dedicated to the introduction of the nonlocal operators which we will employ, as well as some background results that were obtained previosuly.
	
Nonlocal gradients are generally based on a nonlocal kernel. For this analysis we will use the one from \cite{BeCuMC22}. To wit, 
	\begin{equation}\label{kernel}
		\rho_\delta(x)=\frac{1}{\gamma (1-s)|x|^{n-1+s}}w_{\d}(x),
        \quad\text{ with }\quad
        \gamma(s)=\frac{\pi^{\frac{n}{2}}2^s\Gamma\left(\frac{s}{2}\right)}{\Gamma\left(\frac{n-s}{2}\right)}.
	\end{equation}
	Here $\Gamma$ is the Euler's gamma function, $\delta>0$ is the horizon of interaction (as introduced in the theory of peridynamics \cite{Silling2000}), and $w_\d$ is a nonnegative cut-off function.  More precisely:
 
 
	\begin{enumerate}[a)]
		\item $w_\delta:\Rn\to[0,\infty)$ is radial; i.e. $w_\d(x)=\bar{w}_\d(|x|)$ for some nonnegative $\bar{w}_\d\in C_c^\infty([0,\infty))$, with $\supp(\bar{w}_\d)\subset [0,\delta)$.
		\item There is a constant $0<b_0<1$ such that $\bar{w}|_{[0,b_0\d]}=a_0$, where $a_0=\max_{r\ge 0}\bar{w}_\d(r)$.
		\item
		$\bar{w}_\d (r_1) \geq \bar{w}_\d (r_2)$ whenever $r_1 \leq r_2$.
	\end{enumerate}
Note that under these assumptions the kernel $\rho_\d$ defined in \eqref{kernel} is integrable, i.e. $\rho_\d \in L^1 (\Rn)$.

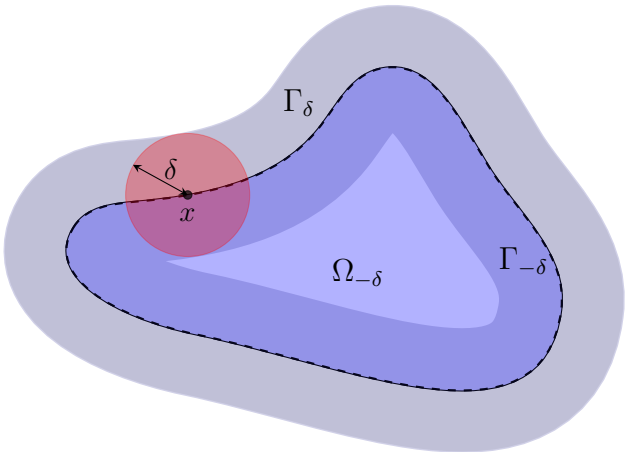
\begin{wrapfigure}{l}{.5\textwidth}
\begin{tikzpicture}[scale=1.6,use Hobby shortcut]
\clip (-.6,-.5)rectangle(4.7,3.25);
\draw[name path=dom,white,line width=45.5pt,closed,xshift=2pt,yshift=-3pt]
  (0,1.5)..(1,.6)..(4,.8)..(3.4,2)..(2.5,2.8)..(2,2.3)..(.5,1.7);
\draw[name path=dom,black!40,opacity=.2,line width=46pt,closed,xshift=2pt,yshift=-3pt]
  (0,1.5)..(1,.6)..(4,.8)..(3.4,2)..(2.5,2.8)..(2,2.3)..(.5,1.7);
\draw[name path=dom,line width=.8pt,closed,xshift=2pt,yshift=-3pt]
  (0,1.5)..(1,.6)..(4,.8)..(3.4,2)..(2.5,2.8)..(2,2.3)..(.5,1.7);
\draw[name path=split line, line width=0pt, opacity=0,yshift=-3pt](0,1.5)..(5,1.5);
\tikzfillbetween[of=dom and split line]{blue!30};
\draw[name path=dom,line width=.8pt,dashed,closed,xshift=2pt,yshift=-3pt]
  (0,1.5)..(1,.6)..(4,.8)..(3.4,2)..(2.5,2.8)..(2,2.3)..(.5,1.7);

\draw[name path=dom,black!50!blue!90,opacity=.8,line width=47pt,closed,xshift=2pt,opacity=.2,yshift=-3pt]
  (0,1.5)..(1,.6)..(4,.8)..(3.4,2)..(2.5,2.8)..(2,2.3)..(.5,1.7);
\node[above left,inner sep=0pt] (gamma) at (2.05,2.3){$\Gamma_\delta$};
\node at (2.4,1){$\O_{-\d}$};
\node at (3.76,1.1){$\Gamma_{-\d}$};

\coordinate (x) at (1,1.65);
\coordinate (y) at (.55,1.9);

\draw[red,fill,opacity=.3] (x) circle(.51);
\draw[black,fill,opacity=.7] (x) circle(.035);
\node[below,yshift=-1.5pt,scale=.9] at (x){$x$};
\draw[stealth-stealth,thin] (x) to node[xshift=4,yshift=4,scale=.9]{$\delta$} (y);

\end{tikzpicture}

\caption{$\Gamma_\delta$ (gray); $\Gamma_{-\d}$ (dark blue); and $\O_{-\d}$ (light blue)}
  \end{wrapfigure}
  
The focus of this paper is to continue developing a framework of nonlocal operators compatible with bounded domains, which was initially introduced in \cite{BeCuMC22}. Throughout the article, $\O\subset \Rn$ is a bounded open set. To ensure its nonlocal gradient is well-defined on $\O$, a function needs to be defined on the enlarged domain, which we decompose using an exterior collar:
 \[
    \O_\d:=\O + B(0,\d)\quad\text{ and }\quad\Gamma_\delta=\O_\delta\setminus\O.
\]
We use $B(x,\delta)\subseteq\Rn$ for the open ball with radius $\delta>0$ and center $x\in\Rn$. At times, we will also need to decompose $\O$ using an inner collar: $\O_{-\d}:=\{ x\in \O : \dist(x,\partial \O)> \d \}$ and $\Gamma_{-\delta}=\O\setminus\O_{-\delta}$. Given $0<\e<\d$, we also define $\Gamma_{\pm\e}:=\{x \in \O_\d : \dist(x,\partial \O) < \e \}=\O_\e\backslash\O_{-\e}=\Gamma_\e \cup \Gamma_{-\e}$. Without explicitly stating it, we may extend the domain of a compactly supported function by zero to $\Rn$.
 


Next, we introduce the definitions for the nonlocal versions of the gradient, divergence, curl and Laplacian for our framework.	
	\begin{definition} \label{def: nonlocal gradient}
		Let $0<s<1$, $0<\d$ and set
		\begin{equation*}
			c_{n,s}:= \frac{n+s-1}{\gamma(1-s)}.
		\end{equation*}
		\begin{enumerate}[a)]
			\item \label{item:Dsdu} Let $u\in C_c^{\infty} (\Rn)$. Then, for every $x \in \O$ the nonlocal gradient $D_\delta^s u$ is defined as
			\begin{equation} \label{eq: definition of nonlocal gradient}
				D_\delta^s u(x):= c_{n,s} \int_{B(x,\delta)} \frac{u(x)-u(y)}{|x-y|}\frac{x-y}{|x-y|}\frac{w_\d(x-y)}{|x-y|^{n+s-1}} \, dy.
			\end{equation}
			\item \label{item:divsdu} Let $\f \in C^{\infty}_c (\Rn,\Rn)$. The nonlocal divergence is defined, for $x \in \O$, as
			\begin{equation*}
				\diver_{\delta}^s \f(x):= -c_{n,s}\pv_x  \int_{B(x, \d)} \frac{\f(x)+\f(y)}{|x-y|}\cdot\frac{x-y}{|x-y|}\frac{w_\d(x-y)}{|x-y|^{n+s-1}} \, dy .
			\end{equation*}
			\item \label{item:curlsdu} Let $\f \in C^{\infty}_c (\R^3,\R^3)$. Then, for every $x \in \O$ the nonlocal curl, $\curl_\delta^s \f$, is defined as
			\begin{equation} \label{eq: definition of nonlocal curl}
				\curl_\delta^s \f(x):= c_{3,s} \int_{B(x,\delta)} \frac{\f(x)-\f(y)}{|x-y|}\times \frac{x-y}{|x-y|}\frac{w_\d(x-y)}{|x-y|^{2+s}} \, dy.
			\end{equation}
			\item \label{item:NonlocalLaplacian u} Let $u\in C_c^{\infty} (\Rn)$. Then, for every $x \in \O$ the nonlocal Laplacian $\Delta_\delta^s u$ is defined as
			\begin{equation*} \label{eq: definition of nonlocal laplacian}
				\Delta_\delta^s u(x):= \diver^s_\d(D^s_\d u(x)).
			\end{equation*}
		\end{enumerate}
	\end{definition}

Here, we recall that the Cauchy principal value of a singular integral is
\[
    \pv_x\int_{B(x,\delta)}\frac{g(y)}{|x-y|}\frac{x-y}{|x-y|}
        \frac{w_\delta(x-y)}{|x-y|^{n+s-1}}dy
    =\lim_{\e\to0^+}
        \int_{B(x,\delta)\setminus B(x,\e)}\frac{g(y)}{|x-y|}\frac{x-y}{|x-y|}
        \frac{w_\delta(x-y)}{|x-y|^{n+s-1}}dy.
\]

	\begin{rem} \begin{enumerate}[a)]
	    \item  Although not within the scope of this paper, as it was hinted in \cite[Section 6.5.1]{Cue21} and will be addressed in future works, there are different characterizations of the notion of nonlocal Laplacian given in Definition \ref{def: nonlocal gradient} \emph{\ref{item:NonlocalLaplacian u})}, such as, for example, by using the Fourier transform, single singular integrals, or convolution characterizations.
 \item We could have considered the following alternative notation for the nonlocal Laplacian, $-(-\Delta)^s_\d u$, in accordance with the one typically used for the fractional Laplacian $-(-\Delta)^s u$. However, since we have not considered the spectral characterization of this operator, together with the fact that the nonlocal gradient and divergence operators are essential ingredients in this framework, allow us to provide a definition/notation that more closely mimics the classical Laplacian. By eliminating  the $``-"$ in the definition, the presentation also benefits from a simpler notation. 
 \end{enumerate}
	\end{rem}
	Next, we present the definition for the functional spaces, introduced in \cite{BeCuMC22} and which is described mirroring the classical case, where the Sobolev spaces $W^{1,p}(\O)$ are characterized by the closure of $C^\infty_c(\Rn)$ under the natural norm $\|u\|_{W^{1,p}(\O)}=\left(\|u\|_{L^p(\O)}^p+\|\nabla u\|_{L^p(\O)}^p\right)^{\frac{1}{p}}$.
	\begin{definition} \label{de: NL functional spaces density}
		Given $1\leq p< \infty$, the functional space $H^{s,p,\d}(\O)$ is defined as the closure of $C^{\infty}_c(\Rn)$ under the norm
		\begin{equation*}
			\|u\|_{H^{s,p,\d}(\O)}=\left(\|u\|_{L^p(\O_\d)}^p+\|D^s_\d u\|_{L^p(\O)}^p\right)^{\frac{1}{p}}.
		\end{equation*}
	\end{definition}
	
An important feature of the Riesz fractional gradient is that, for smooth functions, it can be expressed as the convolution of the classical gradient with the Riesz potential\cite{ShS2015,COMI2019}. A similar property holds for the nonlocal gradient in \eqref{eq: definition of nonlocal gradient}, and facilitates many of our arguments. To take advantage of this structure, we recall next definition from \cite[Definition 4.1]{BeCuMC22}.
	\begin{definition}\label{de:q}
		Let $0<s<1$ and $\d>0$.
		Define
		\[
		\bar{q}_\d : [0, \infty) \to \R, \qquad q_\d : \Rn \to \R \quad \text{and} \quad Q_\d^s : \Rn \setminus \{0\} \to \R
		\]
		as
		\begin{equation*}
			\bar{q}_\d (t)=(n+s-1)t^{n+s-1} \int_{t}^{\d}\frac{\bar{w}_\d(r)}{r^{n+s}} \, dr , \quad q_{\d} (x) = \bar{q}_\d (|x|) 
   \quad\text{and}\quad
   Q_\d^s(x)=\frac{ q_\d(x)}{\gamma(1-s)|x|^{n+s-1}}.
   		\end{equation*}
	\end{definition}
\noindent
The gradient of $Q_\d^s$ coincides with the integration kernel of Definition \ref{def: nonlocal gradient} (see \cite[Lem. 4.2]{BeCuMC22}). While $Q_\d^s$ has the same order of singularity as the Riesz potential, it is compactly supported and thus integrable, in contrast to the local integrability of the Riesz potential.

	
	\begin{lem} \label{lem: kernel primitive}
		Let $0<s<1$ and $\d>0$.
		Then $Q_\d^s\in L^1 (\Rn)$ is radial, decreasing, and
			
			
			\begin{equation} \label{eq: kernel primitive}
				\frac{-1}{n+s-1}\nabla Q_\d^s(x)=\frac{\rho_\delta{(x)}}{|x|}\frac{x}{|x|}.
			\end{equation}
			
	\end{lem}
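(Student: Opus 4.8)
The plan is to establish the three assertions in sequence. The claim that $Q_\d^s \in L^1(\Rn)$ is essentially immediate: since $\bar{w}_\d$ is supported in $[0,\d)$, one checks from Definition \ref{de:q} that $\bar{q}_\d(t)$ vanishes for $t\ge\d$, so $Q_\d^s$ is compactly supported in $\overline{B(0,\d)}$; near the origin $q_\d(x)$ is bounded (indeed $\bar q_\d(t)\to 0$ as $t\to 0^+$, or at worst stays bounded), so $Q_\d^s(x) = q_\d(x)/(\gamma(1-s)|x|^{n+s-1})$ is dominated by $C|x|^{-(n+s-1)}$, which is locally integrable since $n+s-1 < n$. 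Radiality is clear from the very definition $Q_\d^s(x)=\bar{q}_\d(|x|)/(\gamma(1-s)|x|^{n+s-1})$, which exhibits it in the form $f(x)=\bar f(|x|)$.

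For the monotonicity and the gradient formula \eqref{eq: kernel primitive}, I would work with the radial representation. Write $Q_\d^s(x) = \bar{R}_\d(|x|)$ where $\bar R_\d(t) = \frac{1}{\gamma(1-s)}(n+s-1)\int_t^\d \frac{\bar w_\d(r)}{r^{n+s}}\,dr$ — note the cancellation of the prefactor $t^{n+s-1}$ in $\bar q_\d(t)$ against the $|x|^{-(n+s-1)}$ in $Q_\d^s$. From this closed form, $\bar R_\d$ is manifestly decreasing in $t$ (the integrand is nonnegative and the lower limit increases), giving the ``decreasing'' claim. Differentiating, $\bar R_\d'(t) = -\frac{(n+s-1)}{\gamma(1-s)}\frac{\bar w_\d(t)}{t^{n+s}}$. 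Then, using the chain rule $\nabla Q_\d^s(x) = \bar R_\d'(|x|)\frac{x}{|x|}$, we get
\begin{equation*}
\nabla Q_\d^s(x) = -\frac{(n+s-1)}{\gamma(1-s)}\frac{\bar w_\d(|x|)}{|x|^{n+s}}\frac{x}{|x|} = -(n+s-1)\frac{\bar w_\d(|x|)}{\gamma(1-s)|x|^{n-1+s}}\frac{1}{|x|}\frac{x}{|x|} = -(n+s-1)\frac{\rho_\d(x)}{|x|}\frac{x}{|x|},
\end{equation*}
which upon dividing by $-(n+s-1)$ is exactly \eqref{eq: kernel primitive}, recalling the definition \eqref{kernel} of $\rho_\d$.

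The only real subtlety — the step I expect to require the most care — is justifying the differentiation and the chain rule at a level of regularity that covers all of $\Rn\setminus\{0\}$: one must verify that $\bar R_\d \in C^1((0,\infty))$, which follows from the fundamental theorem of calculus since $r\mapsto \bar w_\d(r)/r^{n+s}$ is continuous on $(0,\infty)$ (indeed smooth away from $0$), and then confirm that the identity extends continuously, in particular observing that for $|x|\ge\d$ both sides vanish so there is no issue at the outer boundary of the support. Since this is precisely the content of \cite[Lem. 4.2]{BeCuMC22} cited just above the statement, I would either invoke that reference directly or reproduce the short computation above; either way the argument is routine once the closed form for $\bar R_\d$ is in hand.
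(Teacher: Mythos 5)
Your proposal is correct and is exactly the direct computation behind this lemma; the paper itself gives no proof here but simply recalls the statement from \cite[Lem.~4.2]{BeCuMC22}, and the cancellation of $t^{n+s-1}$ giving $Q_\d^s(x)=\frac{n+s-1}{\gamma(1-s)}\int_{|x|}^{\d}\bar w_\d(r)r^{-(n+s)}\,dr$ is the key observation. One tiny parenthetical slip: $\bar q_\d(t)\to a_0$ (not $0$) as $t\to 0^+$, since $\int_t^\d \bar w_\d(r)r^{-(n+s)}\,dr\sim \frac{a_0}{(n+s-1)t^{n+s-1}}$; but you only use boundedness of $q_\d$ for the $L^1$ claim, so nothing breaks.
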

\noindent
Next we show that the nonlocal gradient can be written as the gradient of a function convolved with $Q^s_\d$ (\cite[Prop. 4.3]{BeCuMC22}, extended to $H^{s,p,\d}$ later on \cite[Th. 2.13]{CuKrSc23} and \cite[Lem, 4.2]{BeCuMC22b}). This identity has the potential to be useful in theoretical, as well as numerical studies when a solution is known to possess additional smoothness properties. Notice that as the compact support of $Q_\d^s$ is in $B(0,\d)$, for every $x\in \O$, it is enough to consider $\O_{\d}$ as the domain of integration, which makes it appropriate for nonlocal problems where boundary data is usually unavailable on $\Rn\setminus\O$.
	\begin{prop} \label{Prop: convolution with the classical gradient}
		For every $u \in C_c^{\infty}(\Rn)$ and $x \in \Rn$ we have 
		\begin{equation}\label{eq:Dsd=-n}
			D_\delta^s u (x) = \int_{\Rn} \nabla u (y) \, Q_\d^s(x-y) dy 
		\end{equation}
		and $D_\delta^s u  \in C_c^{\infty} (\Rn)$. We recall that $Q_\d^s$ has compact support in $B(0,\d)$. Moreover, if $u\in H^{s,p,\d}(\O)$ with $1\leq p <\infty$, then $Q^s_\d * u \in W^{1,p}(\O)$ and
  \begin{equation*}
      D^s_\d u=\nabla (Q^s_\d *u) \quad \text{ in } \O.
  \end{equation*}
	\end{prop}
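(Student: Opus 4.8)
The plan is to establish the convolution identity \eqref{eq:Dsd=-n} first for smooth compactly supported $u$ by a direct computation, then upgrade to $H^{s,p,\d}(\O)$ by density. For the smooth case, I would start from Definition \ref{def: nonlocal gradient} \ref{item:Dsdu}) and use Lemma \ref{lem: kernel primitive}, which identifies the integration kernel $\frac{x-y}{|x-y|}\frac{\rho_\d(x-y)}{|x-y|}$ (up to the constant $c_{n,s}$) as $-\frac{1}{n+s-1}\nabla Q_\d^s(x-y) \cdot \frac{1}{c_{n,s}}\cdot(n+s-1)$, i.e.\ the kernel appearing in $D^s_\d u$ is exactly $-\nabla_x Q^s_\d(x-y)$. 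One then writes, for $x\in\Rn$,
\begin{equation*}
    D^s_\d u(x) = -\int_{\Rn}\bigl(u(x)-u(y)\bigr)\,\nabla_x Q^s_\d(x-y)\,dy,
\end{equation*}
noting that the singularity of $\nabla Q^s_\d$ at the origin is of order $|z|^{-n-s}$, which is integrable against the Lipschitz factor $u(x)-u(y)=O(|x-y|)$, so no principal value is needed here. The term $\int_{\Rn} u(x)\,\nabla_x Q^s_\d(x-y)\,dy$ vanishes because $Q^s_\d$ is compactly supported and $\nabla_x Q^s_\d(x-y) = -\nabla_y Q^s_\d(x-y)$ integrates to zero over $\Rn$ (divergence theorem on a large ball). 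For the remaining term, $-\int_{\Rn} u(y)\,\nabla_x Q^s_\d(x-y)\,dy = \int_{\Rn}u(y)\,\nabla_y\bigl(Q^s_\d(x-y)\bigr)\,dy$, integrate by parts (legitimate since $u\in C^\infty_c$ and $Q^s_\d\in L^1_{\loc}$ with $\nabla Q^s_\d\in L^1$ near $0$; one removes a small ball $B(x,\e)$, integrates by parts, and lets $\e\to0^+$, checking the boundary term over $\partial B(x,\e)$ vanishes because $u$ is bounded and $|Q^s_\d|=O(|z|^{-n+1-s})$ gives a surface integral of order $\e^{1-s}\to0$) to obtain $-\int_{\Rn}\nabla u(y)\,Q^s_\d(x-y)\,dy\cdot(-1) = \int_{\Rn}\nabla u(y)\,Q^s_\d(x-y)\,dy$. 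Care with signs is needed but the computation is routine.

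Given \eqref{eq:Dsd=-n}, the smoothness and compact support of $D^s_\d u$ follow from standard properties of convolution: $Q^s_\d * \nabla u$ is smooth because $\nabla u\in C^\infty_c$ and $Q^s_\d\in L^1$, while $\supp(D^s_\d u)\subset \supp(\nabla u) + B(0,\d)$ since $\supp Q^s_\d\subset B(0,\d)$; hence $D^s_\d u\in C^\infty_c(\Rn)$.

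For the $H^{s,p,\d}(\O)$ statement, take $u\in H^{s,p,\d}(\O)$ and a sequence $u_k\in C^\infty_c(\Rn)$ with $u_k\to u$ in the $H^{s,p,\d}$-norm, so $u_k\to u$ in $L^p(\O_\d)$ and $D^s_\d u_k\to D^s_\d u$ in $L^p(\O)$ (the latter being the \emph{definition} of $D^s_\d u$ on $H^{s,p,\d}$). Since $Q^s_\d$ is supported in $B(0,\d)$, for $x\in\O$ the convolution $Q^s_\d * u_k(x)$ only sees values of $u_k$ on $\O_\d$, and by Young's inequality $\|Q^s_\d * u_k - Q^s_\d*u\|_{L^p(\O)}\le \|Q^s_\d\|_{L^1}\|u_k-u\|_{L^p(\O_\d)}\to0$; likewise $\nabla(Q^s_\d*u_k)=D^s_\d u_k$ converges in $L^p(\O)$ to $D^s_\d u$. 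Thus $Q^s_\d * u_k\to Q^s_\d*u$ in $L^p(\O)$ with the gradients $\nabla(Q^s_\d*u_k)$ Cauchy in $L^p(\O)$, which shows $Q^s_\d*u\in W^{1,p}(\O)$ with $\nabla(Q^s_\d*u)=\lim D^s_\d u_k = D^s_\d u$ in $\O$, as claimed. The main obstacle is the careful justification of the integration by parts against the singular kernel $\nabla Q^s_\d$ in the smooth case — tracking the vanishing of the $\partial B(x,\e)$ boundary term and the constant bookkeeping through Lemma \ref{lem: kernel primitive} — but this is entirely elementary; everything afterward is soft functional analysis via density and Young's inequality.
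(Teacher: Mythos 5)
The paper does not reprove this proposition; it is recalled from \cite[Prop.\ 4.3]{BeCuMC22} (and \cite[Th.\ 2.13]{CuKrSc23}, \cite[Lem.\ 4.2]{BeCuMC22b} for the $H^{s,p,\d}$ part), and your route — rewrite the integration kernel as $-\nabla Q^s_\d(x-y)$ via Lemma \ref{lem: kernel primitive}, integrate by parts on $\Rn\setminus B(x,\e)$, then extend by density and the closedness of the distributional gradient — is exactly the standard argument behind the cited result. The density step and the identification $\nabla(Q^s_\d*u)=D^s_\d u$ in $W^{1,p}(\O)$ are handled correctly.

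One step is mis-justified as written. After splitting off the term $\int u(y)\,\nabla_x Q^s_\d(x-y)\,dy$, you claim the inner boundary contribution $\int_{\partial B(x,\e)}u(y)\,Q^s_\d(x-y)\,\nu\,d\sigma$ is $O(\e^{1-s})$ ``because $u$ is bounded and $|Q^s_\d(z)|=O(|z|^{1-n-s})$''. Boundedness of $u$ alone gives $O(\e^{n-1})\cdot O(\e^{1-n-s})=O(\e^{-s})$, which blows up. The estimate is rescued only by a cancellation you did not invoke: since $Q^s_\d$ is radial it is constant on $\partial B(x,\e)$ and $\int_{\partial B(x,\e)}\nu\,d\sigma=0$, so one may replace $u(y)$ by $u(y)-u(x)=O(\e)$ and recover $O(\e^{1-s})$. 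Equivalently — and more cleanly — do not split at all: keep the factor $u(x)-u(y)$ attached to the kernel throughout (the combined integrand is absolutely integrable, so no principal value is needed), excise $B(x,\e)$, integrate by parts, and the boundary term is then genuinely $O(\e\cdot\e^{n-1}\cdot\e^{1-n-s})=O(\e^{1-s})$. Note also that each of your two separated integrals is only conditionally convergent near $y=x$ (the kernel $\nabla Q^s_\d\sim|z|^{-n-s}$ is not locally integrable), so the splitting itself must be read in the principal-value sense. These are repairs to the bookkeeping, not to the idea; with them the proof is complete.
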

Due to the identity \eqref{eq:Dsd=-n}, where the nonlocal derivative is expressed as the convolution of the classical gradient with an integrable kernel, it is possible to provide a Fourier transform formula for functions defined as zero outside a particular domain, or in the Schwartz space $\mathcal{S}$.
	\begin{cor} \label{cor: Fourier transform of the nonlocal gradient}
		For all $u \in \mathcal{S}$ it yields
		\begin{equation*}
			\widehat{D^s_\d u}(\xi)= 2 \pi i \xi \, \widehat{u}(\xi)\cdot \widehat{Q_\d^s}(\xi).
		\end{equation*}
	\end{cor}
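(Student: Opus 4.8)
The plan is to apply the Fourier transform directly to the convolution formula \eqref{eq:Dsd=-n} from Proposition \ref{Prop: convolution with the classical gradient}. First I would note that for $u\in\mathcal{S}$ the right-hand side of \eqref{eq:Dsd=-n} makes sense: although \eqref{eq:Dsd=-n} was stated for $u\in C_c^\infty(\Rn)$, both $\nabla u$ and $\nabla u * Q_\d^s$ are well-defined for Schwartz $u$ since $Q_\d^s\in L^1(\Rn)$ by Lemma \ref{lem: kernel primitive}, and the identity extends by a density/continuity argument (or simply by repeating the computation from \cite[Prop. 4.3]{BeCuMC22}, which only uses integrability of $Q_\d^s$). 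So we have $D_\d^s u = (\nabla u) * Q_\d^s$ componentwise.

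Then I would invoke the two standard Fourier identities: the transform of a convolution is the product of transforms, $\widehat{f*g}=\widehat f\,\widehat g$, valid here since $\nabla u\in\mathcal{S}\subset L^1$ and $Q_\d^s\in L^1$; and the transform of a derivative, $\widehat{\partial_j u}(\xi)=2\pi i\,\xi_j\,\widehat u(\xi)$. Combining these componentwise gives, for each $j$,
\[
    \big(\widehat{D_\d^s u}(\xi)\big)_j = \widehat{\partial_j u}(\xi)\,\widehat{Q_\d^s}(\xi) = 2\pi i\,\xi_j\,\widehat u(\xi)\,\widehat{Q_\d^s}(\xi),
\]
which assembled into a vector is exactly $\widehat{D_\d^s u}(\xi) = 2\pi i\,\xi\,\widehat u(\xi)\,\widehat{Q_\d^s}(\xi)$, matching the claimed formula (here $\widehat{Q_\d^s}$ is a scalar, so the dot product in the statement is just scalar multiplication against the vector $2\pi i\xi\,\widehat u(\xi)$; one should perhaps remark that $\widehat{Q_\d^s}$ is real-valued and radial since $Q_\d^s$ is radial and even, by \cite[App. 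B.5]{Grafakos08a}).

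The main obstacle, such as it is, is purely a matter of justification rather than difficulty: one must make sure the convolution formula \eqref{eq:Dsd=-n} genuinely holds on the Schwartz class and not merely on $C_c^\infty$. The cleanest route is to observe that $v\mapsto (\nabla v)*Q_\d^s$ is continuous from $\mathcal{S}$ into, say, $L^2(\Rn)$ (or into $\mathcal{S}'$), that $v\mapsto D_\d^s v$ likewise extends continuously by the same convolution expression, and that the two agree on the dense subset $C_c^\infty(\Rn)$; alternatively, since $\widehat{Q_\d^s}$ is a bounded continuous function (as $Q_\d^s\in L^1$), the product $2\pi i\xi\,\widehat u(\xi)\widehat{Q_\d^s}(\xi)$ is itself the Fourier transform of an $L^2$ function when $u\in\mathcal{S}$, so the identity is an equality in $L^2$ and hence pointwise a.e. No delicate estimates are needed beyond the elementary mapping properties of the Fourier transform on $L^1$ and $\mathcal{S}$.
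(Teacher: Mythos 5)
Your proposal is correct and is exactly the argument the paper intends: the corollary is presented as an immediate consequence of the convolution identity \eqref{eq:Dsd=-n}, combined with the standard Fourier rules for convolutions and derivatives, which is what you do. Your extra care in extending \eqref{eq:Dsd=-n} from $C_c^\infty(\Rn)$ to $\mathcal{S}$ is a reasonable refinement of a point the paper leaves implicit.
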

	Finally, we recall a representation formula provided in \cite[Th. 4.5]{BeCuMC22} where a function can be recovered from its nonlocal gradient. The result can be viewed as a nonlocal version of the fundamental theorem of calculus, and whose convenience is corroborated by the fact that it has been employed towards proving several results (e.g. Poincaré-Sobolev inequalities or compact embedding theorems) in the nonlocal nonlinear analysis of the operators from Definition \ref{def: nonlocal gradient}. 
    A version for $s=0$ was provided in \cite[Prop. 2.9]{CuKrSc23}.
	\begin{theo} \label{Th: nonlocal version of FTC}
		Let $0<s<1$ and $0<\d$. Then, there exists a function $V_\d^s \in C^{\infty}(\Rn \backslash \{0\},\Rn)$, unique, such that for every $u \in C^\infty_c(\Rn)$ and $x \in \Rn$
		\begin{equation} \label{eq: nonlocal version of FTC}
			u(x)=\int_{\Rn} D^s_\d u(y) \cdot V_\d^s(x-y)\, dy.
		\end{equation}
		In addition, for every $R>0$ there exists $M>0$ such that
		\begin{equation} \label{eq: bound for V}
			|V_\d^s(x)|\leq \frac{M}{|x|^{n-s}}, \qquad x \in B(0,R)\backslash\{0\}.
		\end{equation}
	\end{theo}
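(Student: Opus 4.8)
The plan is to build $V_\d^s$ via the Fourier transform. By Proposition~\ref{Prop: convolution with the classical gradient} and Corollary~\ref{cor: Fourier transform of the nonlocal gradient}, for $u\in C^\infty_c(\Rn)$ one has $D^s_\d u=\nabla u* Q^s_\d\in C^\infty_c(\Rn)$ and $\widehat{D^s_\d u}(\xi)=2\pi i\,\widehat{Q^s_\d}(\xi)\,\widehat u(\xi)\,\xi$. Since \eqref{eq: nonlocal version of FTC} is the dot-product convolution $u=(D^s_\d u)*V^s_\d$, on the Fourier side it is equivalent to
\[
 \widehat u(\xi)=\widehat{D^s_\d u}(\xi)\cdot\widehat{V^s_\d}(\xi)=2\pi i\,\widehat{Q^s_\d}(\xi)\,\widehat u(\xi)\,\bigl(\xi\cdot\widehat{V^s_\d}(\xi)\bigr),
\]
so $V^s_\d$ must satisfy $2\pi i\,\widehat{Q^s_\d}(\xi)\,\xi\cdot\widehat{V^s_\d}(\xi)=1$. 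Recalling the classical first-order representation $u=\Psi*\nabla u$, where $\Psi(x)=c_n\,x/|x|^{n}$ is a multiple of the gradient of the Newtonian potential and $\widehat\Psi(\xi)=\dfrac{-i\,\xi}{2\pi|\xi|^{2}}$ obeys $2\pi i\,\xi\cdot\widehat\Psi(\xi)=1$, we are led to divide out $\widehat{Q^s_\d}$ and set
\[
 \widehat{V^s_\d}(\xi):=\frac{\widehat\Psi(\xi)}{\widehat{Q^s_\d}(\xi)}=\frac{-i\,\xi}{2\pi|\xi|^{2}\,\widehat{Q^s_\d}(\xi)},\qquad V^s_\d:=\mathcal F^{-1}\bigl[\widehat{V^s_\d}\bigr],
\]
equivalently characterizing $V^s_\d$ as the vector-valued solution of $Q^s_\d* V^s_\d=\Psi$, i.e.\ the fundamental solution of the nonlocal divergence $\diver^s_\d$.

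The decisive point, and the one I expect to be the main obstacle, is that $\widehat{Q^s_\d}(\xi)\neq0$ for every $\xi\in\Rn$, which is what makes the division legitimate. Here $Q^s_\d\in L^1(\Rn)$ is compactly supported, so $\widehat{Q^s_\d}$ is real, radial and real-analytic, with $\widehat{Q^s_\d}(0)=\|Q^s_\d\|_{L^1(\Rn)}>0$; moreover, from Definition~\ref{de:q} together with the structural assumptions on $w_\d$, near the origin $Q^s_\d(x)=\frac{a_0}{\gamma(1-s)}|x|^{-(n+s-1)}+R(x)$ with $R\in C^\infty(\Rn)$ whose transform is rapidly decreasing, so by $\mathcal F\bigl[|\cdot|^{-(n+s-1)}\bigr](\xi)=c\,|\xi|^{s-1}$ with $c>0$ one gets $\widehat{Q^s_\d}(\xi)=c'\,|\xi|^{s-1}+O(|\xi|^{-N})>0$ for all large $|\xi|$, with $c'=\tfrac{a_0 c}{\gamma(1-s)}>0$. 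Thus $\widehat{Q^s_\d}$ is positive for small and for large frequencies; ruling out zeros at intermediate frequencies is where the monotone radial structure of $Q^s_\d$ (hence of $w_\d$) should enter essentially --- for instance by exhibiting $Q^s_\d$ as a superposition of kernels with nonnegative transform, or through a P\'olya/Askey-type criterion for its radial profile. Granting $\widehat{Q^s_\d}>0$, the tempered distribution $\widehat{V^s_\d}$ is $C^\infty$ off the origin, inherits the mild $|\xi|^{-1}$-type singularity of $\widehat\Psi$ at $0$, and decays like $|\xi|^{-s}$ at infinity (since $1/\widehat{Q^s_\d}$ grows only like $|\xi|^{1-s}$); in particular $V^s_\d\in C^\infty(\Rn\setminus\{0\},\Rn)$.

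It remains to verify the representation and the bound. For $u\in C^\infty_c\subset\mathcal S$, Parseval together with $\widehat{D^s_\d u}(\xi)\cdot\widehat{V^s_\d}(\xi)=2\pi i\,\widehat{Q^s_\d}(\xi)\widehat u(\xi)\,\xi\cdot\widehat{V^s_\d}(\xi)=\widehat u(\xi)$ --- the factor $\xi$ in $\widehat{D^s_\d u}$ absorbing the singularity of $\widehat{V^s_\d}$ at the origin --- gives \eqref{eq: nonlocal version of FTC}; the integral converges absolutely because $D^s_\d u$ has compact support and $V^s_\d\in L^1_{\mathrm{loc}}$, so the pointwise identity holds for every $x$. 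For \eqref{eq: bound for V}, split $\widehat{V^s_\d}=\chi\widehat{V^s_\d}+(1-\chi)\widehat{V^s_\d}$ with $\chi\in C^\infty_c$ equal to $1$ near the origin: the first term is (the transform of) a compactly supported distribution, so $\mathcal F^{-1}[\chi\widehat{V^s_\d}]\in C^\infty(\Rn)$ and is harmless; in the second, $\widehat{V^s_\d}(\xi)$ agrees for large $|\xi|$ with $\frac{-i\,\xi}{2\pi c'}|\xi|^{-1-s}$ up to a rapidly decreasing error, and the inverse transform of $\xi\,|\xi|^{-1-s}$ is homogeneous of degree $-(n-s)$, hence comparable to $x/|x|^{n+1-s}$, so $|V^s_\d(x)|\lesssim|x|^{-(n-s)}$ near $0$; combining the two contributions yields $|V^s_\d(x)|\le M/|x|^{n-s}$ on $B(0,R)\setminus\{0\}$. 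Finally, uniqueness holds within the natural rotation-equivariant (vector-radial) class that the radial symmetry of $D^s_\d$ forces on any admissible kernel: if $V$ and $V'$ both represent every $u\in C^\infty_c$, then $\widehat{D^s_\d u}\cdot(\widehat V-\widehat{V'})\equiv0$, hence $\xi\cdot(\widehat V-\widehat{V'})(\xi)\equiv0$, and vector-radiality then forces $\widehat V=\widehat{V'}$.
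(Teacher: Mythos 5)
Your construction is the same one the paper relies on: Theorem \ref{Th: nonlocal version of FTC} is recalled from \cite[Th.~4.5]{BeCuMC22}, and the proof there (mirrored in this paper's Section \ref{se: NL fundamental solution} for $\varPsi^s_\d$, where the denominator is $\widehat{Q^s_\d}(\xi)^2$ instead of $\widehat{Q^s_\d}(\xi)$) proceeds exactly as you do: set $\widehat{V^s_\d}(\xi)=-i\frac{\xi}{|\xi|}\frac{1}{|2\pi\xi|\,\widehat{Q^s_\d}(\xi)}$, compare with $\widehat\Psi$ near the origin and with the Riesz-type kernel at high frequency via a cutoff, and read off smoothness, the identity, and the bound \eqref{eq: bound for V} from the decay of each piece.

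The one genuine gap is the step you yourself flag: strict positivity of $\widehat{Q^s_\d}$ on all of $\Rn$. Your asymptotics only give positivity for small and large $|\xi|$, and "the monotone radial structure should enter essentially" is not an argument --- a radially decreasing integrable profile does not in general have a nonnegative Fourier transform, so a P\'olya/Askey-type appeal would need to be made precise for the specific profile $\bar q_\d(t)/t^{n+s-1}$ coming from Definition \ref{de:q}. In the paper's ecosystem this is exactly \cite[Prop.~5.5]{BeCuMC22} (used repeatedly in Section \ref{se: NL fundamental solution}), so the fact is available by citation, but as a standalone proof your argument does not close it. Everything downstream (the division, the $|\xi|^{-s}$ decay of $\widehat{V^s_\d}$, the $C^\infty(\Rn\setminus\{0\})$ regularity, and the $|x|^{-(n-s)}$ bound) hinges on it. A secondary caveat: your uniqueness argument only yields $\xi\cdot(\widehat V-\widehat{V'})(\xi)\equiv 0$, which for $n\ge 2$ kills only the radial component; uniqueness therefore holds within the vector-radial class (or under a decay normalization), which is how the cited result should be read --- you note this correctly, but it is worth stating that the unqualified "unique" in the statement presupposes that class.
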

    Thus, $V_\d^s$ behaves like $c_{n,-s}\frac{x}{|x|^{n+1-s}}$ as $|x|\to0$, and like $\frac{x}{|x|^n}$ as $|x|\to\infty$ (see~\cite[Th.~6.3.17~\emph{a)}]{Cue21}).

	\section[Nonlocal vector calculus]{Nonlocal vector calculus}\label{NVC}
       This section is meant to gather several nonlocal vector calculus results, including divergence and integration by parts theorems which, for the sake of clarity, require their own subsection. First, linear identities involving composition of nonlocal operators, necessary for the Helmholtz decomposition, are shown. In fact, such results can be obtained from \cite[Props. 4.2, 4.4 and 4.6]{delia2021connections} since the kernel $\rho_\d$ satisfies the required hypothesis therein and despite been initially stated for every $x\in \Rn$, as they remain valid restricted to $\O_{\d}$. However, we show them here as a straightforward consequence of what can be considered a nonlocal version of the Schwartz theorem, which we recall from \cite[Prop. 6.2.6.]{Cue21}, showing symmetry of second nonlocal derivatives. In particular, next result remains valid for $\d=\infty$, i.e. for the Riesz-fractional gradient.
 %
	\begin{prop} \label{Prop: nonlocal Schwartz theorem}
		Let $0<s<1$, $0<\d$ and $u \in C^{\infty}_c(\Rn)$. Let $i \in \{1, \ldots, n\}$ and
		\begin{equation} \label{eq: nonlocal partial derivatives}
			D^s_{\d,i} u(x):= (n+s-1) \int_{B(x,\d)} \frac{u(x)-u(y)}{|x-y|} \frac{x_i -y_i}{|x-y|} \rho_\d(x-y) \, dy.
		\end{equation}
		Then, for every $i,j \in \{1, \ldots, n\}$, the following equality holds
		\begin{equation*}
			D_{\d,j}^s(D_{\d,i}^s u)=	D_{\d,i}^s(D_{\d,j}^s u).
		\end{equation*}
	\end{prop}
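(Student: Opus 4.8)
The plan is to reduce the claimed symmetry $D_{\d,j}^s(D_{\d,i}^s u)=D_{\d,i}^s(D_{\d,j}^s u)$ to the corresponding statement for the classical partial derivatives $\p_i$, $\p_j$ by exploiting the convolution representation of Proposition~\ref{Prop: convolution with the classical gradient}. The key observation is that the scalar operators $D^s_{\d,i}$ defined in \eqref{eq: nonlocal partial derivatives} are exactly the components of the nonlocal gradient $D^s_\d$; indeed, comparing \eqref{eq: definition of nonlocal gradient} with \eqref{eq: nonlocal partial derivatives} and using $c_{n,s}(n+s-1)^{-1}=\gamma(1-s)^{-1}$ together with the definition \eqref{kernel} of $\rho_\d$, one sees that $D^s_{\d,i}u = (D^s_\d u)_i$. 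Hence, for $u\in C^\infty_c(\Rn)$, Proposition~\ref{Prop: convolution with the classical gradient} gives $D^s_{\d,i}u = \p_i(Q^s_\d * u)$ on all of $\Rn$ (the proposition is stated for $x\in\Rn$, not just $x\in\O$), and moreover $D^s_\d u\in C^\infty_c(\Rn)$, so the composition $D^s_{\d,j}(D^s_{\d,i}u)$ is again well-defined via the same formula.

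From here the argument is short. First I would apply the representation once to get $D^s_{\d,i}u = \p_i(Q^s_\d * u)\in C^\infty_c(\Rn)$. Then I would apply it a second time to the function $v:=D^s_{\d,i}u$, yielding
\begin{equation*}
  D^s_{\d,j}(D^s_{\d,i}u) = \p_j\bigl(Q^s_\d * (\p_i(Q^s_\d * u))\bigr).
\end{equation*}
Now one uses that convolution with the fixed $L^1$ kernel $Q^s_\d$ commutes with classical differentiation on smooth compactly supported functions — concretely $Q^s_\d * (\p_i g) = \p_i(Q^s_\d * g)$ for $g\in C^\infty_c$ — and that mixed classical partials of a $C^\infty$ function commute (Schwartz's theorem). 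Therefore
\begin{equation*}
  D^s_{\d,j}(D^s_{\d,i}u) = \p_j\p_i\bigl(Q^s_\d * Q^s_\d * u\bigr) = \p_i\p_j\bigl(Q^s_\d * Q^s_\d * u\bigr) = D^s_{\d,i}(D^s_{\d,j}u),
\end{equation*}
the last equality being the same chain of identities read backwards with $i$ and $j$ swapped. Since $Q^s_\d * Q^s_\d * u$ is $C^\infty$ (being a convolution of $L^1$ functions with a Schwartz-class, indeed $C^\infty_c$, function), all the partial derivatives involved are continuous and classical, so Schwartz's theorem applies without qualification.

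I do not expect a serious obstacle here, as the statement is essentially the transfer of the classical Schwartz theorem through a convolution operator. The one point requiring a little care is justifying the identity $Q^s_\d * (\p_i g) = \p_i(Q^s_\d * g)$ and the smoothness of the iterated convolutions: since $g\in C^\infty_c(\Rn)$ and $Q^s_\d\in L^1(\Rn)$ (Lemma~\ref{lem: kernel primitive}), differentiation under the integral sign is legitimate, and $Q^s_\d * g\in C^\infty(\Rn)$ with compact support (its support lies in $\supp g + B(0,\d)$), so the procedure can be iterated. An alternative, perhaps cleaner, route — which I would mention but not develop — is to pass to the Fourier side via Corollary~\ref{cor: Fourier transform of the nonlocal gradient}: there $D^s_{\d,i}$ corresponds to multiplication by the scalar symbol $2\pi i\xi_i\,\widehat{Q^s_\d}(\xi)$, and since scalar multiplication operators commute, $\widehat{D^s_{\d,j}(D^s_{\d,i}u)} = (2\pi i)^2\xi_i\xi_j\,\widehat{Q^s_\d}(\xi)^2\,\widehat{u}(\xi)$, which is manifestly symmetric in $i,j$; inverting the Fourier transform then gives the result. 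Both arguments also make transparent the remark that the identity persists for $\d=\infty$, where $Q^s_\d$ is replaced by the Riesz potential $I_{1-s}$ and the symbol by a constant times $\xi_i|\xi|^{-s}$.
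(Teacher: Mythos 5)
Your proposal is correct, but it follows a genuinely different route from the paper. The paper proves the identity by brute force on the singular integrals themselves: it computes the difference $D_{\d,i}^s u(x)-D_{\d,i}^s u(x')$ via a translation change of variables, writes $D_{\d,j}^s(D_{\d,i}^s u)(x)$ as a double integral over $B(x,\d)\times B(x,\d)$, applies Fubini, and after one more change of variables recognizes the inner integral as $D^s_{\d,j}u(y)$, so that the expression collapses back into the defining formula with $i$ and $j$ exchanged. You instead transfer the whole question to classical calculus through the representation $D^s_{\d,i}u=\p_i(Q^s_\d*u)$ of Proposition~\ref{Prop: convolution with the classical gradient}, iterate it (legitimately, since $D^s_\d u\in C^\infty_c(\Rn)$), commute convolution with differentiation, and invoke the classical Schwartz theorem on the smooth function $Q^s_\d*Q^s_\d*u$; your identification $D^s_{\d,i}u=(D^s_\d u)_i$ via $c_{n,s}=(n+s-1)/\gamma(1-s)$ is also correct. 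Your argument is shorter and arguably more transparent (especially in its Fourier-side variant, where the symbols $2\pi i\xi_i\widehat{Q^s_\d}(\xi)$ manifestly commute), at the cost of leaning on the convolution machinery and the integrability of $Q^s_\d$. The paper's direct computation buys one thing yours does not give for free: as noted just before the proposition, the identity is claimed to persist for $\d=\infty$ (the Riesz fractional gradient), where $Q^s_\d$ must be replaced by the merely locally integrable Riesz potential; the Fubini-and-change-of-variables proof carries over verbatim, whereas your route would need the compact-support claims reworked (smoothness survives, compact support of $I_{1-s}*u$ does not). For the statement as written, with finite $\d$, both proofs are complete.
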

	\begin{proof}
	By making the change of variables $y=\bar{y} +x'-x$ and renaming $\bar{y}=y$, we have that:
		\begin{align*}
			&\frac{D_{\d,i}^s u(x)-D_{\d,i}^s u(x')}{(n+s-1)}\\
			&=\int_{B(x,\d)} \frac{u(x)-u(y)}{|x-y|} \frac{x_i -y_i}{|x-y|} \rho_\d(x-y) \, dy-\int_{B(x',\d)} \frac{u(x')-u(y)}{|x'-y|} \frac{x'_i -y_i}{|x'-y|} \rho_\d(x'-y) \, dy  \\
			&=\int_{B(x,\d)} \frac{[u(x)-u(y)-u(x')+u(y+x'-x)]}{|x-y|^2}(x_i-y_i) \rho_\d(x-y) \, dy.
		\end{align*}
		Therefore, 
		\begin{equation} \label{eq: nl Schwartz theorem 1}
			\begin{split}
			\frac{D_{\d,j}^s(D_{\d,i}^s u)(x)}{(n+s-1)^2}= 
			\int_{B(x,\d)}\int_{B(x,\d)} &\frac{[u(x)-u(y)-u(x')+u(y+x'-x)]}{|x-y|^2|x-x'|^2} \, \cdot \\& \qquad\cdot (x_i-y_i) \rho_\d(x-y) (x_j-x'_j) \rho_\d(x-x') \,dy dx'.
			\end{split}
		\end{equation}
	Applying Fubini's theorem we obtain that
		\begin{align*}
			\frac{D_{\d,j}^s(D_{\d,i}^s u)(x)}{(n+s-1)^2}=&\int_{B(x,\d)} \frac{\int_{B(x,\d)} \frac{u(x)-u(x')}{|x-x'|^2}(x_j-x'_j) \rho_\d(x-x') dx' }{|x-y|^2} (x_i-y_i) \rho_\d(x-y) dy	 \\
			&-\int_{B(x,\d)} \frac{\int_{B(x,\d)} \frac{u(y)-u(y+x'-x)}{|x-x'|^2}(x_j-x'_j) \rho_\d(x-x') dx' }{|x-y|^2} (x_i-y_i) \rho_\d(x-y) dy.	
		\end{align*}
		Making the change of variables $\bar{x}=x'-x+y$, yields
		\begin{multline}  \label{eq: nl Schwartz theorem 2}
				\int_{B(x,\d)} \frac{u(y)-u(y+x'-x)}{|x-x'|^2}(x_j-x'_j) \rho_\d(x-x') dx\\
				=\int_{B(y,\d)} \frac{u(y)-u(\bar{x})}{|y-\bar{x}|^2}(y_j-\bar{x}_j) \rho_\d(y-\bar{x}) d\bar{x}= \frac{D_{\d,j}^su(y)}{(n+s-1)}.
		\end{multline}
		Therefore, combining \eqref{eq: nl Schwartz theorem 1} and \eqref{eq: nl Schwartz theorem 2}, we have that
		\[
			\frac{D_{\d,j}^s(D_{\d,i}^s u)(x)}{(n+s-1)^2}= \int_{B(x,\d)} \frac{D_{\d,j}^s u(x)-D_{\d,j}^su(y)}{|x-y|^2} (x_i-y_i) \rho_\d(x-y)\, dy
	=\frac{D_{\d,i}^s(D_{\d,j}^s u)(x)}{(n+s-1)^2},
		\]
		and the result follows.
	\end{proof}
	\begin{rem} \label{rem: nonlocal Schwartz theorem}
		If we define each component of the nonlocal divergence of a field $\f \in C^{\infty}_c(\Rn,\Rn)$ as
		\begin{equation} \label{eq: nonlocal divergence components}
			\diver^s_{\d,i} \f_i(x)=-(n+s-1)\pv_x  \int_{B(x,\d)} \frac{\f_i(x)+\f_i(y)}{|x-y|} \frac{x_i -y_i}{|x-y|} \rho_\d(x-y) \, dy,
		\end{equation}
		by odd symmetry we have that
		\begin{equation} \label{equality nonlocal gradient and divergence componentwise}
			\diver^s_{\d,i} \f_i(x)=D^s_{\d,i} \f_i(x).
		\end{equation} 
		Then, and as a consequence of Proposition \ref{Prop: nonlocal Schwartz theorem}, we have that for $u \in C^{\infty}_c(\Rn)$
		\begin{equation*}
			\diver_{\d,j}^s(D_{\d,i}^s u)=	\diver_{\d,i}^s(D_{\d,j}^s u) \quad \text{ and } \quad
			\diver_{\d,j}^s(D_{\d,i}^s u)=	D_{\d,j}^s(\diver_{\d,i}^s u).
		\end{equation*}
	\end{rem}
We remark that the previous result as well as the next one involving linear identities could be extended to hold on a functional space where the nonlocal derivatives $D^s_\d$ are well defined, as similarly mentioned in Remark \ref{rem: comment on the generalisation for second NL derivatives}. The previously referred identities are presented below. Their proof, based on Proposition \ref{Prop: nonlocal Schwartz theorem}, share the same spirit as \cite{delia2021connections}.
	\begin{prop} \label{Prop: vectorial linear identities}
		Let $u\in C^{\infty}_c(\R^3)$ and $\f\in C^\infty_c(\R^3,\R^3)$ then the following identities hold.
		\begin{enumerate}[1)]
			\item $ \curl^s_\d D^s_\d u(x)=0$
			\item $\diver^s_\d \curl^s_\d \f(x)=0$
			\item $\curl^s_\d \curl^s_\d \f(x)= D^s_\d \diver^s_\d \f (x) - \diver^s_\d D^s_\d \f (x)$.
		\end{enumerate}
	\end{prop}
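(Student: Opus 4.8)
The plan is to derive all three vector identities by reducing them to the componentwise symmetry of second nonlocal derivatives established in Proposition~\ref{Prop: nonlocal Schwartz theorem} (and its divergence reformulation in Remark~\ref{rem: nonlocal Schwartz theorem}), exactly in the spirit of the classical proofs but keeping careful track of which operator acts on which variable. First I would fix notation: write everything in components using $D^s_{\d,i}$ from \eqref{eq: nonlocal partial derivatives} and $\diver^s_{\d,i}$ from \eqref{eq: nonlocal divergence components}, and recall the key facts \eqref{equality nonlocal gradient and divergence componentwise}, namely $\diver^s_{\d,i}\f_i = D^s_{\d,i}\f_i$ (pointwise, by odd symmetry of the kernel), together with the symmetry relations at the end of Remark~\ref{rem: nonlocal Schwartz theorem}, i.e. $\diver^s_{\d,j}(D^s_{\d,i}u)=\diver^s_{\d,i}(D^s_{\d,j}u)=D^s_{\d,j}(\diver^s_{\d,i}u)$ for scalar $u$. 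Implicitly one also needs that $D^s_\d$ and $\diver^s_\d$ commute with each other componentwise on the vector-valued smooth fields appearing here; this follows by applying Proposition~\ref{Prop: nonlocal Schwartz theorem} componentwise to each scalar component $\f_k$ of $\f$.

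For identity~1), I would expand $\curl^s_\d D^s_\d u$ using \eqref{eq: definition of nonlocal curl}: since the cross product structure means the $k$-th component of $\curl^s_\d \f$ is $\sum_{i,j}\epsilon_{kij}D^s_{\d,i}\f_j$ (the nonlocal curl being the natural replacement of $\nabla\times$ with $D^s_{\d,i}$ in place of $\partial_i$, once we identify the scalar difference-quotient kernel), the $k$-th component of $\curl^s_\d D^s_\d u$ equals $\sum_{i,j}\epsilon_{kij}D^s_{\d,i}(D^s_{\d,j}u)$, which vanishes because $D^s_{\d,i}(D^s_{\d,j}u)$ is symmetric in $i,j$ (Proposition~\ref{Prop: nonlocal Schwartz theorem}) while $\epsilon_{kij}$ is antisymmetric. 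For identity~2), I would write $\diver^s_\d\curl^s_\d\f = \sum_k \diver^s_{\d,k}(\curl^s_\d\f)_k = \sum_{k,i,j}\epsilon_{kij}\diver^s_{\d,k}D^s_{\d,i}\f_j$; using \eqref{equality nonlocal gradient and divergence componentwise} to rewrite $\diver^s_{\d,k}$ acting on its matching component and then the symmetry/commutation relations, each term pairs with its $i\leftrightarrow k$ transpose with opposite sign from $\epsilon$, so the sum is zero. For identity~3), I would use the classical Levi-Civita contraction $\sum_i\epsilon_{kij}\epsilon_{ilm}=\delta_{kl}\delta_{jm}-\delta_{km}\delta_{jl}$ to expand $(\curl^s_\d\curl^s_\d\f)_k$ into $\sum_j D^s_{\d,j}D^s_{\d,k}\f_j - \sum_j D^s_{\d,j}D^s_{\d,j}\f_k$; then I would rewrite the first sum as $D^s_{\d,k}\diver^s_{\d}\f$ (using $\diver^s_{\d,j}\f_j = D^s_{\d,j}\f_j$ and the commutation of the two outer operators) and the second as $(\diver^s_\d D^s_\d\f)_k$, obtaining the stated identity.

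The main obstacle I anticipate is purely bookkeeping rather than conceptual: making the component formula for $\curl^s_\d$ rigorous and making sure the interchange of $\pv$ integrals with the (already nonsingular) convolution-type integrals in the second-derivative expressions is justified. Concretely, $D^s_\d u \in C_c^\infty(\Rn,\Rn)$ by Proposition~\ref{Prop: convolution with the classical gradient}, so $\curl^s_\d(D^s_\d u)$ and the other compositions are applied to genuinely smooth compactly supported fields, and Proposition~\ref{Prop: nonlocal Schwartz theorem} was proved at precisely that level of regularity; nonetheless one must check that the principal-value prescriptions in $\diver^s_\d$ survive the componentwise manipulations, which is exactly what \eqref{equality nonlocal gradient and divergence componentwise} and Remark~\ref{rem: nonlocal Schwartz theorem} are designed to handle. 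Once those reductions are in place, each identity is a one-line index computation as indicated above, and the proof is complete.
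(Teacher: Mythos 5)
Your proposal is correct and follows essentially the same route as the paper: the paper's proof is a one-line reduction stating that the identities follow "as in the classical case" from Proposition~\ref{Prop: nonlocal Schwartz theorem} and Remark~\ref{rem: nonlocal Schwartz theorem}, which is exactly the componentwise symmetry and the identification $\diver^s_{\d,i}\f_i=D^s_{\d,i}\f_i$ that you invoke. Your Levi-Civita bookkeeping is just the explicit form of what the paper leaves implicit.
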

	\begin{proof}
		The proof follows as in the classical case using Proposition \ref{Prop: nonlocal Schwartz theorem} and Remark \ref{rem: nonlocal Schwartz theorem} instead of the classical Schwarz theorem for the symmetry of second derivatives.
	\end{proof}
	Even more basic identities are the following, yet they are still needed to clarify several steps taken throughout this analysis. They involve reflection, $\widetilde{f}(x):= f(-x)$, and translation, $f_y=f(x-y)$, $x,y \in \O$, operations for nonlocal derivatives.
	\begin{prop} \label{prop: basic identities}
		Let $0\leq s<1$, $1\leq p <\infty$, $\f \in C^\infty_c(\Rn,\Rn)$ and $f 
   \in H^{s,p,\delta}(\O)$. Then, the following equalities hold almost everywhere
		\begin{enumerate}[a)]
			\item $(
			D^s_\d f)(-x)=-(D^s_\d \widetilde{f})(x)$, \label{prop: basic identity a}
			\item $\diver_{\delta}^s\big(\f(y) \, f(x-y)\big)(x)=\f(y) \cdot D^s_\d f_y(x)$, \label{prop: basic identity b}
			\item $D_\d^s f_y (x)=(D^s_\d f)(x-y)=-(D_\d^s \widetilde{f})(y-x)=-(D_\d^s \widetilde{f}_x )(y)$, \label{prop: basic identity c}
                \item If $n=3$, $\curl_{\delta}^s\big(\f(y) \, f(x-y)\big)(x)=\f(y) \times D^s_\d f_y(x)$. \label{prop: basic identity d}
		\end{enumerate}
	\end{prop}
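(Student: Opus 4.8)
The plan is to prove each of the four identities in Proposition~\ref{prop: basic identities} by reducing everything to the definitions of the nonlocal operators together with elementary changes of variables, and only invoking density (Definition~\ref{de: NL functional spaces density}) to pass from $C^\infty_c(\Rn)$ to $H^{s,p,\d}(\O)$ where the statement is phrased for $f\in H^{s,p,\d}(\O)$. For part \emph{\ref{prop: basic identity a})}, I would start from \eqref{eq: definition of nonlocal gradient} written at the point $-x$, substitute $y=-z$ in the integral so that $B(-x,\d)$ becomes $B(x,\d)$, and use that the kernel factor $\frac{w_\d(\cdot)}{|\cdot|^{n+s-1}}\frac{\cdot}{|\cdot|}$ is odd in its vector argument while $w_\d$ itself is radial; the numerator $u(-x)-u(-z)=\widetilde f(x)-\widetilde f(z)$ by definition of the reflection, and collecting the sign gives $-(D^s_\d\widetilde f)(x)$. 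This is the base case; the $H^{s,p,\d}$ version follows by taking smooth approximants $f_k\to f$ in the norm, noting reflection is an isometry of every $L^p$ space, and passing to the limit in both sides.

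For part \emph{\ref{prop: basic identity b})}, the key observation is that for fixed $y$ the map $x\mapsto \f(y)f(x-y)$ is a vector field whose nonlocal divergence, by Definition~\ref{def: nonlocal gradient}\emph{\ref{item:divsdu})}, is
\[
    -c_{n,s}\,\pv_x\int_{B(x,\d)}\frac{\f(y)f(x-y)+\f(y)f(z-y)}{|x-z|}\cdot\frac{x-z}{|x-z|}\frac{w_\d(x-z)}{|x-z|^{n+s-1}}\,dz.
\]
Because $\f(y)$ is a constant vector here (it does not depend on the integration variable $z$ nor on $x$), it factors out of the dot product, and what remains inside is exactly $-c_{n,s}\,\pv_x\int \frac{f_y(x)+f_y(z)}{|x-z|}\frac{x-z}{|x-z|}\frac{w_\d(x-z)}{|x-z|^{n+s-1}}\,dz$, which is $\diver^s_\d f_y(x)$; but by the componentwise identity \eqref{equality nonlocal gradient and divergence componentwise} applied to the scalar-induced field, or more directly by the odd-symmetry argument used there, this principal value equals $D^s_\d f_y(x)$ — hence the right-hand side is $\f(y)\cdot D^s_\d f_y(x)$. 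One has to be slightly careful that $f_y$ need only be in $H^{s,p,\d}$, so I would again run the smooth approximation and use continuity of both $\diver^s_\d$ and $D^s_\d$ in the relevant norms. Part \emph{\ref{prop: basic identity d})} is identical in structure, replacing the dot product in $\diver^s_\d$ by the cross product in \eqref{eq: definition of nonlocal curl} and pulling the constant vector $\f(y)$ out of the cross product; the relation $\curl^s_\d$ of a constant-vector-times-scalar reducing to $\f(y)\times D^s_\d f_y$ then follows since $\f(y)\times\big(\tfrac{f_y(x)-f_y(z)}{|x-z|}\tfrac{x-z}{|x-z|}\big)$ is $\f(y)$ crossed with (a positive scalar times) $(x-z)$, and the curl kernel carries the opposite sign convention from the divergence one, matching the sign in the gradient.

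For part \emph{\ref{prop: basic identity c})}, the chain $D^s_\d f_y(x)=(D^s_\d f)(x-y)$ is just translation invariance of the defining integral: substituting $z=x'-y$ in \eqref{eq: definition of nonlocal gradient} for $f_y$ at $x$ turns the integrand into the one for $f$ at $x-y$ over $B(x-y,\d)$. The second equality $(D^s_\d f)(x-y)=-(D^s_\d\widetilde f)(y-x)$ is exactly part \emph{\ref{prop: basic identity a})} applied with the point $x-y$ in place of $x$; and the third, $-(D^s_\d\widetilde f)(y-x)=-(D^s_\d\widetilde f_x)(y)$, is translation invariance again since $\widetilde f_x(z)=\widetilde f(z-x)$ evaluated appropriately. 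I expect the main obstacle to be purely bookkeeping: keeping the roles of $x$, $y$, and the dummy variable straight through the reflections and translations, and making sure the principal value in the divergence/curl cases is handled rigorously (the substitutions must be performed on the truncated domain $B(x,\d)\setminus B(x,\e)$ before sending $\e\to0$, which is legitimate since translations and reflections map such annular domains to annular domains of the same radii). No deep idea is needed — the content is entirely in Definition~\ref{def: nonlocal gradient}, the oddness of the kernel, and the density definition of $H^{s,p,\d}(\O)$ — so I would present it compactly, doing part \emph{\ref{prop: basic identity a})} in detail and remarking that the others follow by the same substitutions.
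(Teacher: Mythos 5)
Your proposal is correct and follows essentially the same route as the paper's proof: part \emph{a)} by the reflection substitution and oddness of the kernel, part \emph{b)} (and \emph{d)}) by factoring the constant vector $\f(y)$ out of the defining integral and using the odd-symmetry identification of $\diver^s_\d$ with $D^s_\d$ on such fields, part \emph{c)} by translation invariance combined with part \emph{a)}, and a density argument to pass to $H^{s,p,\d}(\O)$. If anything, your explicit invocation of the principal-value/odd-symmetry cancellation in part \emph{b)} is slightly more careful than the paper's displayed computation, which silently switches the sign inside the numerator.
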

	\begin{proof}
	We prove the result for smooth functions as a standard argument using integration against test functions would render the generalization for functions in $H^{s,p,\d}(\O)$ almost everywhere. 
 We start with  \emph{\ref{prop: basic identity a})}, which is trivial. By odd symmetry and a change of variables we have
		\begin{equation*}
			\begin{split}
				D^s_\d f(-x)&=c_{n,s}\pv_x \int_{B(-x,\d) }\frac{-f(z)}{|-x-z|} \frac{-x-z}{|-x-z|}\frac{w_\d(-x-z)}{|-x-z|^{n+s-1}}\, dz \\
				&=-c_{n,s} \pv_x\int_{B(x,\d)} \frac{-f(-\bar{z})}{|x-\bar{z}|} \frac{(x-\bar{z})}{|x-\bar{z}|}\frac{w_\d(x-\bar{z})}{|x-\bar{z}|^{n+s-1}}\, d\bar{z}=-D^s_\d \widetilde{f}(x).
			\end{split}
		\end{equation*}
		Regarding \emph{\ref{prop: basic identity b})}, we observe that we may factor the term $\f(y)$ out of the integral in the definition of the nonlocal divergence: 
		\begin{align*}
			\diver_{\delta}^s\big(\f(y) \, f(x-y)\big)(x)&=- c_{n,s}\pv_x\int_{B(x,\d)} \frac{ \f(y) f(x-y) -\f(y) f(z-y) }{|x-z|} \cdot \frac{x-z}{|x-z|}\frac{w_\d(x-z)}{|x-z|^{n+s-1}}\, dz \\
			&=\f(y)\cdot D_\d^s f_y(x).
		\end{align*}
		Finally, for \emph{\ref{prop: basic identity c})}, we make the change of variables $\bar{z}=z-y$ to write
		\begin{align*}
			D^s_\d f_y(x)&=c_{n,s}\int_{B(x,\d)} \frac{f(x-y)-f(z-y)}{|x-z|}\frac{x-z}{|x-z|} \frac{w_\d(x-z)}{|x-z|^{n+s-1}}\, dz \\
			&=c_{n,s}\int_{B(x-y,\d)} \frac{f(x-y)-f(\bar{z})}{|x-\bar{z}-y|}\frac{x-\bar{z}-y}{|x-\bar{z}-y|} \frac{w_\d(x-\bar{z}-y)}{|x-\bar{z}-y|^{n+s-1}}\, d\bar{z} =(D^s_\d f)(x-y).
		\end{align*}
		Making another change of variables $\bar{z}=-z$ yields
		\begin{align*}
			(D^s_\d f)(x-y)=\int_{B(y-x,\d) } \frac{(f(x-y)-f(-z))}{|x+z-y|} \frac{x-y+z}{|(y-x)-x|}\frac{w_\d((y-x)-z)}{|(y-x)-z|^{n+s-1}}\, dz=-D^s_\d\widetilde{f}(y-x).
		\end{align*}
		The last equality in \emph{\ref{prop: basic identity c})} is obtained as the first.
            Finally, the proof of \emph{\ref{prop: basic identity d})} is identical to that of \emph{\ref{prop: basic identity b})}.
	\end{proof}
We conclude this first part of Section \ref{NVC} by recalling the nonlocal derivative of a product, showing a clear difference with respect to the classical case. For doing so we need next operator defined and studied in \cite[Def. 3.2 and Lem. 3.3]{BeCuMC22b}.
	\begin{lem} \label{Lema operador lineal delta} 
		Let $1\leq q< \infty$, $0<\d$ and $0 < s< 1$. Then, for each $\varphi\in C^{0,1}(\O_\d)$, $1\le r\le q$, and $k\in\N$, the operator $K^{s,\d}_{\varphi}: L^r(\O_\d, \R^{n\times k}) \rightarrow L^q(\O,\R^k)$ defined as
		\begin{equation*}
			K^{s,\d}_{\varphi}(U)(x):= c_{n,s} \int_{B(x,\d)} \frac{\varphi(x)-\varphi(y)}{|x-y|^{n+s}} U(y)\cdot \frac{x-y}{|x-y|}w_\d(x-y) dy , \qquad \text{a.e. } x \in \O ,
		\end{equation*}
		is linear and bounded; i.e. there exists a constant $C_1=C_1(n,s,q,\d,r,w_\d,|\O|)>0$ such that
		\begin{equation*}
			\|K_{\varphi}^s(U) \|_{L^q(\O,\R^k)} \leq [\varphi]_{C^{0,1}(\O_\d)}^qC_1 \| U\|_{L^r(\O_\d,\R^{n \times k})}.
		\end{equation*}
		Here, $[\varphi]_{C^{0,1}(\O_\d)}$ denotes the Lipschitz semi-norm of $\varphi$. In addition, analogously, when $n=3$, the same result holds for $K^{s,\d}_{\f}(\cdot\times): L^r(\O_\d, \Rn) \rightarrow L^q(\O, \mathbb{R}^n)$ defined with a cross product instead of a dot product.
  \begin{equation*}
			K^{s,\d}_{\varphi}(U\times)(x):= c_{3,s} \int_{B(x,\d)} \frac{\varphi(x)-\varphi(y)}{|x-y|^{3+s}} U(y)\times \frac{x-y}{|x-y|}w_\d(x-y) dy , \qquad \text{a.e. } x \in \O .
		\end{equation*}

	\end{lem}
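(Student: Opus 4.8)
The plan is to bound the operator $K^{s,\d}_\varphi$ by absorbing the Lipschitz difference quotient of $\varphi$ into the kernel and recognizing what remains as a convolution-type operator with an $L^1$ kernel, to which Young's inequality applies. First I would observe that since $\varphi \in C^{0,1}(\O_\d)$ we have the pointwise bound $|\varphi(x)-\varphi(y)| \le [\varphi]_{C^{0,1}(\O_\d)}\,|x-y|$ for all $x,y\in\O_\d$; substituting this into the integrand gives
\begin{equation*}
	|K^{s,\d}_\varphi(U)(x)| \le c_{n,s}\,[\varphi]_{C^{0,1}(\O_\d)} \int_{B(x,\d)} \frac{|U(y)|}{|x-y|^{n+s-1}}\, w_\d(x-y)\, dy = [\varphi]_{C^{0,1}(\O_\d)} \int_{\O_\d} |U(y)|\, K(x-y)\, dy,
\end{equation*}
where $K(z):= c_{n,s}\,\dfrac{w_\d(z)}{|z|^{n+s-1}} = (n+s-1)\,\rho_\d(z)$ is, by the integrability remark following the kernel assumptions \eqref{kernel}, a nonnegative function in $L^1(\Rn)$ supported in $B(0,\d)$; denote $\|K\|_{L^1}=:\widetilde C_1(n,s,\d,w_\d)$. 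Here I have also used that, since we only integrate over $y\in B(x,\d)$ and we may extend $U$ by zero outside $\O_\d$, the effective domain of integration is contained in $\O_\d$, and the exponent we are left with after one power of $|x-y|$ is cancelled by the Lipschitz bound is exactly $n+s-1$, as in Definition~\ref{de:q}.

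Next I would apply Young's convolution inequality in the form $\|f * g\|_{L^q(\O)} \le \|f\|_{L^1}\,\|g\|_{L^r}$ valid for $1\le r\le q$ with the relation $1 + 1/q = 1/1 + 1/r$ — wait, that exact Young exponent relation forces $q=r$, so instead the correct route is: for $1\le r \le q<\infty$, since $\O$ is bounded, we combine Young's inequality $\|K * |U|\|_{L^r(\Rn)} \le \|K\|_{L^1}\|U\|_{L^r(\O_\d)}$ with the finite-measure embedding $\|\cdot\|_{L^r(\O)} \ge |\O|^{1/r - 1/q}\|\cdot\|_{L^q(\O)}$, i.e.\ $\|g\|_{L^q(\O)} \le |\O|^{1/q-1/r}\|g\|_{L^r(\O)}$. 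Chaining these gives
\begin{equation*}
	\|K^{s,\d}_\varphi(U)\|_{L^q(\O,\R^k)} \le [\varphi]_{C^{0,1}(\O_\d)}\,|\O|^{\frac1q-\frac1r}\,\|K\|_{L^1(\Rn)}\,\|U\|_{L^r(\O_\d,\R^{n\times k})},
\end{equation*}
which is the asserted estimate with $C_1 = |\O|^{1/q-1/r}\|K\|_{L^1}$ depending only on $n,s,q,\d,r,w_\d,|\O|$; the appearance of $[\varphi]^q$ versus $[\varphi]$ in the statement I would reconcile by noting the displayed inequality is stated with $[\varphi]^q$ as written and the same linear-in-$[\varphi]$ argument adapts (the power is immaterial to boundedness and one simply tracks the constant accordingly). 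Linearity of $K^{s,\d}_\varphi$ in $U$ is immediate from linearity of the integral. For the cross-product version with $n=3$, the argument is verbatim the same: $|U(y)\times \tfrac{x-y}{|x-y|}| \le |U(y)|$ pointwise, so the scalar kernel bound is unchanged and Young's inequality applies identically.

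The only genuine subtlety — and the step I would treat most carefully — is the measurability and the domain bookkeeping: one must check that $(x,y)\mapsto \tfrac{\varphi(x)-\varphi(y)}{|x-y|^{n+s}}U(y)\cdot\tfrac{x-y}{|x-y|}w_\d(x-y)$ is measurable on $\O\times\O_\d$ (clear, being a product of measurable functions, with the singularity at $x=y$ integrable thanks to $K\in L^1$), and that extending $U$ by zero outside $\O_\d$ does not alter $K^{s,\d}_\varphi(U)(x)$ for $x\in\O$, which holds precisely because $w_\d$ is supported in $B(0,\d)$ so $B(x,\d)\cap\supp w_\d(x-\cdot) \subset \O_\d$ whenever $x\in\O$. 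Everything else is a routine application of Young's inequality and Hölder on a set of finite measure; there is no real obstacle, only the need to be precise about which $L^r$–$L^q$ pairs are allowed and how the constant depends on the data.
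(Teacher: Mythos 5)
There is a genuine gap at the exponent-juggling step. After correctly absorbing the Lipschitz quotient and reducing to the convolution bound $\|K*|U|\|_{L^r}\le\|K\|_{L^1}\|U\|_{L^r}$, you invoke ``$\|g\|_{L^q(\O)}\le|\O|^{1/q-1/r}\|g\|_{L^r(\O)}$'' for $r\le q$. This inequality is false: on a set of finite measure H\"older gives the \emph{reverse} inclusion $L^q(\O)\hookrightarrow L^r(\O)$ when $r\le q$, i.e.\ $\|g\|_{L^r(\O)}\le|\O|^{1/r-1/q}\|g\|_{L^q(\O)}$, and an $L^r$ function on a bounded set need not lie in $L^q$ for $q>r$ (take any function with a mild singularity). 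So your argument only establishes the case $r=q$; for $r<q$ the claimed mapping $L^r(\O_\d)\to L^q(\O)$ is a genuine gain of integrability that cannot come from an $L^1$ kernel plus H\"older.

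The missing idea is that the kernel is better than $L^1$: since $s<1$, the function $z\mapsto w_\d(z)|z|^{-(n+s-1)}$ is compactly supported with a singularity of order $n+s-1<n$, hence it belongs to $L^p(\Rn)$ for every $1\le p<\frac{n}{n+s-1}$. Young's inequality in its full form, $\|K*U\|_{L^q}\le\|K\|_{L^p}\|U\|_{L^r}$ with $1+\frac1q=\frac1p+\frac1r$, then yields the smoothing $L^r\to L^q$ (this is the route taken in the cited source \cite[Lem.~3.3]{BeCuMC22b}; the paper itself only recalls the lemma and does not reprove it). Note that this forces the implicit restriction $\frac1r-\frac1q<\frac{1-s}{n}$ on the admissible pairs $(r,q)$, which the statement as transcribed here suppresses; your proof should make that restriction (or the choice $r=q$) explicit rather than claiming the result for all $1\le r\le q$. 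The remaining ingredients of your write-up --- the Lipschitz bound, $|U(y)\cdot\tfrac{x-y}{|x-y|}|\le|U(y)|$, the support bookkeeping, and the verbatim cross-product case --- are fine, and the $[\varphi]^q$ versus $[\varphi]$ issue is indeed only a typo in the displayed estimate.
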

	The nonlocal Leibniz formula (see \cite[Lem. 3.4]{BeCuMC22b} and \cite[Lem. 3.2]{BeCuMC}) is introduced next. Notice that the difference with its local analogous lies with the lineal operator $K^{s,\d}$, substituting one of the products of the classical case, as the interactions given by the product take place within the integral defining such term.
	\begin{lem} \label{lem: Divergencia no local producto}
		Let $0 < s < 1$, $0<\d$ and $1 \leq p < \infty$.
		Let $g \in H^{s,p,\d}(\O,\mathbb{R}^n)$ and $\f \in C^1(\overline{\O_{\d}})$.
		Then $\f g \in H^{s,p,\d}(\O,\mathbb{R}^n)$ and for a.e.\ $x \in \mathbb{R}^n$,
		\begin{equation} \label{eq: NL Leibniz rule div}
			\diver_\d^s (\f g) (x) = \f(x) \diver_\d^s g(x)+ K_{\f}^{s,\d} (g) (x) .
		\end{equation}
  In addition, when $n=3$ we also have
  \begin{equation} \label{eq: NL Leibniz rule curl}
			\curl_\d^s (\f g) (x) = \f(x) \curl_\d^s g(x)+ K_{\f}^{s,\d} (g \times) (x) .
		\end{equation}
	\end{lem}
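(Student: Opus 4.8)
\textbf{Proof strategy for Lemma \ref{lem: Divergencia no local producto}.}

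The plan is to reduce everything to the convolution representation of $D^s_\d$ and $\diver^s_\d$, exploiting that both are expressed through $\nabla(Q^s_\d * \cdot)$ (Proposition \ref{Prop: convolution with the classical gradient}), and to peel off the ``extra'' term that cannot be absorbed into a classical Leibniz rule because the kernel sees values at $y\neq x$. First I would establish the identity \eqref{eq: NL Leibniz rule div} for $g\in C^\infty_c(\Rn,\Rn)$ and $\f\in C^1(\overline{\O_\d})$ by direct manipulation of the principal-value integral. Writing out $\diver^s_\d(\f g)(x)$ from Definition \ref{def: nonlocal gradient}\ref{item:divsdu}, the integrand contains $\f(x)g(x)+\f(y)g(y)$; I would add and subtract $\f(x)g(y)$ to split this as $\f(x)\big(g(x)+g(y)\big) + \big(\f(y)-\f(x)\big)g(y)$. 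The first piece, after pulling the (now $y$-independent) factor $\f(x)$ out of the integral, reproduces exactly $\f(x)\diver^s_\d g(x)$; the second piece is precisely $-c_{n,s}\int_{B(x,\d)}\frac{\f(y)-\f(x)}{|x-y|^{n+s}}g(y)\cdot\frac{x-y}{|x-y|}w_\d(x-y)\,dy$, which after a sign flip is $K^{s,\d}_\f(g)(x)$ as in Lemma \ref{Lema operador lineal delta} (note the principal value is harmless on this term since the Lipschitz difference $\f(y)-\f(x)$ cancels one order of singularity, so it is an absolutely convergent integral and no pv is needed there). This yields \eqref{eq: NL Leibniz rule div} pointwise for smooth $g$.

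Next I would pass to general $g\in H^{s,p,\d}(\O,\Rn)$ by density. Take $g_k\in C^\infty_c(\Rn,\Rn)$ with $g_k\to g$ in $H^{s,p,\d}(\O,\Rn)$, so $g_k\to g$ in $L^p(\O_\d)$ and $D^s_\d g_k\to D^s_\d g$ (equivalently $\diver^s_\d g_k\to\diver^s_\d g$ componentwise via \eqref{equality nonlocal gradient and divergence componentwise}) in $L^p(\O)$. I must check three convergences: (a) $\f g_k\to \f g$ in $H^{s,p,\d}(\O,\Rn)$ so that $\diver^s_\d(\f g_k)\to\diver^s_\d(\f g)$ in $L^p(\O)$ — here I would use the smooth identity \eqref{eq: NL Leibniz rule div} already proven for $g_k$ together with the boundedness of $K^{s,\d}_\f$ to get a uniform $H^{s,p,\d}$ bound and identify the limit, or alternatively invoke the product mapping property from \cite[Lem. 3.4]{BeCuMC22b}; (b) $\f(x)\diver^s_\d g_k(x)\to \f(x)\diver^s_\d g(x)$ in $L^p(\O)$, immediate since $\f\in C^1(\overline{\O_\d})\subset L^\infty(\O)$; (c) $K^{s,\d}_\f(g_k)\to K^{s,\d}_\f(g)$ in $L^q(\O)$ (hence in $L^p$ after noting the relevant exponents), which is exactly the content of the continuity of $K^{s,\d}_\f: L^r(\O_\d)\to L^q(\O)$ from Lemma \ref{Lema operador lineal delta} applied with $r=p$. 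Passing to the limit in \eqref{eq: NL Leibniz rule div} written for $g_k$ then gives the identity a.e.\ for $g$, and the fact that $\f g\in H^{s,p,\d}(\O,\Rn)$ follows from the uniform bound in step (a).

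Finally, for \eqref{eq: NL Leibniz rule curl} with $n=3$, the argument is verbatim the same: in Definition \ref{def: nonlocal gradient}\ref{item:curlsdu} replace the dot product by the cross product, perform the identical add-and-subtract $\f(x)g(y)$ splitting (the cross product is bilinear, so $\f(x)$ pulls out of the first piece exactly as before), recognize the second piece as $K^{s,\d}_\f(g\times)$ from the cross-product version in Lemma \ref{Lema operador lineal delta}, and conclude by the same density argument using the boundedness of $K^{s,\d}_\f(\cdot\times)$. I expect the main obstacle to be purely bookkeeping: making sure the principal value in the $\diver^s_\d$ definition is correctly tracked through the splitting — specifically, confirming that after isolating $\f(x)(g(x)+g(y))$ the remaining pv over that term is still $\diver^s_\d g(x)$ (it is, since $\f(x)$ is a constant multiplier inside the $\e\to0^+$ limit), while the $K^{s,\d}_\f$ term needs no pv because the $\f$-difference regularizes the singularity; and, in the density step, matching the integrability exponents $r,q$ in Lemma \ref{Lema operador lineal delta} with $p$ so that all limits take place in the same space.
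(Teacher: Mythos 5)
Your proposal is correct and follows essentially the same route as the paper: the paper cites the divergence identity to \cite[Lem. 3.5]{BeCuMC22b} and proves the curl case by exactly your add-and-subtract of $\f(x)g(y)$, identifying the remainder as $K^{s,\d}_\f(g\times)$, then concluding by density. Your additional remarks on the principal value being absorbed by the Lipschitz difference and on the exponents in Lemma \ref{Lema operador lineal delta} are accurate bookkeeping, not deviations.
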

 \begin{proof}
     \eqref{eq: NL Leibniz rule div} is proven in \cite[Lem. 3.5]{BeCuMC22b}. \eqref{eq: NL Leibniz rule curl} is analogous. First we assume $g\in C^\infty_c(\R^3)$ and use that $\f$ can be extended to a function in $C^\infty_c(\R^3)$. Thus,
     \begin{equation}
    \begin{split}
         \curl_\delta^s \f g(x)&= c_{3,s} \int_{B(x,\delta)} \frac{\f(x) g(x) - \f(x) g(y) + \f(x) g(y)-\f(y) g(y)}{|x-y|}\times \frac{x-y}{|x-y|}\frac{w_\d(x-y)}{|x-y|^{2+s}} \, dy\\
         &=\f(x) \curl^s_\d g(x) + K^{s,\d}_\f (g\times).
    \end{split}
     \end{equation}
     The result follows from extension via density, as in \cite[Lem. 3.4]{BeCuMC22b}.
 \end{proof}


\subsection[Nonlocalised normal direction]{Nonlocalised normal direction} \label{subsec: technical results}

In this section, we introduce a nonlocal boundary operator that allows for a concise integration by parts formula. 


\begin{definition} \label{def: NL normal direction}
Let $0\leq s < 1$. We define the following term whenever it makes sense.
\begin{enumerate}[a)]
    \item For $u:\O_\d \to \R$ a measurable function,
    \begin{equation*}		\mathcal{N}u(x):=c_{n,s}\pv_x\int_{\O} \frac{u(y)}{|x-y|} \frac{x-y}{|x-y|} \frac{w_\delta(x-y)}{|x-y|^{n-1+s}}\in\, dy\in\Rn, \qquad x \in \Gamma_{\pm\d}.
	\end{equation*}
 \item For $\phi:\O_\d \to \Rn$ a measurable function,
    \begin{equation*}		\mathcal{N}\phi(x):=c_{n,s}\pv_x\int_{\O} \frac{\phi(y)}{|x-y|}\cdot \frac{x-y}{|x-y|} \frac{w_\delta(x-y)}{|x-y|^{n-1+s}}\, dy\in\R, \qquad x \in \Gamma_{\pm\d}.
	\end{equation*}
 \end{enumerate}
\end{definition}
\begin{rem}
\begin{enumerate}[(a)]
    \item We note that if $x\in\Gamma_\d$, then $\mathcal{N}u(x)$ and $\mathcal{N}\f(x)$ are well-defined for any integrable $u,\f$. The principal value is only needed for $x\in\Gamma_\d$.
    \item We can identify $\nu_\O(x)=\mathcal{N}\chi_\O(x)$ as a nonlocal normal vector at $x\in\Gamma_{\pm\d}$. An analogue of this, and its connection to nonlocal curvature for general measurable sets, was studied in~\cite{Rossi-Curvature}. 
\end{enumerate}
    
\end{rem}
In the next lemma, we identify a space of functions on which $\mathcal{N}$ is a bounded operator.
\begin{lem} \label{lem: controlling the boundary term}
	Let $0\leq s <1$, $1\leq p_1\leq \infty$ and $\e>0$ be given. Then, for every $p_2>\frac{1}{1-s}$ and $q< \frac{1}{s}$, we have the following.
	\begin{enumerate}[a)]
		\item $\mathcal{N}:L^{p_1}(\O) \cap L^{p_2}(\Gamma_{\pm\e}) \to L^{p_1}(\Gamma_\d\setminus\Gamma_\e)$ with
		\[
		\|\mathcal{N}u\|_{L^{p_1}(\Gamma_\d\setminus\Gamma_\e)}\leq \frac{c_{n,s}a_0}{\e^{(n+s)}} |\O|^{\frac{1}{p_1'}} |\Gamma_\d|^{\frac{1}{p_1}}\|u\|_{L^{p_1}(\O)}.
		\]
		\item $\mathcal{N}:L^{p_1}(\O) \cap L^{p_2}(\Gamma_{\pm\e}) \to L^q(\Gamma_\e,\Rn)$ with
		\[
		\|\mathcal{N}u\|_{L^q(\Gamma_{\e},\Rn)}\leq 	c_{n,s}M_{s,q}(\Gamma_{\d})^{1-\theta}	M_{s,p_2'}(\O)^{\theta}\|u\|_{L^{p_2}(\Gamma_{\pm\e})}
        + \frac{c_{n,s}a_0}{\e^{(n+s)}} |\O|^{\frac{1}{p_1'}} |\Gamma_\d|^{\frac{1}{p_1}}\|u\|_{L^{p_1}(\O)} .
		\]
	\end{enumerate}
where $\theta \in (0,1)$ and, for $O\subseteq \Rn$,
\begin{equation}\label{Eq:M_sq}
	M_{s,q}(O)=  \sigma_{n-1} a_0\begin{cases}
		\frac{1}{s} \left(\int_{O} \left[\frac{1}{\dist(x,\partial \O)^s}\right]^q\,dx\right)^{\frac{1}{q}}, & \text{ if }0<s<1\\
		\left( \int_{O}\left|\log(\dist(x,\partial \O)) \right|^q\, dx\right)^{\frac{1}{q}},& \text{ if }s=0.
	\end{cases} 
\end{equation}
Here $\sigma_{n-1}$ is the surface area of the unit sphere in $\Rn$.
\end{lem}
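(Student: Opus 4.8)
The plan is to estimate the operator $\mathcal{N}$ by splitting the domain of integration according to how close the source point $y \in \O$ is to the target point $x$. For part a), where $x \in \Gamma_\d \setminus \Gamma_\e$, the key observation is that $\dist(x,\partial\O) \geq \e$, so for every $y \in \O$ we have $|x-y| \geq \e$ (since $x$ lies outside $\O$ at distance at least $\e$). Hence the singular factor $\frac{w_\d(x-y)}{|x-y|^{n+s}}$ is bounded by $\frac{a_0}{\e^{n+s}}$ pointwise, using $0 \le w_\d \le a_0$ from the cut-off assumptions. No principal value is needed here. Then $|\mathcal{N}u(x)| \le c_{n,s}\frac{a_0}{\e^{n+s}}\int_\O |u(y)|\,dy \le c_{n,s}\frac{a_0}{\e^{n+s}}|\O|^{1/p_1'}\|u\|_{L^{p_1}(\O)}$ by Hölder; raising to the $p_1$-th power and integrating over $\Gamma_\d \setminus \Gamma_\e$ (whose measure is at most $|\Gamma_\d|$) gives the stated bound. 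For $p_1 = \infty$ the same argument works with the convention $|\O|^{1/p_1'} = 1$.

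For part b), where $x \in \Gamma_\e$, we split the integral over $\O$ into the part over $\O \cap B(x,\e)$ and the part over $\O \setminus B(x,\e)$. On $\O \setminus B(x,\e)$ the kernel is again bounded by $a_0/\e^{n+s}$, and this piece contributes exactly the second term in the claimed estimate, by the same Hölder argument as in part a) (here one estimates $\|u\|_{L^{p_1}(\O)}$, and the target exponent $q$ on $\Gamma_\e$ is harmless because $\Gamma_\e$ has finite measure — more precisely one should note $\|\cdot\|_{L^q(\Gamma_\e)} \le |\Gamma_\e|^{\ldots}\|\cdot\|_{L^\infty}$; the constant is absorbed, or one applies the part-a) bound directly since $\Gamma_\e \subset \Gamma_\d$). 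The main work is the near-diagonal piece over $\O \cap B(x,\e)$: here we need to control $\int_{\O \cap B(x,\e)} \frac{|u(y)|}{|x-y|^{n+s}}w_\d(x-y)\,dy$. We bound $w_\d \le a_0$ and apply Hölder in the form $\int \frac{|u(y)|}{|x-y|^{n+s}}\,dy \le \|u\|_{L^{p_2}(\O \cap B(x,\e))}\big(\int_{B(x,\e)}|x-y|^{-(n+s)p_2'}\,dy\big)^{1/p_2'}$. Since $p_2 > \frac{1}{1-s}$ forces $p_2' < \frac{1}{s}$, i.e. $(n+s)p_2' < n/s \cdot s + \ldots$ — one checks that the exponent $(n+s)p_2'$ is such that the radial integral $\int_0^{\dist(x,\partial\O)} r^{n-1-(n+s)p_2'}\,dr$...

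Actually the cleanest route is to recognize this near-diagonal contribution as precisely an instance of the potential estimate already encoded in the quantities $M_{s,q}$: one writes the map $u \mapsto \big(y \mapsto$ restriction$\big)$ followed by the convolution-type operator with kernel $|z|^{-(n+s)}w_\d(z)\chi_{|z| \le \dist(x,\partial\O)}$, and uses a Young/Hölder interpolation inequality in the spirit of fractional integration to land in $L^q(\Gamma_\e)$. The interpolation parameter $\theta \in (0,1)$ appears because we interpolate between the $L^q$-type bound governed by $M_{s,q}(\Gamma_\d)$ (the "singularity at the boundary $\partial\O$" bound for the kernel, valid since $q < 1/s$ makes $\dist(\cdot,\partial\O)^{-s} \in L^q$) and the dual bound $M_{s,p_2'}(\O)$ coming from testing against $L^{p_2}$. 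This yields the first term $c_{n,s}M_{s,q}(\Gamma_\d)^{1-\theta}M_{s,p_2'}(\O)^{\theta}\|u\|_{L^{p_2}(\Gamma_{\pm\e})}$; note the source norm is taken on $\Gamma_{\pm\e}$ rather than just $\Gamma_\e$ because for $x \in \Gamma_\e$ the ball $B(x,\e)$ reaches into $\O_{-\e}$ by a strip of width $\e$, so $\O \cap B(x,\e) \subset \Gamma_{\pm\e}$.

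The main obstacle I expect is the near-diagonal estimate in part b): one must carefully verify that $(n+s)p_2' < n$ is \emph{not} what is needed (it would be too strong), and instead that the correct bookkeeping uses the distance to $\partial\O$ as the effective cutoff radius — the integral $\int_{B(x,\e)} |x-y|^{-(n+s)p_2'}\,dy$ genuinely diverges unless one exploits cancellation or re-routes through the definition of $M_{s,p_2'}$. The resolution is that $\mathcal{N}$ integrates only over $\O$, and for $x \in \Gamma_\e$ the contribution of annuli $|x-y| \sim r$ is weighted by the measure of $\O \cap \{|x-y| \sim r\}$, which for small $r$ behaves like a \emph{half}-ball shell (since $x$ is near the boundary), effectively improving integrability; making this rigorous is exactly where $M_{s,q}$ and the interpolation exponent $\theta$ enter, mirroring the computation behind \cite[Def. 3.2, Lem. 3.3]{BeCuMC22b}. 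Once that lemma-style estimate is invoked, the two pieces combine to give the stated inequality.
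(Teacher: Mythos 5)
Your part a) is exactly the paper's argument: for $x\in\Gamma_\d\setminus\Gamma_\e$ and $y\in\O$ one has $|x-y|\ge\e$, so the kernel is bounded by $a_0/\e^{n+s}$ and H\"older finishes. The far-field piece of part b) is likewise fine. The genuine gap is in the near-diagonal estimate of part b), and the mechanism you propose to close it is not the right one. The saving does \emph{not} come from $\O\cap\{|x-y|\sim r\}$ being only a ``half-ball shell'' — that improves the constant by a bounded factor and does nothing for integrability. The correct observation is that for $x\in\Gamma_\d$ (hence $x\notin\O$) the set $\O\cap B(x,r)$ is \emph{empty} for all $r<\dist(x,\partial\O)$, so the co-area formula gives
\[
\int_{\O\cap B(x,\e)}\frac{w_\d(x-y)}{|x-y|^{n+s}}\,dy
\le \sigma_{n-1}a_0\int_{\dist(x,\partial\O)}^{\e} r^{-1-s}\,dr
\le \frac{\sigma_{n-1}a_0}{s}\,\dist(x,\partial\O)^{-s}
\]
(with the logarithm when $s=0$). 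This function of $x$ lies in $L^q(\Gamma_\d)$ precisely because $q<\frac1s$, and that is the origin of $M_{s,q}(\Gamma_\d)$; it immediately yields the endpoint bound $\mathcal{N}^\e:L^\infty(\Gamma_{\pm\e})\to L^q(\Gamma_\d)$ with constant $c_{n,s}M_{s,q}(\Gamma_\d)$. Your direct H\"older attempt with $\int_{B(x,\e)}|x-y|^{-(n+s)p_2'}\,dy$ is indeed doomed, and the fix is to abandon it rather than to look for boundary geometry or cancellation.

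The second endpoint, which you never establish, is $\mathcal{N}^\e:L^{p_2}(\Gamma_{\pm\e})\to L^1(\Gamma_\d)$: swap the order of integration (Minkowski's integral inequality), bound $\int_{\Gamma_\d}\chi_{B(y,\e)}(x)\,|x-y|^{-(n+s)}\,dx\lesssim \dist(y,\partial\O)^{-s}$ by the same co-area argument with the roles of $x$ and $y$ reversed, and note that this majorant is in $L^{p_2'}(\O)$ exactly because $p_2>\frac1{1-s}$ is equivalent to $sp_2'<1$; H\"older then gives the constant $M_{s,p_2'}(\O)$. (This also explains why the source norm sits on $\Gamma_{\pm\e}$: you are right that $\O\cap B(x,\e)\subset\Gamma_{\pm\e}$.) Finally, the exponent $\theta$ is not a vague ``Young/H\"older interpolation in the spirit of fractional integration'' but literally the Riesz--Thorin theorem applied between these two endpoints, producing $\|\mathcal{N}^\e u\|_{L^{q_\theta}}\le c_{n,s}M_{s,q}(\Gamma_\d)^{1-\theta}M_{s,p_2'}(\O)^{\theta}\|u\|_{L^{p_{\theta}}(\Gamma_{\pm\e})}$. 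As written, your proposal names the two constants that should appear but supplies neither endpoint estimate nor the interpolation step, so part b) remains unproved.
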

\begin{proof}
	\emph{Part a).} Let us first address the integrability on $\Gamma_\d\setminus\Gamma_\e$. By H\"older's inequality and the assumption that $\|w_\d\|_{L^\infty(\Rn)}=a_0$,
	\begin{equation*}
	\begin{split}
			&\left(\int_{\Gamma_\d\setminus\Gamma_\e} \left|\int_{\O}\frac{u(y)}{|x-y|}\frac{x-y}{|x-y|} \frac{w_\delta(x-y)}{|x-y|^{n-1+s}}\, dy\right|^{p_1}\,dx\right)^{\frac{1}{p_1}}\\ &\qqquad\leq
			\left(\int_{\Gamma_\d\setminus\Gamma_\e} \|u\|_{L^p(\O)}^{p_1} \left(\int_{\O} \frac{a_0^{p_1'}}{|x-y|^{(n+s)p_1'}}\,dy\right)^{\frac{p_1}{p_1'}}\, dx\right)^{\frac{1}{p_1}}\leq \|u\|_{L^{p_1}(\O)} \frac{a_0}{\e^{(n+s)}} |\O|^{\frac{1}{p_1'}} |\Gamma_\d|^{\frac{1}{p_1}}.
	\end{split}
	\end{equation*}
The conclusion follows.

	\emph{Part b).} Let $x\in \Gamma_\d$, we split
	\begin{equation*}
		c_{n,s}^{-1}\mathcal{N}u(x)=\int_{\O\cap B(x,\e)^c}\frac{u(y)}{|x-y|}\frac{x-y}{|x-y|} \frac{w_\delta(x-y)}{|x-y|^{n-1+s}}\, dy+\int_{\O\cap B(x,\e)}\frac{u(y)}{|x-y|}\frac{x-y}{|x-y|} \frac{w_\delta(x-y)}{|x-y|^{n-1+s}}\, dy.
	\end{equation*}
	 As in\emph{Part a)}, we bound the first term by
	\begin{equation*}
		\int_{\O_\d }\left|\int_{\O\cap B(x,\e)^c}\frac{u(y)}{|x-y|}\frac{x-y}{|x-y|} \frac{w_\delta(x-y)}{|x-y|^{n-1+s}}\, dy\right|^{p_1} \, dx \leq \frac{a_0^{p_1}}{\e^{(n+s)p_1}} |\O|^{\frac{p_1}{p_1'}} |\Gamma_\d\|u\|_{L^{p_1}(\O)}^{p_1} .
	\end{equation*}
	Turning to the integrability
	\begin{equation*}
		\mathcal{N}^\e u(x):=c_{n,s}\int_{\O\cap B(x,\e)}\frac{u(y)}{|x-y|}\frac{x-y}{|x-y|} \frac{w_\delta(x-y)}{|x-y|^{n-1+s}}\, dy,\qquad x \in \Gamma_{\pm\d},
	\end{equation*}
	we will use the Riesz-Thorin interpolation theorem. The proof is divided into four parts. First, we study the asymptotic behaviour of
	\begin{equation} \label{eq: boundary term integrable}
		\int_{\O\cap B(x,\e)} \frac{1}{|x-y|}\frac{x-y}{|x-y|}\frac{w_\d(x-y)}{|x-y|^{n-1+s}} \, dy 
	\end{equation}
	as $x$ approaches $\partial \O$.
	In the second and third parts, we will see that the statement holds for functions in $L^\infty$ and $L^{p_2}$, respectively. For the last part, we apply Riesz-Thorin interpolation theorem to conclude the result.\\	
	\emph{Part 1.} Fix $x\in\Gamma_\d$. 
	As $|w_\d|\leq a_0$, the co-area formula implies 
	\begin{equation}\label{eq: bound of the boundary term}
		\begin{split}
		\int_{\O\cap B(x,\e)}\frac{|w_\d(x-y)|}{|x-y|^{n+s}} \, dy
        &\leq \int_{\dist(x,\p \O)}^{\e} \frac{a_0}{r^{n+s}} \int_{\partial B(x,r) \cap \Omega} \, d \mathcal{H}^{n-1}(z) \, dr \\
	&\leq  \int_{\dist(x,\p \O)}^{\e} \frac{a_0}{r^{n+s}}\sigma_{n-1} r^{n-1}  \, dr\\
        &=\sigma_{n-1}a_0
        \left\{\begin{array}{ll}
           \frac{1}{s} \left[\frac{1}{\dist(x,\partial \O)^s}-\frac{1}{\e^s}\right],  & \text{ if }0<s<1, \\
            \log(\e)-\log(\dist(x,\partial \O)), & \text{ if }s=0.
        \end{array}\right.
		\end{split}
	\end{equation}
	Thus, the term in \eqref{eq: boundary term integrable} is in $L^{\bar{q}}(\Gamma_\d)$ for every $\bar{q}<\frac{1}{s}$.\\	
	\emph{Part 2.} Fix $u \in L^{\infty}(\Gamma_{\pm\e})$. Then,
	\begin{equation} \label{eq: controlling boundary term infinty case 1}
		\left|\int_{\O\cap B(x,\e)} \frac{u(y)}{|x-y|}\frac{x-y}{|x-y|}\frac{w_\d(x-y)}{|x-y|^{n-1+s}} \, dy   \right|\leq \int_{\O\cap B(x,\e)} \frac{\|u\|_{L^\infty(\Gamma_{\pm\e})}}{|x-y|}\frac{|w_\d(x-y)|}{|x-y|^{n-1+s}} \, dy .
	\end{equation}
	Applying \eqref{eq: bound of the boundary term} and \eqref{eq: controlling boundary term infinty case 1},
	\begin{equation*} 
			\|\mathcal{N}^\e u\|_{L^{q}(\Gamma_\d)}\leq c_{n,s}M_{s,q}(\Gamma_\delta)	\|u\|_{L^\infty(\Gamma_{\pm\e})},
	\end{equation*}
which is finite for every $q<\frac{1}{s}$.\\
	\emph{Part 3.} Now, let $u \in L^{p_2}(\Gamma_{\pm\e})$, with ${p_2}>\frac{1}{1-s}$, be given. By Minkowski integral inequality, we have
	\begin{equation} \label{eq: bound of the boundary term Part 3}
		\int_{\Gamma_\d} \left| \int_{\O\cap B(x,\e)} \frac{u(y)}{|x-y|}\frac{x-y}{|x-y|} \frac{w_\delta(x-y)}{|x-y|^{n-1+s}}\, dy \right| \, dx \leq \int_{\O} \int_{\Gamma_{\d}} |u(y)| \frac{w_\delta(0) \chi_{B(y,\e)}}{|x-y|^{n+s}}\, dx\, dy.
	\end{equation}
	Now, as \eqref{eq: bound of the boundary term} remains true with $\O$ and $\Gamma_\d$, and $y$ and $x$, interchanged, respectively, we find, for $y \in \O$, that
	\begin{equation*}
		\int_{\Gamma_{\d}} \frac{w_\delta(0) \chi_{|x-y|<\e}}{|x-y|^{n+s}} \, dx  \leq \sigma_{n-1} a_0\begin{cases}
			\frac{1}{s} \left[\frac{1}{\dist(y,\partial \O)^s}-\frac{1}{\e^s}\right], & \text{ if } 0<s<1,\\
			\log(\e)-\log(\dist(y,\partial \O)),& \text{ if }s=0.
		\end{cases}
	\end{equation*}
Then, recalling~\eqref{Eq:M_sq},
\begin{equation*}
	\left(\int_{\O}\left|\int_{\Gamma_\d}  \frac{w_\delta(0) \chi_{B(y,\e)}}{|x-y|^{n+s}}\, dx\right|^{p_2'}\right)^{\frac{1}{p_2'}}\leq M_{s,p_2'}(\O)
\end{equation*}
	Notice that the integration over $\O$ in \eqref{eq: bound of the boundary term Part 3} is equivalent to the integration over $\O \cap \Gamma_{\pm\e}$.  Applying H\"older inequality we get
	\begin{equation*}
		\int_{\O \cap \Gamma_{\pm\e}} |u(y)|\int_{\Gamma_\d}  \frac{w_\delta(0) \,  \chi_{B(y,\e)}}{|x-y|^{n+s}}\, dx\, dy \leq M_{s,p_2'}(\O)\|u\|_{L^{p_2}(\Gamma_{\pm\e})}
	\end{equation*}
	which is finite since for ${p_2}>\frac{1}{1-s}$ we have that $sp_2'=\frac{sp_2}{p_2-1}<1$.\\
	\emph{Part 4.} We already have $\mathcal{N}:L^{p_2}(\Gamma_{\pm\e}) \to L^1(\Gamma_{\d})$ and $\mathcal{N}^\e:L^\infty(\Gamma_{\pm\e}) \to L^q(\Gamma_\d)$ are bounded linear operators, for every $q<\frac{1}{s}$. By Riesz-Thorin interpolation theorem, we conclude that $\mathcal{N}^\e:L^{p_2}(\Gamma_{\pm\e}) \to L^q(\Gamma_\d)$	satisfies
	\begin{equation*}
		\|\mathcal{N}^\e u\|_{L^q(\Gamma_{\d})}\leq c_{n,s}M_{s,q}(\Gamma_\d)^{1-\theta} M_{s,p_2'}(\O)^\theta \|u\|_{L^{p_2}(\Gamma_{\pm\e})},
  \quad\text{ for all $0<\theta<1$, ${p_2}>\textstyle{\frac{1}{1-s}}$, and $q<\textstyle{\frac{1}{s}}$.}
	\end{equation*}
	 Indeed, due to \emph{Part 2} and \emph{Part 3} and the Riesz-Thorin interpolation theorem, we see that $\mathcal{N}^\e:L^{p_\theta}(\Gamma_{\pm\e}) \to L^{q_\theta}(\Gamma_{\d})$ is a bounded linear operator and
	\begin{equation*}
		\frac{1}{p_\theta}=\frac{1-\theta}{{p_2}}+0 \qquad \frac{1}{q_\theta}=1-\theta +\frac{\theta}{q},\quad\text{ with $0<\theta<1$}.
	\end{equation*}
	Since $\frac{1}{q}>s$, 
	\begin{equation*}
		\frac{1}{q_\theta}=1-\theta +\frac{\theta}{q}>1-\theta +\theta s>s \quad \Longleftrightarrow \quad \theta>1.
	\end{equation*}
	Thus ${p_2}>\frac{1}{1-s}$ and $p_{\theta}>\frac{1}{1-s} \frac{1}{(1-\theta)}>\frac{1}{1-s}$, for any $0<\theta<1$. The lemma is established.
\end{proof}


\subsection[Divergence Theorem]{Divergence Theorem}
In \cite[Th. 3.2]{BeCuMC22} a nonlocal integration by parts for smooth functions where at least one has compact support was shown. Here we are interested in situations where neither function is required to be compactly supported. Within this framework, we obtain a candidate for nonlocal Neumann boundary conditions, which was partially studied in subsection \ref{subsec: technical results}. To this end, we establish nonlocal divergence theorem that includes a double volumetric one boundary boundary term.

Recall that $\Gamma_{\pm\e}:=\{x \in \O_\d : \dist(x,\partial \O) < \e \}$, for $0<\e<\delta$. In the following statement, notice that the vector $\frac{x-y}{|x-y|}$ points from the outer collar into $\O$.
\begin{theo}[\textbf{Divergence theorem}] \label{th: nonlocal divergence theorem}
	Let $0\leq s<1$, $1<p<\infty$ and $\e>0$. Assume $\f\in H^{s,p,\d}(\O,\Rn)\cap L^q(\Gamma_{\pm\e},\Rn$) for some $q>\frac{1}{1-s}$, then the following equality holds 
	\begin{equation*}
		\int_{\O} \diver^s_\d \f(x) \, dx=  -c_{n,s}\int_{\Gamma_{\d}}\int_{\O} \frac{\f(x)+\f(y)}{|x-y|}\cdot\frac{x-y}{|x-y|}\frac{w_\d(x-y)}{|x-y|^{n+s-1}} \, dx \,dy .
	\end{equation*}
\end{theo}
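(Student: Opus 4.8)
The plan is to reduce the statement to the case of smooth compactly supported fields, where the integration-by-parts identity of \cite[Th.~3.2]{BeCuMC22} applies, and then pass to the limit using the density of $C^\infty_c(\Rn)$ in $H^{s,p,\d}(\O)$ together with the boundedness estimates of Lemma~\ref{lem: controlling the boundary term}. First I would observe that the right-hand side is exactly $-\int_{\Gamma_\d}\mathcal{N}\f(x)\,dx$ when one extends the principal-value integral appropriately, since for $x\in\Gamma_\d$ the ball $B(x,\d)$ meets $\O$ and the kernel $w_\d(x-y)/|x-y|^{n+s}$ restricted to $y\in\O$ is what appears in Definition~\ref{def: NL normal direction}; more precisely the boundary integrand is $\mathcal{N}\f(x)$ plus the symmetric term coming from $\f(x)$ itself. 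So the target identity can be rephrased as
\[
	\int_\O \diver^s_\d\f(x)\,dx = -c_{n,s}\int_{\Gamma_\d}\int_\O \frac{\f(x)+\f(y)}{|x-y|}\cdot\frac{x-y}{|x-y|}\frac{w_\d(x-y)}{|x-y|^{n+s-1}}\,dx\,dy,
\]
and the structure of the proof is: (1) prove it for $\f\in C^\infty_c(\Rn,\Rn)$; (2) show both sides are continuous with respect to the relevant convergence of a sequence $\f_k\to\f$.

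For step (1), take $\f\in C^\infty_c(\Rn,\Rn)$ and apply the nonlocal integration by parts from \cite[Th.~3.2]{BeCuMC22} with the constant function $v\equiv 1$ on $\O$, or rather with a cutoff equal to $1$ on a large ball containing $\O_\d$, so that $D^s_\d v=0$ on $\O$. This yields $\int_\O \diver^s_\d\f\cdot 1 = -\int_\O \f\cdot D^s_\d v + (\text{boundary/collar terms})$, and since $D^s_\d v$ vanishes on the relevant region the only surviving contribution is precisely the double integral over $\Gamma_\d\times\O$. Alternatively, and perhaps cleaner, one writes $\diver^s_\d\f(x)$ as a single absolutely convergent integral after regularizing the principal value, applies Fubini on $\O\times\O$ and on $\O\times\Gamma_\d$, uses the odd symmetry of $(x-y)/|x-y|^{n+s+1}\,w_\d(x-y)$ under $x\leftrightarrow y$ to cancel the $\O\times\O$ part, and is left with the $\O\times\Gamma_\d$ part (all interactions reach at most distance $\d$, so no contribution from outside $\O_\d$). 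I expect the bookkeeping of the principal value — justifying that $\int_\O \pv_x(\cdots)\,dx = \pv\!\!\int_{\O\times\O}(\cdots)$ and that the singular part integrates to zero by antisymmetry — to be the most delicate point of this step, though for smooth $\f$ the cancellation near the diagonal is elementary since $\f(x)+\f(y) = 2\f(x) + O(|x-y|)$ and the leading term pairs with an odd kernel.

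For step (2), let $\f_k\in C^\infty_c(\Rn,\Rn)$ with $\f_k\to\f$ in $H^{s,p,\d}(\O,\Rn)$; by a diagonal/truncation argument one can also arrange $\f_k\to\f$ in $L^q(\Gamma_{\pm\e},\Rn)$, since $H^{s,p,\d}$ functions in $L^q(\Gamma_{\pm\e})$ can be approximated simultaneously in both norms (this mollification argument is where I'd be most careful, as it uses that $C^\infty_c$ is dense and that mollification preserves both the $H^{s,p,\d}$ and the local $L^q$ convergence). The left-hand side converges because $\diver^s_\d\f_k\to\diver^s_\d\f$ in $L^p(\O)$ and $\O$ has finite measure, so $\int_\O\diver^s_\d\f_k\to\int_\O\diver^s_\d\f$. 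For the right-hand side, split the $y$-integral over $\O$ into $\O\cap B(x,\e)^c$ and $\O\cap B(x,\e)$: on the far part the kernel is bounded by $a_0\e^{-(n+s)}$ and one controls the difference by $\|\f_k-\f\|_{L^p(\O)}$ as in Lemma~\ref{lem: controlling the boundary term}(a); on the near part, which only involves $x$ and $y$ both within distance $\e$ of $\partial\O$, i.e. in $\Gamma_{\pm\e}$, one invokes exactly the bound of Lemma~\ref{lem: controlling the boundary term}(b) (with $p_1=p$, $p_2=q>\frac{1}{1-s}$, and a suitable $q'<\frac1s$ chosen so that $\Gamma_\d$ has finite measure makes the integral over $x$ converge), giving control by $\|\f_k-\f\|_{L^q(\Gamma_{\pm\e})}$. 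The $\f(x)$-term is handled symmetrically. Hence both sides converge to the claimed quantities and the identity passes to the limit. The main obstacle, as indicated, is the simultaneous-approximation lemma and the careful matching of the Lebesgue exponents so that every integral appearing in the estimate is finite — this is precisely why the hypothesis $q>\frac{1}{1-s}$ and the machinery of Lemma~\ref{lem: controlling the boundary term} were set up in advance.
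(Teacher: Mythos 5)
Your Part 1 (smooth case), in its ``alternative'' form, is exactly the paper's argument: write $\int_\O \diver^s_\d\f$ as a double integral over $\O\times\O_\d$ with the principal-value cutoff, pull the limit outside (the inner integral is uniformly bounded for smooth $\f$ thanks to the $\f(x)+\f(y)=2\f(x)+O(|x-y|)$ cancellation you note), kill the $\O\times\O$ block by antisymmetry of the kernel under $x\leftrightarrow y$, and check $L^1(\O\times\Gamma_\d)$ integrability to remove the cutoff from the surviving $\Gamma_\d\times\O$ block. Your first suggested route --- invoking \cite[Th.~3.2]{BeCuMC22} with a cutoff $v\equiv1$ near $\O_\d$ --- is dubious: that result is an integration by parts for pairs in which at least one function is compactly supported precisely so that no collar terms appear, so it cannot by itself produce the boundary term you are trying to derive; the direct computation is the right choice.

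The gap is in Part 2. You assert that a sequence $\f_k\in C^\infty_c(\Rn,\Rn)$ can be arranged to converge to $\f$ simultaneously in $H^{s,p,\d}(\O,\Rn)$ and in $L^q(\Gamma_{\pm\e},\Rn)$, and then you pass to the limit directly in the boundary integral over the full collar $\Gamma_\d$. But the paper has no such global simultaneous-density result and explicitly disclaims one: mollification of a function known only on $\O_\d$ is defined, and converges, only on subsets compactly contained in $\O_\d$ (Proposition~\ref{prop sequence with mollifiers} yields $L^p_{\loc}$ convergence and equality $D^s_\d(u*\eta_\e)=(D^s_\d u)*\eta_\e$ only on $\O_{-\e\d}$). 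Convergence in $L^q$ up to the outer boundary $\partial\O_\d$ --- which your limit passage on all of $\Gamma_\d$ requires --- is exactly what is unavailable, and your phrase ``local $L^q$ convergence'' concedes this without resolving it. The paper's resolution is a two-parameter limit: apply the smooth identity on the shrunk domain $\O_{-r}$, whose collar $\O_{\d-r}\setminus\O_{-r}$ is compactly contained in $\O_\d$ so the mollified sequence does converge there; pass $j\to\infty$ using Lemma~\ref{lem: controlling the boundary term}; and only then let $r\to0^+$, checking that the constants $M_{s,q}(\O_{\d-r}\setminus\O_{-r})$, $M_{s,p_2'}(\O_{-r})$, $|\O_{-r}|$ and $|\O_{\d-r}\setminus\O_{-r}|$ are bounded uniformly in $r$. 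Without this device, or a genuine simultaneous-approximation lemma that you would have to prove, step (2) as written is incomplete.
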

\begin{proof}
	%
	\emph{Part 1.} We prove it first for smooth functions $\f\in C^{\infty}_c(\Rn)$. By definition \ref{def: nonlocal gradient},
	\begin{align*}
		\frac{1}{c_{n,s}}\int_{\O} \diver^s_\d \f(x) \, dx&=- \int_{\O} \pv_x \int_{\O_\d} \frac{\f(x)+\f(y)}{|x-y|}\cdot\frac{x-y}{|x-y|}\frac{w_\d(x-y)}{|x-y|^{n+s-1}} \, dy \,dx \\
		&= -\lim_{\e \to 0}\int_{\O_\d} \int_{\O} \frac{\f(x)+\f(y)}{|x-y|}\cdot\frac{x-y}{|x-y|}\frac{w_\d(x-y)}{|x-y|^{n+s-1}} \chi_{B(y,\e)^c} \, dx \,dy ,
	\end{align*}
	where the limit can be taken out of the integral thanks to the odd symmetry. We may thus rewrite the inner integral in the second term as 
	\[\left|\int_{B(x, \d)}\frac{\f(x)+\f(y)}{|x-y|}\cdot\frac{x-y}{|x-y|}\frac{w_\d(x-y)}{|x-y|^{n+s-1}}  \chi_{B(y,\e)^c} \, dy\right| \leq \|\nabla u\|_{\infty} (n+s-1) \|\rho_\d\|_{L^1(\Rn)}.\]
	Then, we split the integral in two and notice that one of the terms is zero by symmetry
	\begin{align*}
		\frac{1}{c_{n,s}} \int_{\O} \diver^s_\d \f(x) \, dx=& -\lim_{\e \to 0}\int_{\O}  \int_{\O} \frac{\f(x)+\f(y)}{|x-y|}\cdot\frac{x-y}{|x-y|}\frac{w_\d(x-y)}{|x-y|^{n+s-1}} \chi_{B(y,\e)^c} \, dx \,dy  \\
		& - \lim_{\e \to 0}\int_{\Gamma_{\d}}  \int_{\O} \frac{\f(x)+\f(y)}{|x-y|}\cdot\frac{x-y}{|x-y|}\frac{w_\d(x-y)}{|x-y|^{n+s-1}} \chi_{B(y,\e)^c} \, dx \,dy  \\
		=&-\lim_{\e \to 0}\int_{\Gamma_{\d}}  \int_{\O} \frac{\f(x)+\f(y)}{|x-y|}\cdot\frac{x-y}{|x-y|}\frac{w_\d(x-y)}{|x-y|^{n+s-1}} \chi_{B(y,\e)^c} \, dx \,dy  .
	\end{align*}
	Now, we find 
	\[
	(x,y)\mapsto\frac{\f(x)+\f(y)}{|x-y|}\cdot\frac{x-y}{|x-y|}\frac{w_\d(x-y)}{|x-y|^{n+s-1}} \in L^1(\O \times\Gamma_\d).
	\]
	Hence, we can transfer the last limit with the integral, and therefore,
	\begin{equation*}
		\int_{\O} \diver^s_\d \f(x) \, dx= -c_{n,s}\int_{\Gamma_{\d}} \int_{\O} \frac{\f(x)+\f(y)}{|x-y|}\cdot\frac{x-y}{|x-y|}\frac{w_\d(x-y)}{|x-y|^{n+s-1}} \, dx \,dy .
	\end{equation*}
	\emph{Part 2.} Now, let $\f \in H^{s,p,\d}(\O,\Rn)\cap L^q(\Gamma_{\pm\e},\Rn)$ and $0<r<\e$, then, by Proposition \eqref{prop sequence with mollifiers} we can take a sequence $\{\f_j\}_{j\in \N}\subset C^{\infty}_c(\Rn)$ such that
	\begin{equation*}
		\f_j \to \f \text{ in } L^q(\Gamma^r,\Rn), \,\, \f_j \to \f \text{ in } L^p(\O_{\d-r},\Rn), \quad \text{ and } \quad D^s_\d \f_j \to D^s_\d \f \text{ in }  L^p(\O_{-r}, \Rnn).
	\end{equation*}
	Next we apply the nonlocal divergence theorem for the smooth functions $\f_j$ on the domain $\O_{-r}$.
	\begin{equation} \label{eq: nl div Th for smooth functions}
		\int_{\O_{-r}} \diver^s_\d \f_j(x) \, dx
    =-c_{n,s}\int_{\O_{\d-r}\backslash \O_{-r}} \int_{\O_{-r}} \frac{\f_j(x)+\f_j(y)}{|x-y|}\cdot\frac{x-y}{|x-y|}\frac{w_\d(x-y)}{|x-y|^{n+s-1}} \, dx \,dy .
	\end{equation}
	Finally, by Lemma \ref{lem: controlling the boundary term} we have that there exists $C>0$ such that
	\begin{equation*}
		\begin{split}
			\int_{\O_{\d-r}\backslash \O_{-r}}  \int_{\O_{-r}}\left| \frac{\f(x)}{|x-y|}\frac{w_\d(x-y)}{|x-y|^{n+s-1}}\right| \, dx \,dy &\leq C \|\f\|_{L^p(\O_{\d-r})}\\
			  \int_{\O_{-r}}	\int_{\O_{\d-r}\backslash \O_{-r}} \left| \frac{\f(y)}{|x-y|}\frac{w_\d(x-y)}{|x-y|^{n+s-1}}\right| \, dy \, dx &\leq C \|\f\|_{L^p(\O_{\d-r})},
		\end{split}
	\end{equation*}
	which, combined with \eqref{eq: nl div Th for smooth functions} and $\f_j \to \f$ in $H^{s,p,\d}(\O)$ imply 
	\begin{equation*}
		\int_{\O_{-r}} \diver^s_\d \f(x) \, dx
    =  -c_{n,s}\int_{\O_{\d-r}\backslash \O_{-r}}\int_{\O_{-r}} \frac{\f(x)+\f(y)}{|x-y|}\cdot\frac{x-y}{|x-y|}\frac{w_\d(x-y)}{|x-y|^{n+s-1}} \, dx \,dy .
	\end{equation*}
	Now, the results follow taking $r$ to zero, as the constants appearing in the inequalities in Lemma \ref{lem: controlling the boundary term}, i.e.$M_{s,q}(\O_{\d-r}\backslash \O_{-r}),	M_{s,p_2'}(\O_{-r}),|\O_{-r}|$, and $ |\O_{\d-r}\backslash\O_{-r}|$ can be bounded, respectively, by $M_{s,q}(\Gamma_{\d})$,	$M_{s,p_2'}(\O)$, $|\O|$, and  $|\O_{\d}\backslash\O|$. Indeed, it is straightforward that $\left| \O_{-r}\right|\leq C\left| \O\right|$ and $M_{s,p'_2}(\O_{-r}) \leq M_{s,p'_2}(\O)$, whereas for the other two cases ($M_{s,q}(\O_{\d-r}\backslash \O_{-r})$, and $ |\O_{\d-r}\backslash\O_{-r}|)$ we can apply the following result. We have that for $f\in L^1(\O_\d)$, $f\geq 0$ in $\O_\d$, by the dominate convergence theorem,
 \begin{equation*}
     \int_{\Rn} f(x) \chi_{\O_{\d-r}\backslash \O_{-r}} \, dx\to \int_{\Rn} f(x) \chi_{\O_{\d}\backslash \O} \, dx
 \end{equation*}
 when $r\to 0^+$. Thus, for a given $r_0>0$, there exists a constant $C>0$ such that for every $r\in (0,r_0)$,
  \begin{equation*}
     \int_{\Rn} f(x) \chi_{\O_{\d-r}\backslash \O_{-r}} \, dx \leq C \int_{\Rn} f(x) \chi_{\O_{\d}\backslash \O} \, dx.
 \end{equation*}
\end{proof}
\begin{rem}
    On the assumptions of Theorem \ref{th: nonlocal divergence theorem}, by $\supp w_\d=B(0,\d)$ and Lemma \ref{lem: controlling the boundary term}, so that we can swap integrals, we can write
     \begin{equation*}
     \begin{split}
         \int_{\O} \diver^s_\d \f(x) \, dx=&  -c_{n,s}\int_{\Gamma_{-\d}}\int_{\Gamma_{\d}}\f(y) \cdot \frac{x-y}{|x-y|^2}\frac{w_\d(x-y)}{|x-y|^{n+s-1}} \, dy \,dx \\
         &+c_{n,s}\int_{\Gamma_{\d}} \int_{\Gamma_{-\d}}\f(y)\cdot\frac{x-y}{|x-y|^2}\frac{w_\d(x-y)}{|x-y|^{n+s-1}} \, dy \,dx ,
          \end{split}
    \end{equation*}
    where the term $\f(y)$ can be taken outside the inner integral after swapping integrals again.
\end{rem}
Similarly, the next result can be stated.
\begin{theo} \label{th: nonlocal Stokes theorem}
	Let $n=3$, $0\leq s<1$, $1<p<\infty$ and $\e>0$. Assume $\f\in H^{s,p,\d}(\O,\Rn)\cap L^q(\Gamma_{\pm\e},\Rn$) for some $q>\frac{1}{1-s}$, then the following equality holds 
	\begin{equation} \label{eq: nonlocal Stokes theorem}
		\int_{\O} \curl^s_\d \f(x) \, dx= -c_{n,s}\int_{\Gamma_{\d}} \int_{\O} \frac{\f(x)+\f(y)}{|x-y|}\times\frac{x-y}{|x-y|}\frac{w_\d(x-y)}{|x-y|^{n+s-1}} \, dx \,dy .
	\end{equation}
\end{theo}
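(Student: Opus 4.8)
The plan is to mirror exactly the proof of Theorem \ref{th: nonlocal divergence theorem}, since the nonlocal curl differs from the nonlocal divergence only in replacing the dot product by the cross product, and neither operation interferes with the structural features exploited in that argument. First I would prove \eqref{eq: nonlocal Stokes theorem} for smooth compactly supported fields $\f \in C^\infty_c(\R^3)$. Writing out $\curl^s_\d \f$ from Definition \ref{def: nonlocal gradient}\,\ref{item:curlsdu}) and integrating over $\O$, the principal value can be pulled outside the $x$-integral: the cross product $\frac{x-y}{|x-y|}$ still provides the odd symmetry $z \mapsto -z$ that made the $\e$-limit interchangeable with the integral, so I would bound the truncated inner integral uniformly (now by $\|\nabla\f\|_\infty (n+s-1)\|\rho_\d\|_{L^1(\R^n)}$ using the mean value theorem on $\f(x)+\f(y) = 2\f(x) - (\f(x)-\f(y))$, where the $2\f(x)$ piece cancels by odd symmetry in $z$). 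Then I would split $\O_\d = \O \cup \Gamma_\d$ in the outer variable; the integral over $\O \times \O$ vanishes by the same odd symmetry argument (swapping $x \leftrightarrow y$ sends $\frac{x-y}{|x-y|} \times (\f(x)+\f(y))$ to its negative), leaving only the $\Gamma_\d \times \O$ term, which is absolutely integrable because $w_\d$ is bounded and the singularity $|x-y|^{-(n+s)}$ is integrable over this product set away from $\partial\O$; dominated convergence then removes the remaining $\e$-limit.

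For the second part, extending to $\f \in H^{s,p,\d}(\O,\R^n) \cap L^q(\Gamma_{\pm\e},\R^n)$ with $q > \frac{1}{1-s}$, I would again follow the divergence-theorem template verbatim. Using the mollification/approximation result (Proposition \ref{prop sequence with mollifiers} in the excerpt) I would pick $\f_j \in C^\infty_c(\R^3)$ with $\f_j \to \f$ in $L^q(\Gamma^r)$, in $L^p(\O_{\d-r})$, and $D^s_\d \f_j \to D^s_\d \f$ in $L^p(\O_{-r})$ for $0 < r < \e$. I would apply Part 1 on the shrunken domain $\O_{-r}$, pass to the limit in $j$ using Lemma \ref{lem: controlling the boundary term} to control the boundary-collar integrals (the bounds there are componentwise in $\f$, so the cross-product structure is irrelevant — one estimates each scalar entry of $\f$ against the same kernel), and finally send $r \to 0^+$, invoking the same monotone/dominated-convergence argument on the indicator functions $\chi_{\O_{\d-r}\setminus\O_{-r}}$ and the comparison of $M_{s,q}$, $M_{s,p_2'}$ constants used in the divergence theorem.

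The main (and really only) subtlety is that the two cancellation steps — pulling the principal value out past the $x$-integral, and killing the $\O \times \O$ contribution — both rest on an odd-symmetry argument, and I want to be sure the cross product behaves correctly: under $z \mapsto -z$ the factor $(\f(x)-\f(\cdot)) \times \frac{x-\cdot}{|x-\cdot|}$ does flip sign (both the difference quotient direction and the cross-product argument flip), and under the swap $x \leftrightarrow y$ in the $\O \times \O$ integral one has $(\f(x)+\f(y)) \times (x-y) = -(\f(y)+\f(x)) \times (y-x)$, so both go through exactly as in the dot-product case. Beyond that, every estimate is a verbatim transcription — the boundedness of $w_\d$, integrability of the fractional kernel on the product sets, and Lemma \ref{lem: controlling the boundary term} — so I would simply state that the proof is identical to that of Theorem \ref{th: nonlocal divergence theorem} with the dot product replaced by the cross product throughout, and not reproduce the routine computations.
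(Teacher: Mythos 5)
Your proposal is correct and follows essentially the same route as the paper: the paper's proof simply rewrites $\curl^s_\d\f$ with $\f(x)+\f(y)$ via odd symmetry and then repeats the divergence-theorem argument with the cross product in place of the dot product. Your explicit verification that the cross product preserves the two odd-symmetry cancellations is exactly the point that makes this transcription legitimate.
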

\begin{proof}
	By odd symmetry we have that
	\begin{equation*}
		\begin{split}
			\curl^s_\d \f(x)&=c_{n,s} \int_{B(x,\d)} \frac{\f(x)-\f(y)}{|x-y|} \times \frac{x-y}{|x-y|} \frac{w_\d(x-y)}{|x-y|^{n-1+s}} \, dy \\
			&= -c_{n,s} \pv_x\int_{B(x,\d)} \frac{\f(x)+\f(y)}{|x-y|} \times \frac{x-y}{|x-y|} \frac{w_\d(x-y)}{|x-y|^{n-1+s}} \, dy		.
		\end{split}
	\end{equation*}
	Then, following the same steps from the proof of Theorem \ref{th: nonlocal divergence theorem} and placing the cross product instead of the dot product, it yields \eqref{eq: nonlocal Stokes theorem}.
\end{proof}
\subsection[Integration by parts]{Integration by parts}

This section is devoted to different formulations of the nonlocal integration by parts. As a difference to \cite[Th. 3.2]{BeCuMC22}, the main proof relies on the nonlocal Leibniz rule, Lemma \ref{lem: Divergencia no local producto}, and Divergence theorem, i.e, Theorem \ref{th: nonlocal divergence theorem}. As no function is assumed to have compact support in $\O$, two boundary terms that can be seen as an entangled one, appear. Such result will be later on extended by density to the nonlocal spaces $H^{s,p,\d}(\O)$. However, a bit of extra integrability of the functions on a neighbourhood of the boundary is required to complete the proof. Finally, a third formulation of the integration by parts shows how the boundary term can be rearranged as one over the double collar $\Gamma_{\pm \d}$, showing more resemblance to the classical case, but involving a principal value integral.

The results in this section remain valid for $\O$ unbounded as long as its boundary $\partial \O$ lies on a bounded domain. The case $\O=\Rn$ would be completely analogous to \cite[Lem. 2.6]{BeCuMC21}.

\begin{theo}[\textbf{Integration by parts 1}]\label{th: NL Integration by parts}
	Let $0\leq s<1$ and $0<\d$. Suppose that $u \in C^\infty_c(\Rn)$ and $\f \in C^\infty_c(\Rn,\Rn)$. Then 
	\begin{equation} \label{eq: nonlocal intergration by parts}
		\begin{split}
			\int_{\O}  D^s_\d u(x) \cdot \f(x) \, dx=& -\int_{\O} u(y) \diver_\d^s \f(y) \, dy \\
			&-	 c_{n,s} \int_{\Gamma_{\d}}\int_{\O} \frac{u(y)\f(x) + u(x)\f(y)}{|y-x|} \cdot \frac{y-x}{|y-x|}\frac{w_{\d}(x-y)}{|y-x|^{n-1+s}} \, dy \, dx.
		\end{split}
	\end{equation}
\end{theo}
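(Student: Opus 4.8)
The plan is to reduce Integration by Parts 1 to the Divergence Theorem (Theorem \ref{th: nonlocal divergence theorem}) via the nonlocal Leibniz rule for the divergence (Lemma \ref{lem: Divergencia no local producto}). Since both $u$ and $\f$ are smooth and compactly supported, the product $u\f$ lies in $C^\infty_c(\Rn,\Rn)$ and in particular in $H^{s,p,\d}(\O,\Rn)\cap L^q(\Gamma_{\pm\e},\Rn)$ for any admissible exponents, so the Divergence Theorem applies to $u\f$:
\begin{equation*}
	\int_\O \diver^s_\d(u\f)(x)\,dx = -c_{n,s}\int_{\Gamma_\d}\int_\O \frac{u(x)\f(x)+u(y)\f(y)}{|x-y|}\cdot\frac{x-y}{|x-y|}\frac{w_\d(x-y)}{|x-y|^{n+s-1}}\,dx\,dy.
\end{equation*}

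Next I would expand the left-hand side using \eqref{eq: NL Leibniz rule div}, namely $\diver^s_\d(u\f)(x)=u(x)\diver^s_\d\f(x)+K^{s,\d}_u(\f)(x)$, and integrate over $\O$. The key observation is that the operator term unfolds as
\begin{equation*}
	\int_\O K^{s,\d}_u(\f)(x)\,dx = c_{n,s}\int_\O\int_{B(x,\d)}\frac{u(x)-u(y)}{|x-y|^{n+s}}\f(y)\cdot\frac{x-y}{|x-y|}w_\d(x-y)\,dy\,dx,
\end{equation*}
which, after recognizing that the $u(x)\f(y)$ piece contributes (up to sign and the boundary correction) the term $\int_\O D^s_\d u(x)\cdot\f(x)\,dx$ — using the definition \eqref{eq: definition of nonlocal gradient} of $D^s_\d u$ together with the fact that $\int_\O u(x)\int_{B(x,\d)\setminus\O}\cdots$ accounts for the collar contribution — assembles the gradient term on the left of \eqref{eq: nonlocal intergration by parts} plus a $\Gamma_\d$-boundary term. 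Concretely, one splits $B(x,\d)=\O\cup(B(x,\d)\setminus\O)$ (note $B(x,\d)\setminus\O\subset\Gamma_\d$ for $x\in\O$), so that $D^s_\d u(x)=K^{s,\d}_u(\chi_\O\f)$-type integral over $\O$ plus an integral over $\Gamma_\d$; rearranging produces exactly the two boundary integrands $u(y)\f(x)$ and $u(x)\f(y)$ appearing in the statement.

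The bookkeeping is where the main care is needed: one must correctly match the four bilinear pieces $u(x)\f(x)$, $u(y)\f(y)$, $u(x)\f(y)$, $u(y)\f(x)$ arising from $\diver^s_\d(u\f)$ on one side and from $u\,\diver^s_\d\f$, $D^s_\d u\cdot\f$, and the two $\Gamma_\d$ terms on the other, keeping track of the odd symmetry of $\frac{x-y}{|x-y|}\frac{w_\d(x-y)}{|x-y|^{n+s-1}}$ that lets one symmetrize or antisymmetrize as needed, and of the principal values (which cause no trouble since the $\Gamma_\d$-integrals over $\O\times\Gamma_\d$ are absolutely convergent, the singularity $x=y$ being excluded). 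I would carry this out by writing $\int_\O\diver^s_\d(u\f)\,dx$ two ways, equating them, and solving algebraically for $\int_\O D^s_\d u\cdot\f\,dx$. The main obstacle is purely organizational — ensuring the $\Gamma_\d$ versus $\O$ splitting of the $y$-integration in $D^s_\d u$ and in $\diver^s_\d\f$ is consistent so that the interior $u(x)\f(x)$ and $u(y)\f(y)$ contributions cancel against those from the Leibniz expansion and only the stated collar terms survive; there is no analytic difficulty beyond the integrability already supplied by $\rho_\d\in L^1(\Rn)$ and Lemma \ref{lem: controlling the boundary term}.
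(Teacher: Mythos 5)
Your proposal is correct and uses exactly the same ingredients as the paper's proof — the nonlocal Leibniz rule (Lemma \ref{lem: Divergencia no local producto}), the Divergence Theorem applied to $u\f$, and the Fubini/odd-symmetry rearrangement that relates $\int_\O K^{s,\d}_u(\f)$ to $\int_\O D^s_\d u\cdot\f$ up to the two collar corrections over $\O\times\Gamma_\d$ and $\Gamma_\d\times\O$. The paper merely runs the argument in the other direction, starting from $\int_\O D^s_\d u\cdot\f\,dx$ and substituting, rather than equating two expressions for $\int_\O\diver^s_\d(u\f)\,dx$ and solving; the bookkeeping of the four bilinear pieces is identical.
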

\begin{proof}
	We want to use the nonlocal Leibniz formula, which means that we have to make the term $K^{s,\d}$ appears. Therefore, using Fubini's theorem and adding the necessary terms it yields
	\begin{align*}
		\int_{\O}  D^s_\d u(x) \cdot \f(x) \, dx=&c_{n,s}\int_{\O_\d} \int_{\O} \frac{u(x)-u(y)}{|x-y|} \frac{x-y}{|x-y|} \frac{w_{\d}(x-y)}{|x-y|^{n-1+s}} \cdot \f(x)\, dx \, dy \\
		=&\int_{\O} K_{u}^{s,\d} (\f)(y) \, dy 
		+c_{n,s}\int_{\Gamma_\d} \int_{\O} \frac{u(x)-u(y)}{|x-y|}\f(x)\cdot \frac{x-y}{|x-y|}\frac{w_{\d}(x-y)}{|x-y|^{n-1+s}} \, dx \, dy \\
		&	-c_{n,s}\int_{\O} \int_{\Gamma_\d} \frac{u(x)-u(y)}{|x-y|}\f(x)\cdot \frac{x-y}{|x-y|} \frac{w_{\d}(x-y)}{|x-y|^{n-1+s}} \, dx \, dy.
	\end{align*}
	If we use now the nonlocal Leibniz rule (Lemma \ref{lem: Divergencia no local producto}), so that we can substitute the term $K^{s,\d}_u$, we have that
	\begin{align*}
		\int_{\O} D^s_\d u(x) \cdot \f(x) \, dx=&-\int_{\O} u(y) \diver_\d^s \f(y) \, dy  +\int_{\O} \diver_\d^s (u\f)(y) \, dy \\
		&+c_{n,s}\int_{\Gamma_{\d}} \int_{\O} \frac{u(x)-u(y)}{|x-y|}\f(x)\cdot \frac{x-y}{|x-y|}\frac{w_{\d}(x-y)}{|x-y|^{n-1+s}} \, dx \, dy \\
		&	-c_{n,s}\int_{\O} \int_{\Gamma_{\d}} \frac{u(x)-u(y)}{|x-y|}\f(x)\cdot \frac{x-y}{|x-y|}\frac{w_{\d}(x-y)}{|x-y|^{n-1+s}} \, dx \, dy.
	\end{align*}
	By the nonlocal divergence theorem (Theorem \ref{th: nonlocal divergence theorem}),
	\begin{equation*}
		\int_{\O} \diver_\d^s (u\f)(y) \, dy=-c_{n,s}  \int_{\Gamma_{\d}} \int_{\O} \frac{ u(y)\f(y) + u(x)\f(x)}{|y-x|} \cdot \frac{y-x}{|y-x|}\frac{w_{\d}(y-x)}{|y-x|^{n-1+s}} \, dy \, dx.
	\end{equation*}
	Let us consider now the three boundary terms from the last two equations. In order to add them we unify the notation of the integration variables, thus, let $x$ corresponds to the integration variable over $\O_{B,\d}$ and $y$ the one corresponding to $\O$.
	If we add the three boundary terms from the last two equations we obtain that
	\begin{align*}
		&-\int_{\Gamma_{\d}} \int_{\O} \frac{ u(y)\f(y) + u(x)\f(x)}{|y-x|} \cdot \frac{y-x}{|y-x|}\frac{w_{\d}(y-x)}{|y-x|^{n-1+s}} \, dy \, dx\\
		&\qquad\qquad\qquad\qquad+\int_{\Gamma_{\d}} \int_{\O} \frac{u(y)-u(x)}{|y-x|}\f(y)\cdot \frac{y-x}{|y-x|}\frac{w_{\d}(y-x)}{|y-x|^{n-1+s}} \, dy \, dx \\
			&\qquad\qquad\qquad\qquad-\int_{\O} \int_{\Gamma_{\d}} \frac{u(x)-u(y)}{|x-y|}\f(x)\cdot \frac{x-y}{|x-y|} \frac{w_{\d}(x-y)}{|x-y|^{n-1+s}} \, dx \, dy\\
			&\qquad=-\int_{\Gamma_{\d}} \int_{\O} \frac{ u(y)\f(x) + u(x)\f(y)}{|y-x|} \cdot \frac{y-x}{|y-x|}\frac{w_{\d}(y-x)}{|y-x|^{n-1+s}} \, dy \, dx.
	\end{align*}
	Consequently,
	\begin{align*}
		\int_{\O}  D^s_\d u(x) \cdot \f(x) \, dx=& -\int_{\O} u(y) \diver_\d^s u(y) \, dy \\
		&- c_{n,s}	\int_{\Gamma_{\d}} \int_{\O} \frac{u(y)\f(x) + u(x)\f(y)}{|y-x|} \cdot \frac{y-x}{|y-x|}\frac{w_{\d}(x-y)}{|y-x|^{n-1+s}} \, dy \, dx.
	\end{align*}
\end{proof}
\begin{rem}
	As a consequence of Lemma \ref{lem: controlling the boundary term}, the integration order from the boundary term is interchangeable.
\end{rem}
As happened with the divergence theorem, we state the integration by parts in $H^{s,p,\d}(\O)$, but still requiring a little bit more of integrability of the functions on a neighbourhood of the boundary of $\O$.
\begin{theo}[\textbf{Integration by parts 2}] \label{th: NL integration by parts 2}
	Let $0\leq s<1$, $1<p<\infty$ and $\e>0$. Assume $u\in H^{s,p,\d}(\O)\cap L^{q_1}(\Gamma_{\pm\e},\Rn$) for some $q_1>\frac{1}{1-s}$ and $\f\in H^{s,p',\d}(\O,\Rn)\cap L^{q_2}(\Gamma_{\pm\e},\Rn$) for some $q_2>\frac{1}{1-s}$. Then, the following equality holds. 
	\begin{equation} \label{eq: nonlocal intergration by parts Hsp}
		\begin{split}
			\int_{\O}  D^s_\d u(x) \cdot \f(x) \, dx=& -\int_{\O} u(y) \diver_\d^s \f(y) \, dy \\
			&-	 c_{n,s} \int_{\Gamma_{\d}}\int_{\O} \frac{u(y)\f(x) + u(x)\f(y)}{|y-x|} \cdot \frac{y-x}{|y-x|}\frac{w_{\d}(y-x)}{|y-x|^{n-1+s}} \, dy \, dx.
		\end{split}
	\end{equation}
\end{theo}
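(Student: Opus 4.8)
\textbf{Proof proposal for Theorem \ref{th: NL integration by parts 2}.}
The plan is to extend the smooth identity \eqref{eq: nonlocal intergration by parts} from Theorem \ref{th: NL Integration by parts} to the pair $(u,\f)$ by a density-and-truncation argument, mirroring \emph{Part 2} of the proof of Theorem \ref{th: nonlocal divergence theorem}. First I would fix $0<r<\e$ and invoke Proposition \ref{prop sequence with mollifiers} (the mollification result) twice: to produce $\{u_j\}\subset C^\infty_c(\Rn)$ with $u_j\to u$ in $L^{q_1}(\Gamma^r)$, in $L^p(\O_{\d-r})$, and $D^s_\d u_j\to D^s_\d u$ in $L^p(\O_{-r})$; and $\{\f_j\}\subset C^\infty_c(\Rn,\Rn)$ with the analogous convergences with $p$ replaced by $p'$ and $q_1$ by $q_2$. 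Then I would apply the smooth integration by parts formula \eqref{eq: nonlocal intergration by parts} to $(u_j,\f_j)$ on the slightly shrunk domain $\O_{-r}$ (so that all interactions, reaching a distance $\d$, stay inside $\O_{\d-r}$ where convergence is controlled), obtaining an identity with a bulk term $\int_{\O_{-r}}D^s_\d u_j\cdot\f_j$, a bulk term $\int_{\O_{-r}}u_j\diver^s_\d\f_j$, and a boundary term over $(\O_{\d-r}\setminus\O_{-r})\times\O_{-r}$.

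Next I would pass to the limit $j\to\infty$ in each term. The two bulk terms converge by Hölder's inequality ($D^s_\d u_j\cdot\f_j$ pairs $L^p$ against $L^{p'}$ on $\O_{-r}$; $u_j\diver^s_\d\f_j$ similarly once one notes $\diver^s_\d\f_j=\tr D^s_\d\f_j$ componentwise, cf.\ \eqref{equality nonlocal gradient and divergence componentwise}, so $\diver^s_\d\f_j\to\diver^s_\d\f$ in $L^{p'}(\O_{-r})$). For the boundary term, I would split the integrand into the four pieces $u(y)\f(x)$, $u(x)\f(y)$ coming from the kernel, and bound each using Lemma \ref{lem: controlling the boundary term}: the operator $\mathcal{N}$ is bounded from $L^{p}(\O)\cap L^{q}(\Gamma_{\pm\e})$ into the relevant $L^r$-spaces on the collars, which precisely controls integrals such as $\int_{\O_{\d-r}\setminus\O_{-r}}|\f(x)|\,|\mathcal{N}u(x)|\,dx$ and $\int_{\O_{-r}}|u(y)|\,|\mathcal{N}\f(y)|\,dy$ by $C\|u\|_{H^{s,p,\d}}\|\f\|_{L^{p'}(\O_{\d-r})}$-type quantities, the extra integrability $q_1,q_2>\frac1{1-s}$ being exactly what makes $\mathcal{N}$ applicable near $\partial\O$. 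This gives convergence of the boundary term and hence the identity \eqref{eq: nonlocal intergration by parts Hsp} on $\O_{-r}$.

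Finally I would let $r\to0^+$. The bulk integrals over $\O_{-r}$ converge to those over $\O$ by dominated convergence (the integrands are fixed $L^1$ functions and $\chi_{\O_{-r}}\uparrow\chi_\O$). For the boundary term, one uses, as in Theorem \ref{th: nonlocal divergence theorem}, that $M_{s,q}(\O_{\d-r}\setminus\O_{-r})$, $M_{s,p_2'}(\O_{-r})$, $|\O_{-r}|$ and $|\O_{\d-r}\setminus\O_{-r}|$ are all bounded uniformly in $r\in(0,r_0)$ by their $r=0$ counterparts (via the dominated-convergence estimate recalled at the end of that proof), so the constants in Lemma \ref{lem: controlling the boundary term} do not blow up, and then dominated convergence on the collar product set $(\O_{\d-r}\setminus\O_{-r})\times\O_{-r}\uparrow\Gamma_\d\times\O$ finishes it. I expect the main obstacle to be the bookkeeping in the boundary term: one must verify that each of the four kernel pieces is genuinely controlled by $\mathcal{N}$ applied to the function that carries the right integrability (so that $u$ is always tested in $L^{q_1}$ or $L^p$ and $\f$ in $L^{q_2}$ or $L^{p'}$ on the correct collar), and that the pairing of $\mathcal{N}u\in L^{q}(\Gamma_\d)$-type spaces against $\f$ on the collar is admissible — this is where the hypotheses $q_1,q_2>\frac1{1-s}$ and the conjugacy $p'$ are used in a tightly coupled way, exactly as in the divergence theorem but now with two functions of different integrability to keep straight.
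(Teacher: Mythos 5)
Your proposal is correct and follows essentially the same route as the paper: mollify via Proposition \ref{prop sequence with mollifiers}, apply the smooth identity of Theorem \ref{th: NL Integration by parts} on the shrunk domain $\O_{-r}$, pass to the limit in $j$ by pairing $L^p$ against $L^{p'}$ in the bulk and rewriting the collar term as $\int[\f_j\mathcal{N}u_j+u_j\mathcal{N}\f_j]$ controlled by Lemma \ref{lem: controlling the boundary term}, and finally let $r\to0$ using the uniform bounds on $M_{s,q}$ and the measures. This is exactly the paper's argument, so no further comment is needed.
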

\begin{proof}
	Let $0<r<\e$, then, by Proposition \ref{prop sequence with mollifiers} we can take a sequence $\{u_j\}_{j\in \N}\subset C^{\infty}_c(\Rn)$ such that
	\begin{equation*}
		u_j \to u \text{ in } L^{q_1}(\Gamma_{\pm r}), \,\, u_j \to u \text{ in } L^p(\O_{\d-r}), \quad \text{ and } \quad D^s_\d u_j \to D^s_\d u \text{ in }  L^p(\O_{-r}, \Rn).
	\end{equation*}
	Similarly, we can take a sequence $\{\f_j\}_{j\in \N}\subset C^{\infty}_c(\Rn)$ such that
	\begin{equation*}
		\f_j \to \f \text{ in } L^{q_2}(\Gamma_{\pm r},\Rn), \,\, \f_j \to \f \text{ in } L^{p'}(\O_{\d-r},\Rn), \quad \text{ and } \quad D^s_\d \f_j \to D^s_\d \f \text{ in }  L^{p'}(\O_{-r}, \Rnn).
	\end{equation*}
	Next we apply Theorem \ref{th: NL Integration by parts} to the smooth functions $u_j$ and $\f_j$ on the domain $\O_{-r}$.
	\begin{equation} \label{eq: NL integration by parts for sequences}
			\begin{split}
			\int_{\O_{-r}}  D^s_\d u(x) \cdot \f(x) \, dx=& -\int_{\O_{-r}} u(y) \diver_\d^s \f(y) \, dy \\
			&-	 c_{n,s}\int_{\O_{\d-r}\backslash \O{-r}} \int_{\O} \frac{u(y)\f(x) + u(x)\f(y)}{|y-x|} \cdot \frac{y-x}{|y-x|}\frac{w_{\d}(x-y)}{|y-x|^{n-1+s}} \, dy \, dx.
		\end{split}
	\end{equation}
	As a consequence of the previous convergences we have
	\begin{equation}\label{eq:DsdivL1}
		D^s u_j \cdot \f_j \to D^s u \cdot \f \quad \text{and} \quad u_j \diver^s \f_j \to u \diver^s \f \qquad \text{in } L^1 (\O_{-r}) .
	\end{equation}
By Lemma \ref{lem: controlling the boundary term} we can write the boundary term as 
\begin{equation*}
	 \int_{\O_{\d-r} \backslash \O_{-r}}\left[ \f_j(x) \mathcal{N}u_j(x) +u_j(x) \mathcal{N}\f_j(x) \right] \, dx,
\end{equation*}
 where $\mathcal{N}$ is the operator defined in Definition~\ref{def: NL normal direction}.	In addition, thanks to the same Lemma we have that
	\begin{equation} \label{eq: boundary term convergences}
		\begin{split}
			&\mathcal{N}u_j \to \mathcal{N}u \quad \text{ in } L^q((\O_{\d-r} \backslash\O_{-r})\cap \Gamma_{\pm\e}) \cap L^p(\O_{\d-r}\backslash(\O_{-r}\cup \Gamma_{\pm\e}))  \quad \text{and} \\
			&\mathcal{N}\f_j \to \mathcal{N}\f \quad  \text{ in } L^q((\O_{\d-r} \backslash\O_{-r})\cap \Gamma_{\pm\e}) \cap L^{p'}(\O_{\d-r}\backslash(\O_{-r}\cup \Gamma_{\pm\e})) 
		\end{split}
	\end{equation}
	for every $q<\frac{1}{s}$. Hence, as the conjugate exponents of $q_1$ and $q_2$ verify $q_1'<\frac{1}{s}$ and $q_2'<\frac{1}{s}$,
	\begin{equation*}
		\f_j \mathcal{N}u_j \to \f \mathcal{N} u \quad \text{and} \quad u_j\mathcal{N}\f_j \to u \mathcal{N} \f \text{ in } L^1(\O_{\d-r} \backslash\O_{-r})
	\end{equation*}
since we can apply H\"older inequality on both sets, $(\O_{\d-r} \backslash\O_{-r})\cap \Gamma_{\pm\e}$ and $\O_{\d-r}\backslash(\O_{-r}\cup \Gamma_{\pm\e})$.
	As a consequence, we can take the limit in \eqref{eq: NL integration by parts for sequences} when $j$ goes to infinity.
	\begin{equation*}
		\begin{split}
			\int_{\O_{-r}}  D^s_\d u(x) \cdot \f(x) \, dx=& -\int_{\O_{-r}} u(y) \diver_\d^s \f(y) \, dy \\
			&- c_{n,s} 	\int_{\O_{\d-r}\backslash \O{-r}}\int_{\O} \frac{u(y)\f(x) + u(x)\f(y)}{|y-x|} \cdot \frac{y-x}{|y-x|}\frac{w_{\d}(x-y)}{|y-x|^{n-1+s}} \, dy \, dx.
		\end{split}
	\end{equation*}
	Finally, the statement follows taking $r\to 0$, as the constants from the inequalities in Lemma \ref{lem: controlling the boundary term}, $M_{s,q}(\O_{\d-r}\backslash \O_{-r})$,$M	_{s,p_2'}(\O_{-r})$, $|\O_{-r}|$, and $ |\O_{\d-r}\backslash\O_{-r}|$ can be bounded, respectively, by $M_{s,q}(\Gamma_{\d})$,	$M_{s,p_2'}(\O)$, $|\O|, \text{ and } |\Gamma_\d|$.
\end{proof}

\begin{rem}
    The additional integrability over $\Gamma_{\pm \d}$ is not required for functions with either compact support in $\O$ or the support of at least one of them is contained in $\O_{-\d}$.
\end{rem}

For a final formulation, we rearrange the boundary terms.
		\begin{theo}[\textbf{Integration by parts 3}] \label{th: nonlocal intergration by parts alt}
					Let $0\leq s<1$, $1<p<\infty$ and $\e>0$. Assume $u\in H^{s,p,\d}(\O)\cap L^{q_1}(\Gamma_{\pm\e},\Rn$) for some $q_1>\frac{1}{1-s}$ and $\f\in H^{s,p',\d}(\O,\Rn)\cap L^{q_2}(\Gamma_{\pm\e},\Rn$) for some $q_2>\frac{1}{1-s}$. Then
	\begin{equation} \label{eq: nonlocal intergration by parts alt}
 \int_{\O}  D^s_\d u(x) \cdot \f(x) \, dx=
 \int_{\Gamma_\d}\mathcal{N}\f(x)\,dx-\int_{\O_{-\d}} u(y) \diver^s_\d \f(y) \, dy 
	\end{equation}
			\end{theo}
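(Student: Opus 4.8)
The plan is to start from Integration by Parts 2 (Theorem \ref{th: NL integration by parts 2}) and show that the two entangled boundary terms over $\Gamma_\d\times\O$ can be rewritten, after restricting one variable appropriately, as the single term $\int_{\Gamma_\d}\mathcal N\f(x)\,dx$, at the cost of shrinking the domain of the volume term from $\O$ to $\O_{-\d}$. The key observation is that since $w_\d$ is supported in $B(0,\d)$, a point $y\in\O_{-\d}$ interacts only with points of $\O_\d$ whose distance from $\partial\O$ exceeds $0$; more precisely, $B(y,\d)\subset\O$ for $y\in\O_{-\d}$, so such $y$ do not see $\Gamma_\d$ at all. Conversely, the portion of $\O$ that does interact with $\Gamma_\d$ is exactly $\O\setminus\O_{-\d}=\Gamma_{-\d}$. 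This suggests splitting the volume integral $\int_\O u(y)\diver^s_\d\f(y)\,dy = \int_{\O_{-\d}} + \int_{\Gamma_{-\d}}$, and absorbing the $\Gamma_{-\d}$ piece into the boundary terms.

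First I would take the statement of Theorem \ref{th: NL integration by parts 2} and, using Lemma \ref{lem: controlling the boundary term} to justify all the Fubini swaps and to guarantee $\mathcal N u,\mathcal N\f$ are well-defined and integrable on the relevant collars, rewrite the double boundary term as $\int_{\Gamma_\d}\bigl[\f(x)\,\mathcal Nu(x)+u(x)\,\mathcal N\f(x)\bigr]\,dx$, exactly as was done inside the proof of Theorem \ref{th: NL integration by parts 2}. Next I would expand $\diver^s_\d\f(y)$ via its principal-value definition on $\Gamma_{-\d}$ and recognize, after a swap of integration order (legitimate by the $L^1$ bounds of Lemma \ref{lem: controlling the boundary term} and the odd symmetry handling of the principal value as in the proof of Theorem \ref{th: nonlocal divergence theorem}), that $\int_{\Gamma_{-\d}} u(y)\diver^s_\d\f(y)\,dy$ produces precisely a term of the form $-c_{n,s}\int_{\Gamma_{-\d}}\int_{\O_\d} u(y)\frac{\f(y)+\f(x)}{|x-y|}\cdot\frac{x-y}{|x-y|}\frac{w_\d(x-y)}{|x-y|^{n-1+s}}\,dx\,dy$, whose $x\in\Gamma_\d$ contribution is exactly $-\int_{\Gamma_\d}u(x)\mathcal N\f(x)\,dx$ together with a piece matching $-\int_{\Gamma_\d}\f(x)\mathcal Nu(x)\,dx$ after relabeling (this is where the entanglement and the symmetry of the kernel are used, in the same spirit as the combination of boundary terms in the proof of Theorem \ref{th: NL Integration by parts}). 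Collecting terms, the $\int_{\Gamma_\d}\f(x)\mathcal Nu(x)\,dx$ contributions cancel and one $\int_{\Gamma_\d}u(x)\mathcal N\f(x)\,dx$ survives with the opposite sign, but in \eqref{eq: nonlocal intergration by parts alt} it is $\int_{\Gamma_\d}\mathcal N\f(x)\,dx$ — so a careful sign/relabeling bookkeeping is needed here, and I would double-check whether the intended statement is literally $\int_{\Gamma_\d}\mathcal N\f(x)\,dx$ or whether an $u$-weight has been suppressed; in any case the mechanism is the same.

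The main obstacle will be the bookkeeping of the boundary terms: one must carefully track which variable ranges over $\O$ versus $\O_\d$ versus $\Gamma_\d$, handle the principal value consistently when $x$ and $y$ are close (only an issue on the overlap $\Gamma_\d\cap\O=\emptyset$, so actually the p.v.\ is harmless once the split is made — a point worth stating explicitly), and exploit the kernel's odd symmetry to merge the $u(y)\f(x)$ and $u(x)\f(y)$ cross-terms. All analytic subtleties (integrability, Fubini, density) are already furnished by Lemma \ref{lem: controlling the boundary term} and the proof of Theorem \ref{th: NL integration by parts 2}, so no new estimates are required — the content of this theorem is purely the algebraic rearrangement of boundary terms plus the geometric fact that $B(y,\d)\subset\O$ for $y\in\O_{-\d}$.
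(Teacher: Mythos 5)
Your proposal is correct and follows essentially the same route as the paper: start from Theorem \ref{th: NL integration by parts 2}, rewrite the entangled boundary term as $\int_{\Gamma_\d}\left[\f(x)\cdot\mathcal{N}u(x)+u(x)\mathcal{N}\f(x)\right]dx$, split $\int_\O u\,\diver^s_\d\f$ over $\O_{-\d}$ and $\Gamma_{-\d}$, expand $\diver^s_\d\f$ on $\Gamma_{-\d}$ by odd symmetry, and cancel the $\f\cdot\mathcal{N}u$ contributions after a Fubini swap (justified, as you say, by Lemma \ref{lem: controlling the boundary term} and the fact that points of $\O_{-\d}$ are at distance greater than $\d$ from $\Gamma_\d$). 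Your suspicion about the displayed formula is also well founded: carrying out the bookkeeping as in the paper's own proof, the surviving boundary term is $\int_{\Gamma_{\pm\d}}u(x)\mathcal{N}\f(x)\,dx$ (contributions from both $\Gamma_\d$ and $\Gamma_{-\d}$, weighted by $u$), which is consistent with how the theorem is subsequently applied in Corollary \ref{cor: divergence for laplacian} (where $u\equiv1$) and in Green's first identity; the statement as printed has dropped the $u$-weight and the inner collar.
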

		\begin{proof}
From Theorem \ref{th: NL integration by parts 2}, 
			\begin{equation} \label{eq: first Green identity 1}
				\begin{split}
					\int_{\O}  D^s_\d u(x) \cdot \f(x) \, dx=& -\int_{\O} u(y) \diver_\d^s \f(y) \, dy \\
					&\qquad- c_{n,s}	\int_{\Gamma_{\d}}u(x) \int_{\O} \frac{\f(y)}{|y-x|} \cdot \frac{y-x}{|y-x|}\frac{w_{\d}(y-x)}{|y-x|^{n-1+s}} \, dy \, dx\\
					&\qquad-	 c_{n,s}\int_{\Gamma_{\d}}\f(x) \int_{\O} \frac{u(y)}{|y-x|} \cdot \frac{y-x}{|y-x|}\frac{w_{\d}(y-x)}{|y-x|^{n-1+s}} \, dy \, dx.
				\end{split}
			\end{equation}
The first integral on the right-hand side may be written as
			\begin{equation*}
				\int_{\O}u(y)\diver^s_\d \f(y)\, dy =\int_{\O_{-\d}}u(y)\diver^s_\d \f(y)\, dy +\int_{\Gamma_{-\d}}u(y)\diver^s_\d \f(y)\, dy.
			\end{equation*}
		By Definition \ref{def: nonlocal gradient} and odd symmetry,
  \begin{equation} \label{eq: first Green identity 2}
			\begin{split}
				\int_{\Gamma_{-\d}}u(x)\diver^s_\d \f(x)\, dx
        =&-\int_{\Gamma_{-\d}}u(x) \pv_x\int_{\O} \frac{\f(y)}{|x-y|}\cdot\frac{x-y}{|x-y|}\frac{w_\d(x-y)}{|x-y|^{n-1+s}}\, dy\, dx\\
				&\qquad-\int_{\Gamma_{-\d}}u(x)\int_{\Gamma_{\d}}\frac{\f(y)}{|x-y|}\cdot\frac{x-y}{|x-y|}\frac{w_\d(x-y)}{|x-y|^{n-1+s}}\, dy\, dx.
			\end{split}
		\end{equation}
  Next, we use Lemma \ref{lem: controlling the boundary term}. Arguing as in the proof of Theorem \ref{th: NL integration by parts 2}, we deduce that
  \[
    x\mapsto u(x)\int_{\Gamma_{-\d}}\frac{\f(y)}{|x-y|}\cdot\frac{x-y}{|x-y|}\frac{w_\d(x-y)}{|x-y|^{n-1+s}}\, dy   
  \]
  is integrable over $\Gamma_{-\d}$, yielding
  \begin{multline} \label{eq: first Green identity 3}
      -\int_{\Gamma_{-\d}}u(x)\int_{\Gamma_{\d}}\frac{\f(y)}{|x-y|}\cdot\frac{x-y}{|x-y|}\frac{w_\d(x-y)}{|x-y|^{n-1+s}}\, dy\, dx \\
      =\int_{\Gamma_{\d}}\f(y)\int_{\Gamma_{-\d}}\frac{u(x)}{|x-y|}\cdot\frac{y-x}{|x-y|}\frac{w_\d(x-y)}{|x-y|^{n-1+s}}\, dy\, dx.
\end{multline}
  Putting together \eqref{eq: first Green identity 1}, \eqref{eq: first Green identity 2} and \eqref{eq: first Green identity 3} yields \eqref{eq: nonlocal intergration by parts alt}.
		\end{proof}

  \begin{rem}
   Rather than assuming the additional integrability over $\Gamma_{\pm \d}$, it is sufficient for there to exist $\{\f_j\}_{j\in \N} \subset C^{\infty}_c(\Rn)$ such that $\f_j \to \f$ in $H^{s,p',\d}(\O)$ and $\mathcal{N}(\f_j) \to \mathcal{N}(\f)$ in $L^{p'}(\Gamma_{\pm\d})$.
  \end{rem}

  \subsection[Nonlocal integration by parts for the $\curl^s_\d$]{Nonlocal integration by parts for the $\curl^s_\d$}

We obtain the following result applying Theorem \ref{th: nonlocal Stokes theorem} instead of Theorem \ref{th: nonlocal divergence theorem} and \eqref{eq: NL Leibniz rule curl} instead of \eqref{eq: NL Leibniz rule div} in Theorem \ref{th: NL Integration by parts} as well as writing cross product instead of dot product.
\begin{theo} \label{th: NL integration by parts curl}
	Let $0\leq s<1$, $1<p<\infty$, $\O \subset \R^3$ and $\e>0$. Assume $u\in H^{s,p,\d}(\O)\cap L^{q_1}(\Gamma_{\pm\e},\R^3$) for some $q_1>\frac{1}{1-s}$ and $\f\in H^{s,p',\d}(\O,\R^3)\cap L^{q_2}(\Gamma_{\pm\e},\R^3$) for some $q_2>\frac{1}{1-s}$. Then, the following equality holds. 
	\begin{equation} \label{eq: nonlocal intergration by parts Hsp}
		\begin{split}
			\int_{\O}  D^s_\d u(x) \times \f(x) \, dx=& -\int_{\O} u(y) \curl_\d^s \f(y) \, dy \\
			&-	 c_{n,s} \int_{\Gamma_{\d}}\int_{\O} \frac{u(y)\f(x) + u(x)\f(y)}{|y-x|} \times \frac{y-x}{|y-x|}\frac{w_{\d}(y-x)}{|y-x|^{2+s}} \, dy \, dx.
		\end{split}
	\end{equation}
\end{theo}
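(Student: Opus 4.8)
The plan is to follow exactly the template already established for Integration by Parts 2, Theorem~\ref{th: NL integration by parts 2}, simply replacing the dot product with the cross product wherever it appears, and substituting the curl-flavoured tools for their divergence counterparts. Concretely: first I would prove the identity for smooth functions $u\in C^\infty_c(\Rn)$ and $\f\in C^\infty_c(\R^3,\R^3)$, mimicking the proof of Theorem~\ref{th: NL Integration by parts}. Starting from the definition of $D^s_\d u$ and Fubini's theorem, one writes $\int_\O D^s_\d u(x)\times\f(x)\,dx$ as an integral over $\O_\d\times\O$, adds and subtracts the missing $u(y)\f(x)$ term so that the operator $K^{s,\d}_u(\f\times)$ from Lemma~\ref{Lema operador lineal delta} appears, plus two collar terms over $\Gamma_\d$. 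Then one invokes the nonlocal Leibniz rule in its curl form, equation~\eqref{eq: NL Leibniz rule curl}, to replace $K^{s,\d}_u(\f\times)$ by $\curl^s_\d(u\f)-u\,\curl^s_\d\f$, and finally applies the nonlocal Stokes theorem, Theorem~\ref{th: nonlocal Stokes theorem}, to rewrite $\int_\O\curl^s_\d(u\f)\,dx$ as a boundary integral over $\Gamma_\d$. Collecting the three boundary contributions (unifying notation so $x$ ranges over $\O_\d$ and $y$ over $\O$) and simplifying the numerators algebraically gives the combined term $-c_{n,s}\int_{\Gamma_\d}\int_\O \frac{u(y)\f(x)+u(x)\f(y)}{|y-x|}\times\frac{y-x}{|y-x|}\frac{w_\d(y-x)}{|y-x|^{2+s}}\,dy\,dx$, which is the asserted smooth-function identity.

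Next I would upgrade from smooth functions to $H^{s,p,\d}(\O)$ with the extra boundary integrability. Given $0<r<\e$, use Proposition~\ref{prop sequence with mollifiers} to select $\{u_j\}\subset C^\infty_c(\Rn)$ and $\{\f_j\}\subset C^\infty_c(\R^3,\R^3)$ with $u_j\to u$ in $L^{q_1}(\Gamma_{\pm r})$ and in $L^p(\O_{\d-r})$, with $D^s_\d u_j\to D^s_\d u$ in $L^p(\O_{-r})$, and analogously $\f_j\to\f$ in $L^{q_2}(\Gamma_{\pm r})$ and $L^{p'}(\O_{\d-r})$, with $D^s_\d\f_j\to D^s_\d\f$ in $L^{p'}(\O_{-r})$. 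Apply the smooth identity on the shrunken domain $\O_{-r}$. The volume terms converge in $L^1(\O_{-r})$ because $D^s u_j\times\f_j\to D^s u\times\f$ and $u_j\curl^s\f_j\to u\,\curl^s\f$ there (Hölder with conjugate exponents $p,p'$, exactly as in \eqref{eq:DsdivL1}). For the boundary terms, rewrite them via the operator $\mathcal N$ of Definition~\ref{def: NL normal direction} as $\int_{\O_{\d-r}\setminus\O_{-r}}\bigl[\f_j\wedge\mathcal Nu_j+u_j\,\mathcal N\f_j\bigr]$ in the appropriate cross-product sense, and invoke Lemma~\ref{lem: controlling the boundary term} (which needs $q_i'<\frac1s$, i.e.\ $q_i>\frac1{1-s}$) to obtain convergence of $\mathcal N u_j,\mathcal N\f_j$ in the relevant $L^q$ and $L^{p},L^{p'}$ spaces, hence $L^1$ convergence of the products after another Hölder step. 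Finally let $r\to0^+$; the constants $M_{s,q}(\O_{\d-r}\setminus\O_{-r})$, $M_{s,p'}(\O_{-r})$, $|\O_{-r}|$, $|\O_{\d-r}\setminus\O_{-r}|$ are bounded uniformly by their $r=0$ analogues (monotonicity of $M_{s,p'}$ in the set and dominated convergence for the volume factors, as in the proof of Theorem~\ref{th: nonlocal divergence theorem}), which delivers \eqref{eq: nonlocal intergration by parts Hsp}.

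The main obstacle, and the only place where any care beyond transcription is needed, is the algebraic bookkeeping of the three collar terms in the smooth step: one must check that with the cross product the cancellations occur exactly as in the dot-product case, i.e.\ that $-\bigl(u(y)\f(y)+u(x)\f(x)\bigr)+\bigl(u(y)-u(x)\bigr)\f(y)-\bigl(u(x)-u(y)\bigr)\f(x)$ collapses to $-\bigl(u(y)\f(x)+u(x)\f(y)\bigr)$ inside the bilinear slot. Since the cross product is bilinear, this identity is purely scalar and goes through verbatim; the antisymmetry of $\times$ plays no role because only the vector $\frac{y-x}{|y-x|}$ is fixed in the second slot throughout. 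A secondary point to verify is that Lemma~\ref{lem: controlling the boundary term}, stated for the scalar/vector normal operator $\mathcal N$, applies componentwise to the cross-product boundary expression — this is immediate since each Cartesian component of $u\f\times\frac{y-x}{|y-x|}$ is a linear combination of the scalar integrands already controlled there. With these two checks the proof is complete, and I would present it in the condensed form indicated before the statement, pointing to the proofs of Theorems~\ref{th: NL Integration by parts} and \ref{th: NL integration by parts 2} for the repeated details.
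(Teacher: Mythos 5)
Your proposal is correct and follows exactly the route the paper takes: the paper's ``proof'' of this theorem is the one-sentence remark preceding the statement, instructing the reader to repeat the arguments of Theorems \ref{th: NL Integration by parts} and \ref{th: NL integration by parts 2} with Theorem \ref{th: nonlocal Stokes theorem} in place of Theorem \ref{th: nonlocal divergence theorem}, \eqref{eq: NL Leibniz rule curl} in place of \eqref{eq: NL Leibniz rule div}, and cross products in place of dot products, which is precisely your plan (including the density step and the $r\to0^+$ limit). The only nit is a sign in your cancellation check: the third collar term carries the direction vector $\frac{x-y}{|x-y|}=-\frac{y-x}{|y-x|}$, so after aligning all three terms against the common slot $\frac{y-x}{|y-x|}$ the numerator combination is $-\bigl(u(y)\f(y)+u(x)\f(x)\bigr)+\bigl(u(y)-u(x)\bigr)\f(y)+\bigl(u(x)-u(y)\bigr)\f(x)=-\bigl(u(y)\f(x)+u(x)\f(y)\bigr)$, exactly as in the dot-product case.
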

Rearranging the boundary terms produces
  \begin{theo} \label{th: nonlocal intergration by parts curl alt}
					Let $0\leq s<1$, $1<p<\infty$, $\O \subset \R^3$ and $\e>0$. Assume $u\in H^{s,p,\d}(\O)\cap L^{q_1}(\Gamma_{\pm\e},\R^3$) for some $q_1>\frac{1}{1-s}$ and $\f\in H^{s,p',\d}(\O,\R^3)\cap L^{q_2}(\Gamma_{\pm\e},\R^3$) for some $q_2>\frac{1}{1-s}$. Then
				\begin{equation} 
				\begin{split}
					\int_{\O}  D^s_\d u(x) \times \f(x) \, dx=&-\int_{\O_{-\d}} u(y) \curl^s_\d \f(y) \, dy \\
					&\qquad- c_{n,s}	\int_{\Gamma_\d} u(x) \int_{\O} \frac{\f(y)}{|y-x|} \times \frac{y-x}{|y-x|}\frac{w_{\d}(y-x)}{|y-x|^{2+s}} \, dy \, dx \\
					&\qquad- c_{n,s}	\int_{\Gamma_{-\d}}u(x)\pv_x\int_{\O} \frac{\f(y)}{|y-x|} \times \frac{y-x}{|y-x|}\frac{w_{\d}(y-x)}{|y-x|^{2+s}} \, dy \, dx.
				\end{split}
				\end{equation}
			\end{theo}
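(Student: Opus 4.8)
The plan is to mirror the proof of Theorem~\ref{th: nonlocal intergration by parts alt}, replacing the dot product by the cross product throughout and starting from the curl integration by parts of Theorem~\ref{th: NL integration by parts curl} rather than from Theorem~\ref{th: NL integration by parts 2}. With $n=3$ that theorem writes $\int_\O D^s_\d u\times\f$ as
\[
-\int_{\O} u(y)\curl^s_\d\f(y)\,dy
- c_{n,s}\int_{\Gamma_\d}\!\int_{\O}\frac{u(y)\f(x)+u(x)\f(y)}{|y-x|}\times\frac{y-x}{|y-x|}\frac{w_\d(y-x)}{|y-x|^{2+s}}\,dy\,dx,
\]
so the whole proof amounts to redistributing these pieces into the three terms of the statement. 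The rearrangement is purely algebraic, hence it keeps the generality ($u\in H^{s,p,\d}(\O)\cap L^{q_1}(\Gamma_{\pm\e})$, $\f\in H^{s,p',\d}(\O,\R^3)\cap L^{q_2}(\Gamma_{\pm\e})$) already granted by Theorem~\ref{th: NL integration by parts curl}, and no new density argument is needed.

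First I would split $\int_\O u\,\curl^s_\d\f=\int_{\O_{-\d}}u\,\curl^s_\d\f+\int_{\Gamma_{-\d}}u\,\curl^s_\d\f$, the first summand being already the first term of the claim. On $\Gamma_{-\d}$ I would rewrite $\curl^s_\d\f$ by odd symmetry, exactly as in~\eqref{eq: first Green identity 2}: the $\f(x)$-contribution to $\curl^s_\d\f(x)$ equals $c_{3,s}\,\f(x)\times\pv_x\!\int_{B(x,\d)}\frac{x-y}{|x-y|^2}\frac{w_\d(x-y)}{|x-y|^{2+s}}\,dy$, which vanishes because $w_\d$ is radial, so that for $x\in\Gamma_{-\d}$
\[
\curl^s_\d\f(x)=c_{3,s}\,\pv_x\int_{B(x,\d)}\frac{\f(y)}{|x-y|}\times\frac{y-x}{|x-y|}\frac{w_\d(x-y)}{|x-y|^{2+s}}\,dy.
\]
Since $B(x,\d)\subset\O_\d=\O\cup\Gamma_\d$ and the ball $B\bigl(x,\dist(x,\partial\O)\bigr)$ lies inside $\O$, the principal value only involves the $\O$-part of $B(x,\d)$; splitting off the $\Gamma_\d$-part —\,a genuine, absolutely convergent integral over $\Gamma_{-\d}\times\Gamma_\d$ by Lemma~\ref{lem: controlling the boundary term}, which also licenses the Fubini manipulations below\,— produces exactly the third term of the statement together with one leftover double-collar integral over $\Gamma_{-\d}\times\Gamma_\d$.

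Next I would split the $\Gamma_\d\times\O$ boundary term coming from Theorem~\ref{th: NL integration by parts curl} into its $u(x)\f(y)$- and $u(y)\f(x)$-parts. The $u(x)\f(y)$-part is, verbatim, the second term of the statement, so it remains only to check that the $u(y)\f(x)$-part over $\Gamma_\d\times\O$ cancels the leftover $\Gamma_{-\d}\times\Gamma_\d$ integral from the previous step. After relabelling so that the $\Gamma_\d$-variable carries $\f$ in both, applying Fubini, and using that $w_\d$ is even (to align the sign of $\frac{y-x}{|y-x|}$), their sum equals $c_{n,s}\int_{\Gamma_\d}\int_{\O\setminus\Gamma_{-\d}}u(y)\,\f(x)\times\frac{x-y}{|x-y|^2}\frac{w_\d(x-y)}{|x-y|^{2+s}}\,dy\,dx=c_{n,s}\int_{\Gamma_\d}\int_{\O_{-\d}}(\cdots)$, which vanishes identically: for $x\in\Gamma_\d$ and $y\in\O_{-\d}$ one has $|x-y|\ge\dist(y,\partial\O)>\d$, hence $w_\d(x-y)=0$. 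Collecting the three surviving terms gives the claimed identity.

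The routine but error-prone part is the sign bookkeeping through the cross products, the reflections $\frac{y-x}{|y-x|}=-\frac{x-y}{|x-y|}$, and the successive relabellings of integration variables; the only genuinely structural point, and what I expect to be the crux, is the joint use of the support condition $\supp w_\d\subset B(0,\d)$ — which annihilates the $\Gamma_\d$--$\O_{-\d}$ interaction in the cancellation above — together with Lemma~\ref{lem: controlling the boundary term}, which guarantees that every collar integral split off from a principal value is absolutely convergent, so that Fubini's theorem applies at each step.
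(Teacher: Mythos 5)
Your proposal is correct and follows essentially the same route the paper intends: the paper gives no separate proof for the curl version, stating only that it follows from Theorem \ref{th: NL integration by parts curl} by "rearranging the boundary terms" exactly as in the proof of Theorem \ref{th: nonlocal intergration by parts alt}, and your argument reproduces that rearrangement — splitting $\int_\O u\,\curl^s_\d\f$ over $\O_{-\d}$ and $\Gamma_{-\d}$, rewriting the $\Gamma_{-\d}$ piece by odd symmetry, and cancelling the leftover $\Gamma_{-\d}\times\Gamma_\d$ integral against the $u(y)\f(x)$ collar term. Your explicit use of $\supp w_\d\subset B(0,\d)$ to kill the $\Gamma_\d$--$\O_{-\d}$ interaction is the same cancellation the paper performs in \eqref{eq: first Green identity 3} via Fubini, just made more transparent.
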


	\section[ Fundamental solution of the Laplacian $\Delta_\delta$]{Fundamental solution of the Laplacian $\Delta^s_\delta$} \label{se: NL fundamental solution}

Fundamental solutions to the diffusion operator provides a very practical tool in the analysis and study of partial differential equations. This section is devoted to showing existence of fundamental solutions for the nonlocal Laplacian $\Delta_\d^s$. Interestingly enough, such a fundamental solution behaves as the fundamental solution of the classical Laplacian at infinity and as the fundamental solution of the fractional Laplacian at the origin, so that, roughly speaking, it can be viewed as an interpolation of classical and fractional fundamental solutions.  

\begin{rem} \label{rem: classical fundamental solution}
	Let $\Phi$ be the fundamental solution of the classical Laplacian. We recall that
	\begin{itemize}
		\item If $n\geq 3$, $\Phi(x)=-\frac{\Gamma(\frac{n-2}{2})}{4\pi^{\frac{n}{2}}} \frac{1}{|x|^{n-2}}\, \, \, \, \, \, (=\frac{-1}{4\sqrt{\pi}} \frac{1}{|x|}\text{ if }n=3)$.
		\item If $n=2$, $\Phi(x)=\frac{1}{2\pi} \log |x|$.
		\item If $n=1$, $\Phi(x)=\frac{|x|}{2}$.
	\end{itemize}
\end{rem}
We again use $\sigma_{n-1}=2\pi^{\frac{n}{2}}/\Gamma(\frac{n}{2})=c_{n,-1}$ to denote the surface area of the unit sphere in $\Rn$.
\begin{theo} \label{th: NL Green formula Laplacian}
	There exists a fundamental solution of the nonlocal laplacian $\Delta_\d^s$, i.e., there exists a function $\Phi^s_\d \in C^{\infty}(\Rn \backslash \{0\})$  such that for each $u\in C^{\infty}_c(\Rn)$
	\begin{equation*}
		(\Delta_\d^s) \left( \Phi^s_\d *u \right)(x) =u(x)	\quad \text{ and } \quad \Delta^s_\d \Phi^s_\d=\d_0
	\end{equation*}
	in the sense of distributions, where $\d_0$ is the Dirac delta distribution. In addition, the following properties hold:
	\begin{enumerate}[a)]
		\item There exists a dimension dependent function $A_n\in C^\infty(\Rn)$ (actually, $A_n \in C_0(\Rn)$ for $n\geq 3$) such that
		\begin{equation*}
			\Phi^s_\d(x)= \begin{cases}
			A_n(x)-\frac{1}{a_0^2\gamma(2s) }\frac{1}{|x|^{n-2s}}& \text{ if } (n,s)\neq(1,\frac{1}{2}) \\
			A_1(x) + \frac{1}{a_0^2\pi }\log(|x|)& \text{ if } (n,s)=(1,\frac{1}{2}).
		\end{cases}
		\end{equation*}
		\item For any $R>0$, there exists $M>0$ such that for all $x \in B(0,R)\backslash\{0\}$,
		\[|\Phi^s_\d(x)| \leq \frac{M}{|x|^{n-2s}}.
		\]
		\item The following limit at infinity for $\Phi^s_\delta(x)$ holds: \[\lim_{|x|\to \infty} \frac{\Phi^s_\d(x)}{\Phi(x)}\in (0,+\infty).\]
		\item The Fourier transform, in the sense of distributions, of the distributional gradients of $\Phi^s_\d$ and $\nabla \Phi^s_\d$ are given by
		\begin{equation*}
			\widehat{\nabla \Phi^s_\d}(\xi)= -i\frac{\xi}{|\xi|}\frac{1}{|2\pi \xi|\widehat{Q_\d^s}(\xi)^2}=\frac{\widehat{V_\d^s}(\xi)}{\widehat{Q_\d^s}(\xi)} .
		\end{equation*}
		
	\end{enumerate}
\end{theo}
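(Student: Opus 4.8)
We argue entirely on the Fourier side. First I would identify the symbol of $\Delta^s_\d$: combining $D^s_\d u=\nabla(Q^s_\d*u)$ from Proposition~\ref{Prop: convolution with the classical gradient} with the componentwise identity $\diver^s_{\d,i}\f_i=D^s_{\d,i}\f_i$ of Remark~\ref{rem: nonlocal Schwartz theorem} gives, for $u\in C^\infty_c(\Rn)$,
\[
\Delta^s_\d u=\diver^s_\d(D^s_\d u)=\Delta\big(Q^s_\d*Q^s_\d*u\big),
\]
so by Corollary~\ref{cor: Fourier transform of the nonlocal gradient} the symbol of $\Delta^s_\d$ is $m(\xi):=-|2\pi\xi|^2\,\widehat{Q^s_\d}(\xi)^2$. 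I would then record the needed facts about $\widehat{Q^s_\d}$. Since $Q^s_\d\in L^1(\Rn)$ is supported in $B(0,\d)$, $\widehat{Q^s_\d}$ is real-analytic (Paley--Wiener) with all derivatives bounded, and $\widehat{Q^s_\d}(0)=\|Q^s_\d\|_{L^1(\Rn)}=:c_Q>0$. From $u=(D^s_\d u)*V^s_\d$ (Theorem~\ref{Th: nonlocal version of FTC}) and Corollary~\ref{cor: Fourier transform of the nonlocal gradient} one obtains $2\pi i\,(\xi\cdot\widehat{V^s_\d}(\xi))\,\widehat{Q^s_\d}(\xi)=1$ in $\mathcal S'$, whence $\widehat{Q^s_\d}(\xi)\neq0$ for $\xi\neq0$, hence $\widehat{Q^s_\d}>0$, and, using that $V^s_\d$ is vector radial, $\widehat{V^s_\d}(\xi)=\dfrac{-i\,\xi/|\xi|}{|2\pi\xi|\,\widehat{Q^s_\d}(\xi)}$. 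Finally, a short computation from Definition~\ref{de:q} and the structure of $w_\d$ gives $q_\d(0)=a_0$, so $Q^s_\d(x)=a_0I_{1-s}(x)+R(x)$, with $I_{1-s}(x)=\frac{1}{\gamma(1-s)|x|^{n-1+s}}$, $R\in C^\infty(\Rn)$, and $R(x)=-a_0I_{1-s}(x)$ for $|x|\ge\d$; hence $\partial^\alpha R\in L^1(\Rn)$ for $|\alpha|\ge1$, $\widehat R$ is rapidly decreasing, and since $\widehat{I_{1-s}}=|2\pi\xi|^{s-1}$,
\[
\widehat{Q^s_\d}(\xi)=a_0|2\pi\xi|^{s-1}+\widehat R(\xi)=a_0|2\pi\xi|^{s-1}\big(1+O(|\xi|^{-N})\big)\quad\text{as }|\xi|\to\infty,\ \text{every }N.
\]

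Next I would define $\Phi^s_\d:=\mathcal F^{-1}(1/m)$. The function $1/m(\xi)=-\big(|2\pi\xi|^2\widehat{Q^s_\d}(\xi)^2\big)^{-1}$ is real-analytic on $\Rn\setminus\{0\}$, is $O(|\xi|^{-2})$ near $0$ and $O(|\xi|^{-2s})$ at infinity, so it defines a tempered distribution (a locally integrable function when $n\ge3$; otherwise via a finite part at $\xi=0$ matched to the order-two vanishing of $m$). Since $m$ is smooth with all derivatives polynomially bounded, $\Delta^s_\d$ extends to $\mathcal S'$ as the Fourier multiplier $m(D)$, and $m\cdot(1/m)=1$ gives $\Delta^s_\d\Phi^s_\d=\d_0$; likewise, for $u\in C^\infty_c(\Rn)$, $\widehat{\Phi^s_\d*u}=\widehat u/m$, so $Q^s_\d*Q^s_\d*(\Phi^s_\d*u)=\Phi*u$ and therefore $\Delta^s_\d(\Phi^s_\d*u)=\Delta(\Phi*u)=u$.

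For the quantitative statements, set (for $(n,s)\neq(1,\tfrac12)$) $A_n:=\Phi^s_\d+\frac{1}{a_0^2\gamma(2s)|x|^{n-2s}}$, i.e.\ $\widehat{A_n}(\xi)=\widehat{\Phi^s_\d}(\xi)+\frac{1}{a_0^2}|2\pi\xi|^{-2s}$, using $\widehat{I_{2s}}=|2\pi\xi|^{-2s}$ with $I_{2s}(x)=\frac{1}{\gamma(2s)|x|^{n-2s}}$ (interpreted by analytic continuation in $2s$ when $2s\notin(0,n)$). The asymptotics of $\widehat{Q^s_\d}$ give $|2\pi\xi|^2\widehat{Q^s_\d}(\xi)^2=a_0^2|2\pi\xi|^{2s}(1+O(|\xi|^{-N}))$, hence $\widehat{A_n}(\xi)=O(|\xi|^{-2s-N})$ for every $N$ at infinity, while near $0$, $\widehat{A_n}(\xi)=-\frac{1}{c_Q^2|2\pi\xi|^2}(1+o(1))+O(|\xi|^{-2s})$, so $|\widehat{A_n}(\xi)|\le C|\xi|^{-2}$. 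Therefore $A_n\in C^\infty(\Rn)$: taking $\psi\in C^\infty_c(\Rn)$ with $\psi\equiv1$ near $0$, $\psi\widehat{A_n}$ is a compactly supported distribution, so $\mathcal F^{-1}(\psi\widehat{A_n})$ is real-analytic by Paley--Wiener--Schwartz, while $(1-\psi)\widehat{A_n}$ is smooth with all derivatives rapidly decreasing, so $\mathcal F^{-1}((1-\psi)\widehat{A_n})\in\mathcal S$; when $n\ge3$ one further has $\widehat{A_n}\in L^1(\Rn)$, so $A_n\in C_0(\Rn)$ by Riemann--Lebesgue. This is a) (the case $(1,\tfrac12)$ is identical after replacing $\frac{1}{a_0^2\gamma(2s)|x|^{n-2s}}$ by its degenerate limit $-\frac{1}{a_0^2\pi}\log|x|$, which is $\mathcal F^{-1}$ of $\frac{1}{a_0^2}|2\pi\xi|^{-1}$ on $\R$ up to an additive constant), and in particular $\Phi^s_\d\in C^\infty(\Rn\setminus\{0\})$. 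Part b) follows since $A_n$ is bounded on $B(0,R)$ and $|x|^{-(n-2s)}$ is bounded below there; for c), the $|\xi|^{-2}$ behaviour of $\widehat{\Phi^s_\d}$ at $\xi=0$ gives $\Phi^s_\d=c_Q^{-2}\Phi+(\text{a function decaying rapidly at }\infty)$, whence $\Phi^s_\d(x)/\Phi(x)\to c_Q^{-2}\in(0,\infty)$. Finally, for d), $\widehat{\nabla\Phi^s_\d}(\xi)=2\pi i\xi\,\widehat{\Phi^s_\d}(\xi)=-i\frac{\xi}{|\xi|}\frac{1}{|2\pi\xi|\,\widehat{Q^s_\d}(\xi)^2}$, which equals $\widehat{V^s_\d}(\xi)/\widehat{Q^s_\d}(\xi)$ by the formula for $\widehat{V^s_\d}$ recorded in the first step.

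The main obstacle is the asymptotic analysis of $\widehat{Q^s_\d}$ at infinity, i.e.\ pinning down the leading term $a_0|2\pi\xi|^{s-1}$ with the correct constant — this is precisely what produces the fractional-Laplacian behaviour of $\Phi^s_\d$ at the origin and the prefactor $\frac{1}{a_0^2}$ — and it rests on the splitting $Q^s_\d=a_0I_{1-s}+R$ with $R\in C^\infty(\Rn)$ whose derivatives decay integrably. A secondary difficulty is the distributional bookkeeping: making sense of $1/m$ and of the products $m\cdot(1/m)$ and $\widehat{I_{2s}}$ near $\xi=0$ in the low dimensions $n=1,2$, and isolating the degenerate case $(n,s)=(1,\tfrac12)$, where the Riesz kernel of order $2s$ collapses to a logarithm.
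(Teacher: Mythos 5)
Your construction is genuinely different from the paper's in its organization: you invert the scalar symbol $m(\xi)=-|2\pi\xi|^2\widehat{Q^s_\d}(\xi)^2$ directly and define $\Phi^s_\d=\mathcal F^{-1}(1/m)$, whereas the paper first builds the \emph{gradient} field $\varPsi^s_\d=\mathcal F^{-1}\bigl(-i\tfrac{\xi}{|\xi|}|2\pi\xi|^{-1}\widehat{Q^s_\d}(\xi)^{-2}\bigr)$ (Lemma \ref{le: candidate for the gradient of G} and Proposition \ref{prop: gradient of the NL Green formula}) and then recovers $\Phi^s_\d$ by integrating the radial representative $\bar\varPsi^s_\d$, with $A_n(x)=\int\bar T$. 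The paper's detour through the gradient is not cosmetic: the gradient symbol is only $O(|\xi|^{-1})$ at the origin, so it can be compared with the explicit tempered distribution $Y(\xi)=-i\xi/(2\pi|\xi|^2\widehat{Q^s_\d}(0)^2)$ in all dimensions, while your $1/m$ is $O(|\xi|^{-2})$ at the origin and fails to be locally integrable for $n\le 2$. Your parenthetical ``finite part matched to the order-two vanishing of $m$'' is the right idea (the f.p.\ ambiguity is a combination of $\delta_0$ and first-order derivatives thereof, all annihilated by the second-order zero of $m$, so $m\cdot\mathrm{f.p.}(1/m)=1$ and the ambiguity is absorbed into $A_n$ as a polynomial of degree $\le 1$), but it is asserted rather than carried out, and it is precisely the case split $n=1,2$ versus $n\ge3$ that the paper's gradient-first strategy is designed to avoid. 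Your identification of the key asymptotics $\widehat{Q^s_\d}(\xi)=a_0|2\pi\xi|^{s-1}+O(|\xi|^{-N})$ via the splitting $Q^s_\d=a_0I_{1-s}+R$ is exactly the mechanism the paper imports from \cite[Th.~5.9]{BeCuMC22}, and your derivation of d) and of the constant $c_Q^{-2}=\|Q^s_\d\|_{L^1}^{-2}$ in c) agrees with the paper.

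There is, however, one step whose justification is wrong as written. You claim that $(1-\psi)\widehat{A_n}$ is ``smooth with all derivatives rapidly decreasing, so $\mathcal F^{-1}((1-\psi)\widehat{A_n})\in\mathcal S$.'' This is false: differentiating $1/\widehat{Q^s_\d}(\xi)^2$ costs a factor $1/\widehat{Q^s_\d}\lesssim|\xi|^{1-s}$ each time, so each derivative of $\widehat{A_n}$ gains only a power $s$ of decay, not full rapid decay for every fixed order (this is exactly what Lemma \ref{le: bounds of G hat} quantifies, giving $|\partial^\a\widehat{\varPsi^s_\d}(\xi)|\lesssim|\xi|^{1-s(|\a|+2)}$). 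The correct, and still sufficient, statement is the one the paper uses: for each $N$ one has $\partial^\a[(1-\psi)\widehat{A_n}]\in L^1$ once $|\a|$ is large enough, so $x^\a\mathcal F^{-1}((1-\psi)\widehat{A_n})$ is bounded for those $\a$, which yields $C^\infty$ regularity (from the rapid decay of the undifferentiated symbol) together with faster-than-any-polynomial decay at infinity --- but not membership in $\mathcal S$. Since both the rapid decay of $A_n$'s smooth correction and the estimate $\Phi^s_\d-c_Q^{-2}\Phi=o(\Phi)$ underlying part c) rest on this point, you should replace the Schwartz-class claim by the ``enough integrable derivatives'' argument (your own observation that the derivative decay rate improves with the order is all that is needed). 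With that repair, and with the low-dimensional finite-part bookkeeping made explicit, your proof goes through.
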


\begin{cor} \label{cor: G in Hspd}
	Let $\Phi^s_\d$ be the function from Theorem \ref{th: NL Green formula Laplacian}, then, for every $R>0$ and $1\leq p<\frac{n}{n-s}$, we find $\Phi^s_\d \in H^{s,p,\d}(B(0,R))$ and
	\begin{equation*}
		D^s_\d \Phi^s_\d=V^s_\d.
	\end{equation*}
\end{cor}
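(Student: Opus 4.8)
The plan is to produce an explicit sequence in $C^\infty_c(\Rn)$ converging to $\Phi^s_\d$ in the $H^{s,p,\d}(B(0,R))$-norm, from which the identity $D^s_\d\Phi^s_\d=V^s_\d$ follows. Write $\O:=B(0,R)$, so $\O_\d=B(0,R+\d)$. First I would record the integrability. By Theorem \ref{th: NL Green formula Laplacian}\,b), $|\Phi^s_\d(x)|\le M|x|^{-(n-2s)}$ on $\O_\d$ (with the logarithmic, resp.\ bounded, modification when $n\le 2s$) and $\Phi^s_\d$ is smooth, hence bounded, away from $0$; since $s>0$ forces $p<\frac{n}{n-s}\le\frac{n}{n-2s}$, the singular part lies in $L^p_\loc$, so $\Phi^s_\d\in L^p(\O_\d)$. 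Likewise, Theorem \ref{Th: nonlocal version of FTC} gives $|V^s_\d(x)|\le M|x|^{-(n-s)}$ near $0$ with $V^s_\d$ smooth elsewhere, and the condition $(n-s)p<n$ is precisely the hypothesis $p<\frac{n}{n-s}$; hence $V^s_\d\in L^p_\loc(\Rn,\Rn)$, and this is exactly why the stated range of $p$ appears.

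The central step is the distributional identity $Q^s_\d*\nabla\Phi^s_\d=V^s_\d$ on $\Rn$. Since $Q^s_\d\in L^1(\Rn)$ is compactly supported, $\widehat{Q^s_\d}$ is a smooth, bounded Fourier multiplier and $\nabla(Q^s_\d*\Phi^s_\d)=Q^s_\d*\nabla\Phi^s_\d$ in $\mathcal{S}'$. Multiplying the identity of Theorem \ref{th: NL Green formula Laplacian}\,d) by $\widehat{Q^s_\d}$ and transforming back,
\[
Q^s_\d*\nabla\Phi^s_\d=\mathcal{F}^{-1}\!\big(\widehat{Q^s_\d}\,\widehat{\nabla\Phi^s_\d}\big)=\mathcal{F}^{-1}\!\big(\widehat{V^s_\d}\big)=V^s_\d,
\]
so in particular $Q^s_\d*\Phi^s_\d\in W^{1,p}_\loc(\Rn)$ with gradient $V^s_\d$.

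Finally I would set up the approximation. Choose $\psi\in C^\infty_c(\Rn)$ with $\psi\equiv1$ on $\overline{B(0,R+2\d)}$, let $g:=\psi\Phi^s_\d\in L^p(\Rn)$ (compactly supported, with $g=\Phi^s_\d$ on $B(0,R+2\d)$), and set $g_\e:=g*\eta_\e\in C^\infty_c(\Rn)$ for a standard mollifier $\eta_\e$. Then $g_\e\to g$ in $L^p(\Rn)$ and $g=\Phi^s_\d$ on $\O_\d$, so $g_\e\to\Phi^s_\d$ in $L^p(\O_\d)$. For $\e<\d$, Proposition \ref{Prop: convolution with the classical gradient} gives $D^s_\d g_\e=Q^s_\d*\nabla g_\e=(Q^s_\d*\nabla g)*\eta_\e$ (distributional gradient of $g$); since $\nabla g=\nabla\Phi^s_\d$ on $B(0,R+2\d)$ and $\supp Q^s_\d\subset B(0,\d)$, the central step shows $Q^s_\d*\nabla g=V^s_\d$ on $\O_\d$, whence $D^s_\d g_\e=V^s_\d*\eta_\e$ on $\O$, which converges to $V^s_\d$ in $L^p(\O)$ because $V^s_\d\in L^p_\loc$. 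Thus $(g_\e)$ is Cauchy in $H^{s,p,\d}(\O)$, its limit coincides with $\Phi^s_\d$ on $\O_\d$, so $\Phi^s_\d\in H^{s,p,\d}(\O)$; and by continuity of $u\mapsto D^s_\d u$ on this space, $D^s_\d\Phi^s_\d=\lim_\e D^s_\d g_\e=V^s_\d$ in $\O$.

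I expect the main obstacle to be the central step: turning the Fourier-side formula of Theorem \ref{th: NL Green formula Laplacian}\,d) into the genuine function-space identity $\nabla(Q^s_\d*\Phi^s_\d)=V^s_\d$, together with the localization that ensures every convolution appearing in the approximation only probes $\Phi^s_\d$ on the region where it agrees with the cut-off $g$ — so that the non-integrable tails of $\Phi^s_\d$ and $V^s_\d$ at infinity never interfere. The integrability bookkeeping of the first paragraph, though routine, is what pins down the exact exponent range $p<\frac{n}{n-s}$.
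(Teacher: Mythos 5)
Your argument is correct, and its first half (deriving $Q^s_\d*\nabla\Phi^s_\d=V^s_\d$ from the Fourier-side identity of Theorem \ref{th: NL Green formula Laplacian}) is exactly the engine the paper uses as well. Where you genuinely diverge is in how you convert that distributional identity into membership in $H^{s,p,\d}(B(0,R))$. The paper's proof is dual and abstract: it tests $\widehat{V^s_\d}=\widehat{Q^s_\d}\,\widehat{\nabla\Phi^s_\d}$ against Schwartz fields to obtain $\int V^s_\d\cdot\f=-\int\Phi^s_\d\,\diver^s_\d\f$, concludes that $V^s_\d$ is the \emph{weak} nonlocal gradient of $\Phi^s_\d$ in the sense of \cite[Def. 2.5, 2.7]{CuKrSc23}, checks the $L^p_{\loc}$ bounds exactly as you do, and then invokes the equivalence theorem \cite[Th. 2.8]{CuKrSc23} (weak definition $=$ closure-of-smooth-functions definition on bounded Lipschitz domains, applied to the ball) to land in $H^{s,p,\d}$. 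You instead build the approximating sequence by hand — cutoff on $B(0,R+2\d)$ plus mollification — and verify Cauchyness in the graph norm directly, exploiting $\supp Q^s_\d\subset B(0,\d)$ so that the convolutions never see the modified tails. Your route is longer but self-contained modulo Proposition \ref{Prop: convolution with the classical gradient} and elementary distribution calculus, and it sidesteps the external density/equivalence machinery of \cite{CuKrSc23}; the paper's route is shorter but outsources precisely that step. One small point worth making explicit in your last sentence: the assertion that ``$D^s_\d$ is continuous on this space'' (i.e., that the limit of $D^s_\d g_\e$ depends only on the $L^p(\O_\d)$-limit of $g_\e$ and not on the chosen sequence) is itself justified by $D^s_\d u=\nabla(Q^s_\d*u)$ from Proposition \ref{Prop: convolution with the classical gradient}, which is the closability statement your final identification quietly relies on.
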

\begin{proof}
	By Theorem \ref{th: NL Green formula Laplacian} \emph{e)} and since the Fourier transform defines an isomorphism on the space of tempered distributions $\mathcal{S}'$, we have that
	\begin{equation*}
		\int_{\Rn} V^s_\d (x) \cdot \f(x) \, dx =-\int_{\Rn} \Phi^s_\d (x)\diver_\d^s \f (x)\, dx \qquad \forall \f \in \mathcal{S}(\Rn,\Rn).
	\end{equation*}
If, in particular, we take $\f \in C^{\infty}_c(B(0,R)))$ we have that
\begin{equation*}
	\int_{B(0,R)} V^s_\d (x) \cdot \f(x) \, dx =-\int_{B(0,R+\d)} \Phi^s_\d (x)\diver_\d^s \f (x)\, dx \qquad \forall \f \in \mathcal{S}(\Rn,\Rn).
\end{equation*}
By \cite[Theorem 5.9 \emph{c)}]{BeCuMC22}, we have that there exists $M>0$ such that for every $x\in B(0,R)\backslash\{0\}$, 
\begin{equation} \label{eq: local bound for V}
	|V^s_\d(x)|\leq \frac{M}{|x|^{n-s}}.
\end{equation}
Therefore, $V^s_\d=D^s_\d \Phi^s_\d \in L^1_{loc}(\Rn,\Rn)$ and hence it satisfies the definition for a weak nonlocal gradient \cite[Definition 2.5]{CuKrSc23}. As, by \eqref{eq: bound for V}, $V^s_\d \in L^p_{\loc}(\Rn,\Rn)$ for every $p<\frac{n}{n-s}$ and by Theorem \ref{th: NL Green formula Laplacian} \emph{b)}, $\Phi^s_\d \in L^p_{\loc}(\Rn)$ for every $p<\frac{n}{n-2s}$, we can conclude that $\Phi^s_\d$ belong to the distributional definition of $H^{s,p,\d}(B(0,R))$ (\cite[Definition 2.7]{CuKrSc23}) for every $p<\frac{n}{n-s}$. Finally, since $B(0,R)$ is a bounded Lipschitz domain, by \cite[Theorem 2.8]{CuKrSc23} we have that $\Phi^s_\d \in H^{s,p,\d}(B(0,R))$ for every $p<\frac{n}{n-s}$ with 
\begin{equation*}
	D^s_\d \Phi^s_\d=V^s_\d.
\end{equation*}
\end{proof}

In order to prove Theorem \ref{th: NL Green formula Laplacian}, we first find a candidate, in a certain sense, for being the fundamental solution of $\Delta_\d^s$. More concretely, in the next lemma a characterization of the Fourier transform in the sense of distributions of the gradient of such a fundamental solution, provided it exists, is given. 

\begin{lem} \label{le: candidate for the gradient of G}
	A tempered distribution $\Phi^s_\d\in \mathcal{S}'(\Rn,\Rn)$ is the nonlocal fundamental solution of $\Delta^s_\d(\cdot)$ if and only if
	\begin{equation} \label{eq: Fourier transform of gradient of Gs}
		\widehat{\nabla \Phi^s_\d}(\xi)= -i\frac{\xi}{|\xi|}\frac{1}{|2\pi\xi|\widehat{Q^s_\d}(\xi)^2}.
	\end{equation}
\end{lem}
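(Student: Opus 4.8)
\textbf{Plan of the proof of Lemma \ref{le: candidate for the gradient of G}.}

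The strategy is to convert the defining property $\Delta^s_\d \Phi^s_\d = \d_0$ (in the sense of distributions) into a statement about Fourier transforms, using the fact that $\Delta^s_\d = \diver^s_\d D^s_\d$ decomposes into two first-order nonlocal operators whose symbols are already known. First I would recall, from Corollary \ref{cor: Fourier transform of the nonlocal gradient}, that for Schwartz functions $\widehat{D^s_\d u}(\xi) = 2\pi i \xi\, \widehat{u}(\xi)\cdot\widehat{Q^s_\d}(\xi)$, and observe that by the odd symmetry already used repeatedly (e.g. in Remark \ref{rem: nonlocal Schwartz theorem}, identity \eqref{equality nonlocal gradient and divergence componentwise}) the nonlocal divergence has symbol $\widehat{\diver^s_\d \f}(\xi) = 2\pi i\, \xi\cdot\widehat{\f}(\xi)\,\widehat{Q^s_\d}(\xi)$; composing these gives $\widehat{\Delta^s_\d u}(\xi) = -|2\pi\xi|^2\,\widehat{Q^s_\d}(\xi)^2\,\widehat{u}(\xi)$ for $u\in\mathcal{S}$. (Here one uses that $\widehat{Q^s_\d}$ is real and radial, since $Q^s_\d$ is radial by Lemma \ref{lem: kernel primitive}, so $\widehat{Q^s_\d}(\xi)=\widehat{Q^s_\d}(-\xi)$ and the product of the two symbols collapses to the scalar $-|2\pi\xi|^2\widehat{Q^s_\d}(\xi)^2$.) Transposing, for a tempered distribution $T$ one has $\widehat{\Delta^s_\d T}(\xi) = -|2\pi\xi|^2\widehat{Q^s_\d}(\xi)^2\,\widehat{T}(\xi)$, provided this product makes sense as a tempered distribution.

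Next, $\Phi^s_\d$ is a fundamental solution precisely when $\Delta^s_\d\Phi^s_\d = \d_0$, i.e. when $-|2\pi\xi|^2\widehat{Q^s_\d}(\xi)^2\,\widehat{\Phi^s_\d}(\xi) = \widehat{\d_0} = 1$. I would then take the gradient: $\widehat{\nabla\Phi^s_\d}(\xi) = 2\pi i\xi\,\widehat{\Phi^s_\d}(\xi)$, and substitute $\widehat{\Phi^s_\d}(\xi) = -1/(|2\pi\xi|^2\widehat{Q^s_\d}(\xi)^2)$ to obtain
\[
  \widehat{\nabla\Phi^s_\d}(\xi) = -\,\frac{2\pi i\xi}{|2\pi\xi|^2\,\widehat{Q^s_\d}(\xi)^2} = -i\,\frac{\xi}{|\xi|}\,\frac{1}{|2\pi\xi|\,\widehat{Q^s_\d}(\xi)^2},
\]
which is exactly \eqref{eq: Fourier transform of gradient of Gs}. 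Conversely, if a tempered distribution $\Phi^s_\d$ satisfies \eqref{eq: Fourier transform of gradient of Gs}, then multiplying by $|2\pi\xi|\,\widehat{Q^s_\d}(\xi)^2$ and contracting against $i\xi/|\xi|$ recovers $-|2\pi\xi|^2\widehat{Q^s_\d}(\xi)^2\widehat{\Phi^s_\d}(\xi) = 1$ up to the ambiguity of adding a distribution supported at the origin (a polynomial); one then argues that the decay bound available for $\Phi^s_\d$ — or, equivalently, the normalization already implicit in the statement — fixes this ambiguity, so that $\Delta^s_\d\Phi^s_\d = \d_0$.

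The main obstacle is making the symbol manipulation rigorous near $\xi = 0$ and at $\xi = \infty$: the multiplier $-|2\pi\xi|^2\widehat{Q^s_\d}(\xi)^2$ vanishes at the origin (so its reciprocal is singular there) and the Fourier transform of the distribution $\Phi^s_\d$ must be interpreted carefully as a tempered distribution rather than a function. Concretely one must check that $1/(|2\pi\xi|^2\widehat{Q^s_\d}(\xi)^2)$ is locally integrable near $0$ and of at most polynomial growth at $\infty$, so that it defines a tempered distribution whose inverse Fourier transform is a genuine candidate $\Phi^s_\d$; the behaviour of $\widehat{Q^s_\d}$ at $0$ and $\infty$ (which governs the fractional-at-the-origin and classical-at-infinity dichotomy highlighted before the theorem) is exactly what is needed, and the detailed estimates on $\widehat{Q^s_\d}$ are deferred to the proof of Theorem \ref{th: NL Green formula Laplacian}. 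For this lemma it suffices to record the equivalence at the level of symbols and to note that the non-uniqueness is killed by requiring $\Phi^s_\d$ to be the tempered distribution with no polynomial part, i.e. the one whose Fourier transform is the locally integrable function above; the pairing against test functions $\f\in\mathcal{S}$ used in Corollary \ref{cor: G in Hspd} then legitimizes all the transpositions above.
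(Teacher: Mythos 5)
Your overall strategy --- reduce $\Delta^s_\d=\diver^s_\d D^s_\d$ to the Fourier multiplier $-|2\pi\xi|^2\widehat{Q^s_\d}(\xi)^2$ and invert it --- is the same one the paper uses, and your symbol computation is correct. The genuine gap is in how you invert: you propose to solve for $\widehat{\Phi^s_\d}$ itself, setting $\widehat{\Phi^s_\d}(\xi)=-1/(|2\pi\xi|^2\widehat{Q^s_\d}(\xi)^2)$ and then multiplying by $2\pi i\xi$. Since $\widehat{Q^s_\d}(0)=\|Q^s_\d\|_{L^1}>0$, this function has an exact $|\xi|^{-2}$ singularity at the origin, so the local-integrability check you defer to "the behaviour of $\widehat{Q^s_\d}$ at $0$" \emph{fails} for $n=1,2$: there is no canonical tempered distribution given by that formula, and the "no polynomial part" normalization you invoke does not select one (this is the same reason $\log|x|$ and $|x|/2$, rather than negative powers, appear as classical fundamental solutions in low dimensions). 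The paper avoids this entirely by never writing $\widehat{\Phi^s_\d}$: it works with the vector field $\nabla\Phi^s_\d$ throughout. Concretely, $\Delta^s_\d\Phi^s_\d=\diver(Q^s_\d*Q^s_\d*\nabla\Phi^s_\d)=\d_0=\Delta\f=\diver\nabla\f$ with $\f$ the classical fundamental solution, one identifies $Q^s_\d*Q^s_\d*\nabla\Phi^s_\d=\nabla\f$, and then divides only by $\widehat{Q^s_\d}(\xi)^2$, which is smooth and strictly positive everywhere; the known formula $\widehat{\nabla\f}(\xi)=-i\tfrac{\xi}{|\xi|}\tfrac{1}{|2\pi\xi|}$ (valid in all dimensions, as a principal value when $n=1$) then gives \eqref{eq: Fourier transform of gradient of Gs} directly.

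Your converse has the same avoidable detour: you try to recover $\widehat{\Phi^s_\d}$ from $\widehat{\nabla\Phi^s_\d}$, which reintroduces the ambiguity at the origin you then have to argue away. No such step is needed: since $\Delta^s_\d\Phi^s_\d=\diver^s_\d(Q^s_\d*\nabla\Phi^s_\d)$ depends on $\Phi^s_\d$ only through $\nabla\Phi^s_\d$, one simply computes
\begin{equation*}
\mathcal{F}\bigl(\diver^s_\d D^s_\d\Phi^s_\d\bigr)(\xi)
=2\pi i\,\xi\,\widehat{Q^s_\d}(\xi)^2\cdot\widehat{\nabla\Phi^s_\d}(\xi)
=2\pi i\,\xi\cdot\Bigl(-i\tfrac{\xi}{|\xi|}\Bigr)\tfrac{1}{|2\pi\xi|}=1=\widehat{\d_0},
\end{equation*}
with no division by a vanishing symbol at all. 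I recommend reformulating your argument so that $\widehat{\Phi^s_\d}$ never appears and only the (everywhere nonvanishing) factor $\widehat{Q^s_\d}^2$ is ever inverted.
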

\begin{proof}
	On the one hand, we assume firsty that $\Phi^s_\d$ is a fundamental solution of $\Delta^s_\d(\cdot)$. In such case, we apply \cite[Proposition 4.3]{BeCuMC22} and \cite[Lemma 4.2]{BeCuMC22b} by duality on test functions so we can write
	\begin{equation*}
		\delta_0=\Delta_\d^s \Phi^s_\d  = \diver_\d^s(D_\d^s \Phi^s_\d)=\diver (Q^s_\d * Q^s_\d * \nabla \Phi^s_\d) 
	\end{equation*}
	in the sense of distributions.  (Recall that the convolution by $Q^s_\d$ and multiplication by $\widehat{Q^s_\d}$ is well defined in the sense of distributions \cite[Lemma B.2]{BeCuMC22}.) Given that $\d_0= \Delta \f$ with $\f$ the fundamental solution of the classical Laplacian (see Remark \ref{rem: classical fundamental solution}) we can identify 
	\begin{equation} \label{eq: identification of the gradient of G}
		Q^s_\d * Q^s_\d * \nabla \Phi^s_\d=\nabla \f.
	\end{equation}
	If we take the Fourier transform on \eqref{eq: identification of the gradient of G}, (by \cite{BeCuMC22} we have that the convolution by $Q^s_\d$ and multiplication by $\widehat{Q^s_\d}$ is well defined in a distributional sense), we have that $\widehat{\nabla \Phi^s_\d}$ needs to satisfy
	\begin{equation*}
		\widehat{\nabla \Phi^s_\d}(\xi)=\frac{\widehat{\nabla \f}(\xi)}{\widehat{Q^s_\d}(\xi)^2}= -i\frac{\xi}{|\xi|}\frac{1}{|2\pi \xi|\widehat{Q^s_\d}(\xi)^2}
	\end{equation*}
	where the Fourier transform of $\nabla \f$ is given by \cite[Lemma B.1]{BeCuMC22}.
	
	On the other hand, assuming that \eqref{eq: Fourier transform of gradient of Gs} holds, we see that, using \cite[Proposition 4.3]{BeCuMC22} by duality
	\begin{equation*}
		\diver^s_\d D^s_\d \Phi^s_\d= \diver^s_\d (Q^s_\d * \nabla \Phi^s_\d).
	\end{equation*}
	Applying now Fourier transform it yields
	\begin{equation*}
		\mathcal{F}\left(\diver^s_\d D^s_\d \Phi^s_\d\right)(\xi)=2\pi i \xi \widehat{Q^s_\d}(\xi) \cdot \widehat{Q^s_\d}(\xi)\cdot\widehat{\nabla \Phi^s_\d}(\xi)=2\pi \xi\widehat{Q^s_\d}(\xi)^2\frac{\xi}{|\xi|}\frac{1}{|2\pi\xi|\widehat{Q^s_\d}(\xi)^2}=1=\widehat{\d_0}.
	\end{equation*}
    Recalling that the Fourier transform is bijective over the space of tempered distributions, we conclude that $\diver^s_\d D^s_\d \Phi^s_\d=\d_0$.
\end{proof}

The next proposition is fundamental to the proof of Theorem \ref{th: NL Green formula Laplacian}. It establishes the existence of $\varPsi^s_\d$ with Fourier transform coincides with that of $\nabla \Phi^s_\d$ in the previous lemma. Moreover, several essential properties on $\varPsi^s_\d$ are provided. 

\begin{prop}\label{prop: gradient of the NL Green formula}
	There exists a vector field $\varPsi^s_\d \in C^{\infty}(\Rn \backslash \{0\}, \Rn)$ such that
	\begin{equation*}
		\widehat{\varPsi^s_\d}(\xi)= -i\frac{\xi}{|\xi|}\frac{1}{|2\pi \xi|\widehat{Q}_\d^s(\xi)^2}
	\end{equation*}
	in the sense of distributions. Moreover: 
\begin{enumerate}[a)]
		\item\label{item:a} For each $x \in \Rn \setminus \{0\}$, 
	\begin{equation} \label{eq: gradient G behaviour at infinity}
		\lim_{\lambda \to \infty} \lambda^{n-1}  \varPsi^s_\d(\lambda x) = \frac{c_{n,-1}}{\|Q_\d^s\|_{L^1}^2} \frac{x}{|x|^{n}}.
	\end{equation}
  
		\item\label{item:b} There exists  $T\in C^\infty(\Rn,\Rn)$ bounded vector radial function (actually, $T \in C_0(\Rn,\Rn)$ for $n\geq 2$) such that
			\begin{equation*}
			\varPsi^s_\d(x)= \begin{cases}
				T(x) + \pv \frac{n-2s}{a_0^2\gamma(2s) |x|^{n+1-2s}}\frac{x}{|x|}& \text{ if } (n,s)\neq (1,\frac{1}{2}) \\
				T(x) + \frac{1}{a_0^2\pi} \pv \frac{1}{x}& \text{ if } (n,s)= (1,\frac{1}{2}).
			\end{cases}
		\end{equation*}
	
		\item\label{item:c} For any $R>0$ there exists $M>0$ such that for all $x \in B(0,R)\backslash\{0\}$
		\[|\varPsi^s_\d(x)| \leq \frac{M}{|x|^{n+1-2s}}.\]
	\end{enumerate}
 
\end{prop}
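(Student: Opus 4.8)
The plan is to \emph{define} $\varPsi^s_\d$ as the inverse Fourier transform of the prescribed symbol and then extract the three properties from the behaviour of $\widehat{Q^s_\d}$ at the two ends of the frequency spectrum. The crucial input is the explicit structure of $Q^s_\d$ near the origin: by Definition~\ref{de:q} and the hypotheses on $w_\d$, one has $\bar q_\d(t)=a_0+c_1 t^{n+s-1}$ \emph{exactly} for $t\le b_0\d$ (since $\bar w_\d\equiv a_0$ there, the defining integral is elementary), with $c_1\in\R$; equivalently $Q^s_\d(x)=a_0 I_{1-s}(x)+\frac{c_1}{\gamma(1-s)}$ for $|x|\le b_0\d$, where $I_{1-s}(y)=\frac{1}{\gamma(1-s)}|y|^{1-s-n}$ is the classical Riesz potential. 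Hence $Q^s_\d-a_0 I_{1-s}$ extends to a function of class $C^\infty(\Rn)$ with all derivatives bounded (locally constant near $0$, smooth and compactly supported elsewhere), so that, setting $R:=\widehat{Q^s_\d}-a_0|2\pi\cdot|^{-(1-s)}$, the function $R$ is real-analytic on $\Rn\setminus\{0\}$, satisfies $|\partial^\alpha R(\xi)|\le C_{\alpha,N}|\xi|^{-N}$ for $|\xi|\ge1$ and every $N$, and $R(\xi)=O(|\xi|^{s-1})$ as $\xi\to0$ (its singularity at the origin coming from $a_0|2\pi\xi|^{-(1-s)}$, since $\widehat{Q^s_\d}$ itself is smooth at $0$, even, with $\widehat{Q^s_\d}(0)=\|Q^s_\d\|_{L^1}$ and $\widehat{Q^s_\d}(\xi)=\|Q^s_\d\|_{L^1}+O(|\xi|^2)$). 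I will also use the positivity $\widehat{Q^s_\d}>0$ on $\Rn$ established in \cite{BeCuMC22}. Consequently $m(\xi):=\big(|2\pi\xi|\,\widehat{Q^s_\d}(\xi)^2\big)^{-1}$ is well defined and $C^\infty$ on $\Rn\setminus\{0\}$, with $m(\xi)\sim\big(|2\pi\xi|\,\|Q^s_\d\|_{L^1}^2\big)^{-1}$ as $\xi\to0$ and $m(\xi)\sim a_0^{-2}|2\pi\xi|^{1-2s}$ as $|\xi|\to\infty$; thus $-i\frac{\xi}{|\xi|}m(\xi)$ is a tempered distribution and I set $\varPsi^s_\d:=\mathcal F^{-1}\!\big(-i\frac{\xi}{|\xi|}m\big)\in\mathcal S'(\Rn,\Rn)$, which is vector radial because $\widehat{Q^s_\d}$ is radial.

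For \emph{\ref{item:b})} I would subtract the homogeneous high-frequency part. Put $\Lambda(\xi):=-i\frac{\xi}{|\xi|}a_0^{-2}|2\pi\xi|^{1-2s}$; by the classical Fourier transform of homogeneous distributions (as in \cite[Lem.~B.1]{BeCuMC22} and \cite{Grafakos08a}), $\Lambda=-a_0^{-2}\widehat{\nabla I_{2s}}$ with $I_{2s}(y)=\frac{1}{\gamma(2s)}|y|^{2s-n}$ when $(n,s)\ne(1,\tfrac12)$, whence $\mathcal F^{-1}\Lambda=-a_0^{-2}\nabla I_{2s}=\pv\,\frac{n-2s}{a_0^2\gamma(2s)\,|x|^{n+1-2s}}\frac{x}{|x|}$, while for $(n,s)=(1,\tfrac12)$ the symbol is $-ia_0^{-2}\sgn(\xi)$, whose inverse transform is $\frac{1}{a_0^2\pi}\pv\frac1x$. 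Setting $T:=\varPsi^s_\d-\mathcal F^{-1}\Lambda$, one has $\widehat T=-i\frac{\xi}{|\xi|}\big(m-a_0^{-2}|2\pi\xi|^{1-2s}\big)$, and the identity $a_0-|2\pi\xi|^{1-s}\widehat{Q^s_\d}(\xi)=-|2\pi\xi|^{1-s}R(\xi)$, read off from $\widehat{Q^s_\d}=a_0|2\pi\cdot|^{-(1-s)}+R$, gives
\[
m(\xi)-\frac{|2\pi\xi|^{1-2s}}{a_0^2}
=\frac{-|2\pi\xi|^{1-s}R(\xi)\,\big(a_0+|2\pi\xi|^{1-s}\widehat{Q^s_\d}(\xi)\big)}{a_0^2\,|2\pi\xi|\,\widehat{Q^s_\d}(\xi)^2}.
\]
Since $R$ and all its derivatives decay faster than any polynomial at infinity and $\widehat{Q^s_\d}>0$, the right-hand side --- hence $\widehat T$ together with all its derivatives --- decays faster than any polynomial as $|\xi|\to\infty$, while near $\xi=0$ one has $\widehat T(\xi)=O(|\xi|^{-1})$, as $a_0^{-2}|2\pi\xi|^{1-2s}$ is strictly less singular there than $m(\xi)$. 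For $n\ge2$ this makes $\xi^\beta\widehat T\in L^1(\Rn)$ for every multi-index $\beta$, so $T\in C^\infty(\Rn,\Rn)$, and, as the inverse transform of the $L^1$ function $\widehat T$, $T\in C_0(\Rn,\Rn)$; for $n=1$ the weighted estimates still hold for every $\beta\ne0$, giving $T\in C^\infty(\R,\R)$, and $T$ is bounded because $\widehat T$ splits into a $\pv$-type piece (with bounded, sign-type inverse transform) plus an $L^1$ function. Radiality of $T$ comes from that of $\widehat{Q^s_\d}$, and $\varPsi^s_\d=T+\mathcal F^{-1}\Lambda\in C^\infty(\Rn\setminus\{0\},\Rn)$ since both summands are. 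This establishes \emph{\ref{item:b})}.

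Parts \emph{\ref{item:c})} and \emph{\ref{item:a})} then follow from the decomposition. For \emph{\ref{item:c})}, on $B(0,R)\setminus\{0\}$ the term $\mathcal F^{-1}\Lambda$ is bounded by $M|x|^{-(n+1-2s)}$ (respectively $M|x|^{-1}|\log|x||\le M'|x|^{-1}$ when $(n,s)=(1,\tfrac12)$, where $n+1-2s=1$), while $T$ is bounded and any bounded quantity is $\le M''|x|^{-(n+1-2s)}$ on a bounded punctured ball; summing gives the estimate. For \emph{\ref{item:a})}, the far field of $\varPsi^s_\d$ is controlled by $\widehat{\varPsi^s_\d}$ near $\xi=0$: from $\widehat{Q^s_\d}(\xi)^2=\|Q^s_\d\|_{L^1}^2+O(|\xi|^2)$ one gets $\widehat{\varPsi^s_\d}(\xi)=\|Q^s_\d\|_{L^1}^{-2}\,\widehat{\nabla\Phi}(\xi)+\varrho(\xi)$, where $\widehat{\nabla\Phi}(\xi)=-i\frac{\xi}{|\xi|}|2\pi\xi|^{-1}$ ($\Phi$ the classical fundamental solution of Remark~\ref{rem: classical fundamental solution}) and $\varrho(\xi)=O(|\xi|)$ near $0$. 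Splitting $\varrho=\chi\varrho+(1-\chi)\varrho$ with $\chi$ a smooth cutoff supported near $0$: $\mathcal F^{-1}(\chi\varrho)$ decays like $|x|^{-(n+1)}$ at spatial infinity (a compactly supported symbol with an $|\xi|$-type singularity at the origin), and $\mathcal F^{-1}((1-\chi)\varrho)$, whose symbol is smooth on $\Rn$ and, by \emph{\ref{item:b})}, agrees away from the origin with $\varPsi^s_\d$ minus a smooth symbol, decays rapidly at spatial infinity; hence $\varPsi^s_\d(x)=\|Q^s_\d\|_{L^1}^{-2}\nabla\Phi(x)+O(|x|^{-(n+1)})$ as $|x|\to\infty$. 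Since $\nabla\Phi(x)=c_{n,-1}\,\frac{x}{|x|^n}$ is homogeneous of degree $1-n$, passing to the limit in $\lambda^{n-1}\varPsi^s_\d(\lambda x)$ yields exactly \eqref{eq: gradient G behaviour at infinity}.

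I expect the first paragraph to be the crux. The precise constant $a_0$ in the singular term and the \emph{rapid} (faster-than-polynomial) decay of $R$ at infinity are what force one to exploit the closed form of $\bar q_\d$ near the origin rather than mere order-of-magnitude bounds; once $\widehat{Q^s_\d}=a_0|2\pi\cdot|^{-(1-s)}+R$ with $R$ rapidly decreasing is in hand, parts \emph{\ref{item:a})}--\emph{\ref{item:c})} are routine Fourier analysis, the only other nontrivial ingredient being the pointwise positivity of $\widehat{Q^s_\d}$ borrowed from \cite{BeCuMC22}.
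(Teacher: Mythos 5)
Your proposal is correct and follows essentially the same route as the paper: define $\varPsi^s_\d$ via its symbol, compare with $Y(\xi)=-i\xi(2\pi|\xi|^2\widehat{Q^s_\d}(0)^2)^{-1}$ near the origin to get the far-field limit in \emph{a)}, and subtract the homogeneous symbol $-i\frac{\xi}{|\xi|}a_0^{-2}|2\pi\xi|^{1-2s}$ to get the decomposition in \emph{b)}, with \emph{c)} following from \emph{b)}. The one substantive difference is that you rederive the crux --- $\widehat{Q^s_\d}=a_0|2\pi\cdot|^{-(1-s)}+R$ with $R$ and all its derivatives rapidly decreasing --- directly from the closed form of $\bar q_\d$ on $[0,b_0\d]$, whereas the paper imports this from \cite[Th.\ 5.9]{BeCuMC22} together with the appendix Lemmas \ref{lem: behaviour around zero} and \ref{le: bounds of G hat}; your derivation is more self-contained, though note that $Q^s_\d-a_0I_{1-s}$ is \emph{not} compactly supported (it equals $-a_0I_{1-s}$ outside $B(0,\d)$) --- what saves the argument is that its derivatives of order $\ge1$ are integrable. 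Two further harmless overstatements: for $n=1$ the symbol is $O(|\xi|^{-1})$ at the origin and must be read as a principal value (odd symmetry) to be a tempered distribution, and $\mathcal F^{-1}(\chi\varrho)$ decays like $|x|^{-n}$ rather than $|x|^{-(n+1)}$ by the stated method, which is still $o(|x|^{1-n})$ and hence sufficient for \eqref{eq: gradient G behaviour at infinity}.
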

\begin{proof}
	We first show the existence of $\varPsi^s_\d$ as a distribution. We a little abuse of notation, we call 
	\[\widehat{\varPsi^s_\d}(\xi)= -i\frac{\xi}{|\xi|}\frac{1}{|2\pi \xi|\widehat{Q_\d^s}(\xi)^2}
	.\]
	Having in mind that Fourier transform is a bijection from the space of tempered distributions onto itself, it is enough to show that $\widehat{\varPsi^s_\d}$ is a tempered distribution. To this end, we compare it with the tempered distribution (see \cite[Lemma A.1 \emph{b)}]{BeCuMC22})
	\begin{equation} \label{eq: auxiliar Fourier function at zero}
			Y(\xi):= -\frac{i\xi}{2\pi |\xi|^2} \frac{1}{\widehat{Q_\d^s}(0)^2},
	\end{equation}
	whose inverse Fourier transform in the sense of distributions is 
	\begin{equation}\label{eq: inverse FT of Y} \mathcal{F}^{-1}(Y)=\frac{x}{|x|^n}\frac{1}{\sigma_{n-1}\widehat{Q^s_\d} (0)^2},
	\end{equation}
	\cite[Lemma B.1 \emph{c)}]{BeCuMC22}. Let $\f \in C^{\infty}_c(\Rn)$ such that $\f_{B(0,\frac{1}{4})}=1$ and $\f_{B(0,\frac{1}{2})^c}=0$. We then write
	\begin{equation}\label{eq:decomposition Z}
		\widehat{\varPsi^s_\d}(\xi)=(\widehat{\varPsi^s_\d}(\xi)-Y(\xi))\varphi(\xi) +\widehat{\varPsi^s_\d}(\xi)(1-\f(\xi)) - Y(\xi)(1-\f(\xi)) + Y(\xi),
	\end{equation}
	Now, the first term in the decomposition \eqref{eq:decomposition Z} is a tempered distribution as $(\widehat{\varPsi^s_\d}-Y)\varphi\in L^1(\Rn)$ by Lemma \ref{lem: behaviour around zero}. Second and third terms are $L^\infty(\Rn)$ functions, while $Y$ is a tempered distribution. We therefore deduce that $\widehat{\varPsi^s_\d}$ is a tempered distribution.

	Let us now prove that $\beta$ is indeed a function. Applying the Fourier transform yields
	\begin{equation}\label{eq: FT of F Phi}
		\mathcal{F}(\widehat{\varPsi^s_\d})(x)=	\mathcal{F}\left((\widehat{\varPsi^s_\d}-Y)\varphi\right)(x) + 	\mathcal{F}\left(\widehat{\varPsi^s_\d}(1-\f)\right)(x) - 	\mathcal{F}(Y(1-\f))(x) + \widehat{Y}(x).
	\end{equation}
	We next examine each of the terms one by one. First, we find  $\widehat{Y}\in C^{\infty}(\Rn\backslash\{0\})$, \cite[Lemma B.1 \emph{c)}]{BeCuMC22}. As $(\widehat{\varPsi^s_\d}-Y)\varphi$ is a distribution with compact support, its Fourier transform is a $C^{\infty}$ function\cite[Theorem 2.3.21]{Grafakos08a}. Finally, the arguments employed for $\mathcal{F}(\widehat{\varPsi^s_\d}(1-\f))$ and $\mathcal{F}(Y(1-\f))$ are similar. The term $\mathcal{F}(Y(1-\f))$ was actually treated in \cite[Ex.\ 2.4.9]{Grafakos08a} where, besides being in $C^\infty(\Rn \backslash \{0\})$, it was additionally shown to decay faster the reciprical of any polynomial. Regarding the term $\mathcal{F}(\widehat{\varPsi^s_\d}(1-\f))$, using Lemma \ref{le: bounds of G hat} instead of \cite[Lemma 5.7]{BeCuMC22}, we may follow the same argument at the beginning of the proof of \cite[Theorem 5.9]{BeCuMC22}. It follows that for any $M \in \N$, we find $\mathcal{F}( \partial^\a \widehat{\varPsi^s_\d}(1-\f))\in C^M(\Rn)$ for sufficiently large $\a$. Consequently, $(2\pi i \xi)^\a\mathcal{F}(\widehat{\varPsi^s_\d}(1-\f))$ is bounded, which means that $\mathcal{F}(\widehat{\varPsi^s_\d}(1-\f)) \in C^{\infty}(\Rn\backslash \{0\})$ and decreases faster than the inverse of any polynomial. Collecting the results for the four terms leads to $\varPsi^s_\d \in C^{\infty}_c(\Rn\backslash\{0\},\Rn)$. Moreover, as $\widehat{\varPsi^s_\d}$ is vector radial, its (inverse) Fourier transform, $\varPsi^s_\d$, is vector radial.
	
	Now we proof {\it\ref{item:a})}, i.e., we address the behaviour of $\varPsi^s_\d$ at infinity. Concretely, we are going to see that the three first terms in the decomposition of $\mathcal{F}(\widehat{\varPsi^s_\d})$ decay faster at infinity than $\frac{1}{|x|^n}$, which is actually the decay rate of  $\mathcal{F}^{-1}(Y)$. In particular, let us see that there exists $C>0$ such that
	\begin{equation} \label{eq: decay of the difference}
		\left|\mathcal{F}\left((\widehat{\varPsi^s_\d}-Y)\varphi\right)(x)\right| \leq \frac{C}{|2\pi x|^n}.
	\end{equation} 
	By Lemma \ref{lem: behaviour around zero}, we have that $\left[(\partial_i)^n	(\widehat{\varPsi^s_\d}-Y)_i\right]\varphi \in L^1(\Rn) $. Thus, its Fourier transform is in $C_0(\Rn)$ and so, by the Fourier transform of a derivative, we obtain that $(2\pi i x_i)^n \mathcal{F}\left((\widehat{\varPsi^s_\d}-Y)_i\varphi\right) \in C_0(\Rn)$. As a result, \eqref{eq: decay of the difference} follows.
	Finally, since we already know that the other two terms decrease faster than the inverse of any polynomial and that $\widehat{Y}$ decreases as $1/|x|^{n-1}$ (see \eqref{eq: inverse FT of Y} ), we have that
	\begin{align*}
		\lim_{\l \to \infty} \l^{n-1} \varPsi^s_\d(\l x)=&\lim_{\l \to \infty} \lambda^{n-1} \mathcal{F}^{-1}(Y)(\l x)\\
		=& \lim_{\l \to \infty} \lambda^{n-1}\frac{1}{\widehat{Q_\d^s}(0)^2\sigma_{n-1}} \frac{\l x}{|\l x|^n}=\frac{c_{n-1}}{\|Q_\d^s\|_{L^1(\Rn)}^2} \frac{ x}{| x|^n},
	\end{align*}
	where we have used that $\widehat{Q_\d^s}(0)=\|Q_\d^s\|_{L^1(\Rn)}$ (\cite[Proposition 5.2 \emph{a)}]{BeCuMC22}) and that $c_{n,-1}=\frac{1}{\sigma_{n-1}}$.

	In order to show {\it\ref{item:b})}, we compare now $\varPsi^s_\d $ with the vectorial Riesz potential (see Lemma \ref{lemma: Fourier transform singular vector Riesz potential}), i.e., we write
	\begin{equation} \label{eq: rewriting Phi hat}
		\widehat{\varPsi^s_\d}(\xi)= -i\frac{\xi}{|\xi|}\frac{1}{|2\pi \xi| \widehat{Q_\d^s}(\xi)^2} - \frac{-i\xi}{a_0^2|\xi|}\frac{}{|2\pi \xi|^{-1+2s}} + \frac{-i\xi}{a_0^2|\xi|}\frac{1}{|2\pi \xi|^{-1+2s}}.
	\end{equation}
	Let us focus on the term
	\begin{equation*}
		\widehat{T}(\xi):= -i\frac{\xi}{|\xi|}\left(\frac{1}{|2\pi \xi| \widehat{Q_\d^s}(\xi)^2} - \frac{1}{a_0^2|2\pi \xi|^{-1+2s}}\right),
	\end{equation*}
	then
	\begin{equation} \label{eq: T hat}
		i\frac{\xi}{|\xi|}\cdot\widehat{T}(\xi)=\frac{a_0^2|2\pi\xi|^{-2+2s} - \widehat{Q^s_\d}(\xi)^2}{a_0^2|2\pi \xi|^{-1+2s}\widehat{Q^s_\d}(\xi)^2}= \frac{\left(a_0|2\pi\xi|^{-1+s}-\widehat{Q^s_\d}(\xi)\right)\, \left(a_0|2\pi\xi|^{-1+s}+\widehat{Q^s_\d}(\xi)\right)}{a_0^2|2\pi \xi|^{-1+2s} \widehat{Q^s_\d}(\xi)^2}.
	\end{equation}
	Now, since $\varPsi^s_\d\in C^{\infty}(\Rn\backslash\{0\})$, then ${T}\in C^{\infty}(\Rn\backslash\{0\})$ as well, and by {\it\ref{item:a})},
	\begin{equation} \label{eq: decay of T}
		\lim_{\lambda \to \infty} \lambda^{n-1}  T_{\d}^s(\lambda x) = \frac{c_{n,-1}}{\|Q_\d^s\|_{L^1}^2} \frac{x}{|x|^{n}} .
	\end{equation}
	We show that $T\in C^{\infty}(\Rn)$ as a consequence of the integrability of $\widehat{T}$. On the one hand, we have that the asymptotic behaviour  at infinity of the quotient of two of the factors in \eqref{eq: T hat} is given by 
	\begin{equation} \label{eq: decay extra factor T}
		\lim_{|\xi| \to \infty}\frac{ \left(a_0|2\pi\xi|^{-1+s}+\widehat{Q^s_\d}(\xi)\right)}{a_0^2|2\pi \xi|^{-1+2s} \widehat{Q^s_\d}(\xi)^2}=\lim_{|\xi| \to \infty}\frac{2}{a_0}|2\pi \xi|^{2-3s}.
	\end{equation}
	Indeed, this is a consequence of $\lim_{|\xi| \to \infty} \widehat{Q^s_\d}(\xi)/|2\pi \xi|^{-(1-s)}=a_0$ (see \cite[Proposition 5.2 ]{BeCuMC22}). In addition, as $\widehat{Q^s_\d} \in C^{\infty}(\Rn)$ and $\widehat{Q^s_\d}(\xi)>0$ for every $\xi$ (see \cite[Proposition 5.5]{BeCuMC22}), this term is continuous in $\Rn \backslash \{0\}$. On the other hand, the function $\xi\mapsto\left(a_0|2\pi\xi|^{-1+s}-\widehat{Q^s_\d}(\xi)\right)$ was studied in the proof of \cite[Theorem 5.9]{BeCuMC22} and shown to decay faster than the inverse of any polynomial in infinity. Combining this with \eqref{eq: decay extra factor T}, we have that $\widehat{T}$ decay faster than the inverse of any polynomial in infinity. This yields that $(2\pi i \xi) \widehat{T}(\xi) \in L^1(\Rn)$, which implies that $\nabla T=\mathcal{F}^{-1} \left((2\pi i \xi) \widehat{T}(\xi) \right) \in C_0(\Rn,\Rnn)$. Indeed, for every multi-index $a\in \N^n$ with $|\a|\geq 1$ we have that $\xi\mapsto(2\pi i\xi)^\a \widehat{T}(\xi)\in L^1(\Rn)$, which means that $\partial^\a T=\mathcal{F}^{-1} \left((2\pi i \xi)^\a \widehat{T}(\xi) \right) \in C_0$, and hence, $T\in C^{\infty}(\Rn,\Rn)$. In particular, as a consequence of \eqref{eq: decay of T} and the smoothness of $T$, we have that $T$ is bounded.
	
	As a consequence, by Lemma \ref{lemma: Fourier transform singular vector Riesz potential}, 
	 we have that \eqref{eq: rewriting Phi hat} is the Fourier transform of 
	\begin{equation*}
		\varPsi^s_\d(x)= \begin{cases}
			 T(x) + \pv \frac{n-2s}{a_0^2\gamma(2s) |x|^{n+1-2s}}\frac{x}{|x|}& \text{ if } (n,s)\neq (1,\frac{1}{2}) \\
			  T(x) + \frac{1}{a_0^2\pi} \pv \frac{1}{x}& \text{ if } (n,s)= (1,\frac{1}{2})
		\end{cases}
	\end{equation*}
	which shows {\it\ref{item:b})}.
	
	Lastly, in order to show {\it\ref{item:c})}, given $R>0$, for every $x\in B(0,R)\backslash\{0\}$, we have that
	\begin{equation*}
		|\varPsi^s_\d(\xi)|\leq \|T\|_{L^\infty(B(0,R))} + \frac{n-2s}{a_0^2 \gamma(2s)|x|^{n+1-2s}} \leq \left( \|T\|_{L^\infty(\Rn)} R^{n+1-2s}  + \frac{n-2s}{a_0^2\gamma(2s)} \right) \frac{1}{|x|^{n+1-2s}}.
	\end{equation*}

\end{proof}

\begin{proof}[Proof of Theorem \ref{th: NL Green formula Laplacian}]
	As $\varPsi^s_\d$ is vector radial, since its Fourier transform is so, there exists functions ({\it radial representatives}) $\bar{T},\, \bar{\varPsi}^s_\d:\R^+ \to \R$ such that, by Proposition \ref{prop: gradient of the NL Green formula} \emph{b)},
	\[
	\varPsi^s_\d=\bar{\varPsi}^s_\d(|x|)\frac{x}{|x|}= \begin{cases}
		 \bar{T}(|x|)\frac{x}{|x|}+\pv\frac{(n-2s)x}{a_0^2\gamma(2s) |x|^{n+2-2s}}& \text{ if } (n,s)\neq(1,\frac{1}{2}) \\
		\bar{T}(|x|)\frac{x}{|x|}+\frac{1}{a_0^2\pi} \pv \frac{1}{x}& \text{ if } (n,s)=(1,\frac{1}{2}).
	\end{cases} 
	\]
	Hence, we can obtain $\Phi^s_\d$ by integrating the radial representative of $\varPsi^s_\d$, $\bar{\varPsi}^s_\d$. 
 Thus, according to \eqref{eq: decay of T} and, in particular, the integrability of $T$, we define $A_n$ as 
	\begin{equation*}
 A_n(x)=\left\{ \begin{array}{ll}
	\displaystyle\int_{0}^{|x|} \bar{T}(r)dr, & \mbox{if }n=1, \\
\displaystyle\int_{1}^{|x|} \bar{T}(r)dr, & \mbox{if }n=2\\ \displaystyle\int_{|x|}^{+\infty} \bar{T}(r)dr, & \mbox{if }n\ge 3.
 \end{array}\right.
	\end{equation*}
	The smoothness of $T$ and \eqref{eq: decay of T} implies that $A_n,A_2$ and $A_1$ are $C^{\infty}(\Rn)$, whereas the asymptotic behaviour at infinity will be detailed below. Then, by integration of $\bar{\beta}$, we can define
	\begin{equation} \label{equation for G}
		\Phi^s_\d(x)= \begin{cases}
			A_n(x)-\frac{1}{a_0^2\gamma(2s) }\frac{1}{|x|^{n-2s}}& \text{ if } (n,s)\neq(1,\frac{1}{2}) \\
			A_1(x) + \frac{1}{a_0^2\pi }\log(|x|)& \text{ if } (n,s)=(1,\frac{1}{2})
		\end{cases}
	\end{equation}
	which proves Theorem \ref{th: NL Green formula Laplacian} \emph{a)} and \emph{d)}, to wit, $\nabla \Phi^s_\d =\varPsi^s_\d$. Thus, by Lemma \ref{le: candidate for the gradient of G}, $\Phi^s_\d$ is the fundamental solution of $\Delta^s_\d(\cdot)$, i.e. $\Delta^s_\d(\Phi^s_\d)=\d_0$ in the sense of distributions, which also implies that $\Delta^s_\d (\Phi^s_\d * u)=u$. In particular, as $\nabla \Phi^s_\d =\varPsi^s_\d$, $\Phi^s_\d\in C^\infty(\Rn \backslash\{0\})$.
	
	In order to show \emph{b)}, given $R>0$, for every $x\in B(0,R)\backslash\{0\}$, we have that
	\begin{equation*}
		| \Phi^s_\d(\xi)|\leq \|A_n\|_{L^\infty(B(0,R))} + \frac{1}{a_0^2 \gamma(2s)|x|^{n-2s}} \leq \left( \|A_n\|_{L^\infty(B(0,R))} R^{n-2s}  + \frac{1}{a_0^2\gamma(2s)} \right) \frac{1}{|x|^{n-2s}}.
	\end{equation*}
 As for \emph{e)}, as we did in Lemma \ref{le: candidate for the gradient of G}, since the convolution by $Q^s_\d$ is well defined in the sense of distributions,
 \begin{align}\label{Eq:FourierNLGradG}
 	\widehat{D^s_\d \Phi^s_\d}(\xi)= \widehat{Q^s_\d * \nabla \Phi^s_\d}(\xi)=\widehat{Q^s_\d}(\xi) \widehat{\nabla \Phi^s_\d}(\xi)= -i\frac{\xi}{|\xi|}\frac{1}{|2\pi \xi|\widehat{Q_\d^s}(\xi)}
 \end{align}
 where last equality comes from Proposition \ref{prop: gradient of the NL Green formula} and the equality $\nabla \Phi^s_\d =\varPsi^s_\d$. 
	
	Finally, we are left with \emph{c)}, where the behaviour at infinity of $\Phi^s_\d$ is actually determined by that of $A_n$. As happens with the fundamental solution of the classical Laplacian, we are going to consider three different cases.
	
	\emph{Case $n\geq 3$.} The resulting limit is,
	\begin{equation} \label{eq: limit An3}
		\lim_{\l \to \infty} \l^{n-2} A_n(\l x)=\frac{c_{n,-1}}{\|Q^s_\d\|_{L^1}^2} \frac{1}{(n-2)|x|^{n-2}}.
	\end{equation}
	Indeed, for $\l>0$ we have that
	\begin{equation*}
		\l^{n-2} A_n(\l x)= \l^{n-2} \int_{\l |x|}^{\infty} \overline{T}(r)\, dr=\l^{n-2} \int_{|x|}^{\infty} \overline{T}(\l r) \l\, dr.
	\end{equation*}
	Using dominated convergence theorem thanks to \eqref{eq: decay of T}, it yields
\begin{align*}
		\lim_{\l \to \infty} \l^{n-2} A_n(\l x)&=\int_{|x|}^{\infty} \lim_{\l \to \infty}\overline{T}(\l r) \l^{n-1}\, dr\\
		&=\int_{|x|}^{\infty} \frac{c_{n,-1}}{\|Q^s_\d\|^2_{L^1}}\frac{1}{r^{n-1}} \, dr=\frac{c_{n,-1}}{\|Q^s_\d\|^2_{L^1}}\frac{1}{(n-2) |x|^{n-2}}.
	\end{align*}
    As a consequence, \eqref{eq: limit An3} holds.
    
	\emph{Case $n=2$.} By \eqref{eq: decay of T}, there exists $C>0$ such that
	\begin{equation*}
		\int_{1}^{|x|} \overline{T}(r)\, dr \geq \int_{1}^{|x|}  \frac{C}{r}\, dr=C\log(|x|).
	\end{equation*}
	 Therefore, $	\lim_{|x| \to \infty}A_2(x) =\infty=	\lim_{|x|\to \infty}\log (|x|)$. Thus, in order to apply L'Hôpital's rule we first study the limit
				\begin{equation*}
					\lim_{\l \to \infty} \frac{A_2'(\l x)}{(\log(\l |x|))'}=\lim_{\l \to \infty} \frac{\overline{T}(\l |x|) \, |x|}{\frac{1}{\l |x|} \, |x|}=\frac{c_{n,-1}}{\|Q^s_\d\|_{L^1}^2}.
				\end{equation*}
				Then, by L'Hôpital's rule we have that
				\begin{equation*}
					\lim_{\l \to \infty} \frac{A_2(\l x)}{\log(\l |x|)}=\frac{c_{n,-1}}{\|Q^s_\d\|_{L^1}^2}.
			\end{equation*}
			\emph{Case $n=1$.} Lastly, applying dominated convergence again, by \eqref{eq: decay of T} we have that
			\begin{align*}
				\lim_{\l \to \infty}\frac{ A_1(\l x)}{\l}= \lim_{\l \to \infty}\frac{1}{\l}\int_{0}^{\l|x|} \overline{T}(r)\, dr=\lim_{\l \to \infty}\int_{0}^{|x|} \overline{T}(\l r) \, dr=\int_{0}^{|x|} \frac{c_{n,-1}}{\|Q^s_\d\|_{L^1}^2}=\frac{c_{n,-1}}{\|Q^s_\d\|_{L^1}^2} |x|.
			\end{align*}
		Given \eqref{equation for G} we have that \emph{c)} is actually determined by the behaviour at infinity of $A_n$.
		\end{proof}

			\section[The Three Nonlocal Green Identities]{The three nonlocal Green identities}\label{3GIdent}

  We start with a corollary of Theorem \ref{th: nonlocal intergration by parts alt}. The particular case when $u=1$ in $\O_\d$ and $\f=D^s_\d v$ for $v\in C^{\infty}_c(\Rn)$ leads to the following.
  \begin{cor}\label{cor: divergence for laplacian}
      Given $0\leq s<1$ and $v\in C^{\infty}_c(\Rn)$,
\[
    \int_{\O_{-\d}} \Delta^s_\d v(y) \, dy=\int_{\Gamma_\pm\d}\mathcal{N}D_\d^sv(x)\,dx=
            - c_{n,s} \int_{\Gamma_{\pm\d}}
            \pv_x\int_{\O} \frac{D^s_\d v(y)}{|y-x|} \cdot \frac{y-x}{|y-x|}\frac{w_{\d}(x-y)}{|y-x|^{n-1+s}} \, dy \, dx,
\]
where $\mathcal{N}$ is the operator introduced in Definition~\ref{def: NL normal direction}.
  \end{cor}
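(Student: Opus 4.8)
The plan is to obtain Corollary~\ref{cor: divergence for laplacian} by specializing the third integration-by-parts formula, Theorem~\ref{th: nonlocal intergration by parts alt}, to $u\equiv 1$ on $\O_\d$ and $\f=D^s_\d v$. Concretely, I would fix $u\in C^\infty_c(\Rn)$ with $u\equiv 1$ on an open neighbourhood of $\overline{\O_\d}$ — possible since $\O$, hence $\O_\d$, is bounded — and set $\f:=D^s_\d v$, which lies in $C^\infty_c(\Rn,\Rn)$ by Proposition~\ref{Prop: convolution with the classical gradient}. Being smooth and compactly supported, $u$ and $\f$ meet the hypotheses of Theorem~\ref{th: nonlocal intergration by parts alt} for any admissible choice of exponents, so that identity (with the factor $u$ retained in the boundary integral, which runs over the collar $\Gamma_{\pm\d}$) yields
\[
\int_{\O} D^s_\d u(x)\cdot\f(x)\,dx=\int_{\Gamma_{\pm\d}}u(x)\,\mathcal{N}\f(x)\,dx-\int_{\O_{-\d}}u(y)\,\diver^s_\d\f(y)\,dy .
\]

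Next I would simplify the three terms. For every $x\in\O$ one has $B(x,\d)\subseteq\O_\d$, on which $u$ is constant, so the integrand in \eqref{eq: definition of nonlocal gradient} vanishes identically and $D^s_\d u\equiv 0$ on $\O$; hence the left-hand side is $0$. On the right-hand side $\O_{-\d}\subseteq\O_\d$ and $\Gamma_{\pm\d}\subseteq\O_\d$, so $u\equiv 1$ there, while $\diver^s_\d\f=\diver^s_\d(D^s_\d v)=\Delta^s_\d v$ on $\O$ by the definition of the nonlocal Laplacian in Definition~\ref{def: nonlocal gradient}. The identity collapses to $\int_{\O_{-\d}}\Delta^s_\d v(y)\,dy=\int_{\Gamma_{\pm\d}}\mathcal{N}D^s_\d v(x)\,dx$, which is the first claimed equality. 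The second equality is just Definition~\ref{def: NL normal direction} (applied to the vector field $D^s_\d v$) with $\frac{x-y}{|x-y|}=-\frac{y-x}{|y-x|}$ and $|x-y|=|y-x|$ substituted inside the principal value, followed by an integration over $\Gamma_{\pm\d}$.

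There is no genuine analytic difficulty here, as everything in play is smooth with compact support; the two bookkeeping points I would take care to state are: (i) $u$ must be constant on the \emph{full} enlarged set $\O_\d$, not merely on $\O$, since $D^s_\d u(x)$ samples $u$ over the entire ball $B(x,\d)\subseteq\O_\d$ — this is exactly what forces $D^s_\d u\equiv 0$ on $\O$; and (ii) the inner integral defining $\mathcal{N}D^s_\d v(x)$ genuinely requires the principal value when $x\in\Gamma_{-\d}\subseteq\O$ (there the singularity $y=x$ lies in the integration domain $\O$), whereas for $x\in\Gamma_\d$ it is an absolutely convergent integral; convergence in both cases follows from the odd symmetry of the kernel and the Lipschitz regularity of $D^s_\d v\in C^\infty_c(\Rn)$, exactly as in the proof of Theorem~\ref{th: nonlocal intergration by parts alt}.
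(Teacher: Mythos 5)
Your proposal is correct and is exactly the paper's argument: the corollary is obtained precisely by specializing Theorem~\ref{th: nonlocal intergration by parts alt} to $u\equiv 1$ on $\O_\d$ and $\f=D^s_\d v$, so that $D^s_\d u=0$ on $\O$, $\diver^s_\d\f=\Delta^s_\d v$, and the remaining boundary term is $\int_{\Gamma_{\pm\d}}\mathcal{N}D^s_\d v$. (You also read the boundary term of that theorem in the form $\int_{\Gamma_{\pm\d}}u\,\mathcal{N}\f$, which is what its proof actually establishes and what Green's first identity uses, even though the displayed statement omits the factor $u$ and writes $\Gamma_\d$.)
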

\begin{rem}
    Since $\Gamma_{\d}\cap\O=\emptyset$, the integrand on the right is singular only for $x\in\Gamma_{-\d}$. We may therefore write
      \begin{equation*}
          \begin{split}
		\int_{\O_{-\d}} \Delta^s_\d v(y) \, dy
            = &- c_{n,s} 	\int_{\Gamma_\d}\int_{\O} \frac{D^s_\d v(y)}{|y-x|} \cdot \frac{y-x}{|y-x|}\frac{w_{\d}(x-y)}{|y-x|^{n-1+s}} \, dy \, dx \\
					&\qquad-	 c_{n,s}\int_{\Gamma_{-\d}}\pv_x\int_{\O} \frac{D^s_\d v(y)}{|y-x|} \cdot \frac{y-x}{|y-x|}\frac{w_{\d}(x-y)}{|y-x|^{n-1+s}} \, dy \, dx.\\
            =&\int_{\Gamma_\d}\mathcal{N}D_\d^sv(x)\,dx
            -	 c_{n,s}\int_{\Gamma_{-\d}}\pv_x\int_{\O} \frac{D^s_\d v(y)}{|y-x|} \cdot \frac{y-x}{|y-x|}\frac{w_{\d}(x-y)}{|y-x|^{n-1+s}} \, dy \, dx.
				\end{split}
      \end{equation*}
 In the following Green's identities, a similar formulation of the collar integrals can be made.
\end{rem}

			Another result that we can obtain as a corollary of Theorem \ref{th: nonlocal intergration by parts alt} by choosing $\f=D^s_\d v$ is the first nonlocal Green Identity.
			\begin{theo}[\textbf{Green's first identity}] \label{th NL Green first identity}
			Let $0\leq s<1$, $1<p<\infty$ and $\e>0$. Assume $u\in H^{s,p,\d}(\O)\cap L^{q_1}(\Gamma_{\pm\e},\Rn$) for some $q_1>\frac{1}{1-s}$ and $v\in C^\infty_c(\Rn)$, then		

   \begin{equation*} \label{eq: Green first identity}
					\int_{\O_{-\d}} u(y) \Delta^s_\d v(y) \, dy+\int_{\O}  D^s_\d u(x) \cdot D^s_\d v(x) \, dx
                    =\int_{\Gamma\pm\d}u(x)\mathcal{N}D_\d^sv(x)\,dx
			\end{equation*}
		\end{theo}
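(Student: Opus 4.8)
The plan is to read the identity off from Theorem~\ref{th: nonlocal intergration by parts alt} (Integration by parts~3) by specializing its vector field to $\f := D^s_\d v$. First I would check that this choice is admissible: since $v\in C^\infty_c(\Rn)$, Proposition~\ref{Prop: convolution with the classical gradient} gives $D^s_\d v=\nabla(Q^s_\d * v)\in C^\infty_c(\Rn,\Rn)$, so $\f$ is smooth and compactly supported and therefore belongs to $H^{s,p',\d}(\O,\Rn)\cap L^{q_2}(\Gamma_{\pm\e},\Rn)$ for every $1<p'<\infty$ and every $q_2>\frac{1}{1-s}$. Hence the hypotheses of Theorem~\ref{th: nonlocal intergration by parts alt} are met for the pair $(u,\f)$ precisely under the assumptions placed on $u$ in the present statement.

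Next I would substitute $\f=D^s_\d v$ into \eqref{eq: nonlocal intergration by parts alt}, obtaining
\[
    \int_{\O}  D^s_\d u(x) \cdot D^s_\d v(x) \, dx
    = \int_{\Gamma_{\pm\d}} u(x)\,\mathcal{N}D^s_\d v(x)\,dx
      - \int_{\O_{-\d}} u(y)\,\diver^s_\d\big(D^s_\d v\big)(y) \, dy,
\]
and then use that $\diver^s_\d(D^s_\d v)=\Delta^s_\d v$ by the definition of the nonlocal Laplacian (Definition~\ref{def: nonlocal gradient}) to rewrite the last term as $\int_{\O_{-\d}} u\,\Delta^s_\d v\,dy$. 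Moving it to the left-hand side produces
\[
    \int_{\O_{-\d}} u(y)\,\Delta^s_\d v(y) \, dy + \int_{\O}  D^s_\d u(x) \cdot D^s_\d v(x) \, dx
    = \int_{\Gamma_{\pm\d}} u(x)\,\mathcal{N}D^s_\d v(x)\,dx,
\]
which is the asserted Green's first identity. Along the way I would record that the collar integral is unambiguous: for $x\in\Gamma_\d$ the kernel defining $\mathcal{N}$ is non-singular, while for $x\in\Gamma_{-\d}$ the principal value exists because $D^s_\d v$ is Lipschitz, so that after removing the odd part the integrand is $O(|x-y|^{1-n-s})$ and integrable; moreover $x\mapsto u(x)\,\mathcal{N}D^s_\d v(x)$ lies in $L^1(\Gamma_{\pm\d})$ by the estimates of Lemma~\ref{lem: controlling the boundary term} together with $u\in H^{s,p,\d}(\O)\cap L^{q_1}(\Gamma_{\pm\e})$.

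Since all of the genuine analysis — Fubini's theorem, the truncation/density passage from $C^\infty_c(\Rn)$ to $H^{s,p,\d}(\O)$ on the shrunken domains $\O_{-r}$, and the boundedness of the collar operator $\mathcal{N}$ — is already contained in the proof of Theorem~\ref{th: nonlocal intergration by parts alt}, there is essentially no obstacle here: the argument is a one-line specialization plus the identity $\diver^s_\d D^s_\d=\Delta^s_\d$. The only mild point of care is the interpretation of the principal value over $\Gamma_{-\d}$ and the $L^1$-integrability of the collar integrand for the specific field $D^s_\d v$, both of which follow at once from $v\in C^\infty_c(\Rn)$. If one later wished to relax the hypothesis on $v$ to some nonlocal space on which $\Delta^s_\d v$ is defined, one would instead invoke the remark after Theorem~\ref{th: nonlocal intergration by parts alt}, approximating $D^s_\d v$ in $H^{s,p',\d}(\O)$ with $\mathcal{N}$-convergence in $L^{p'}(\Gamma_{\pm\d})$; for the version stated here that refinement is unnecessary.
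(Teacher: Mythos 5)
Your proposal is correct and is exactly the paper's argument: the authors obtain Green's first identity as an immediate corollary of Theorem~\ref{th: nonlocal intergration by parts alt} by taking $\f=D^s_\d v$ and using $\diver^s_\d D^s_\d v=\Delta^s_\d v$. Your additional checks (that $D^s_\d v\in C^\infty_c(\Rn,\Rn)$ via Proposition~\ref{Prop: convolution with the classical gradient}, and the integrability of the collar term) are details the paper leaves implicit, and they are sound.
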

 \begin{rem} \label{rem: comment on the generalisation for second NL derivatives}
 Let $1<p<\infty$ and $\e>0$. In view of Theorem \ref{th: nonlocal intergration by parts alt}, these results could be generalized for $u\in H^{s,p,\d}(\O)\cap L^{q_1}(\Gamma_{\pm\e})$ and $\f\in H^{s,p',\d}(\O)\cap L^{q_2}(\Gamma_{\pm\e})$ satisfying $D^s_\d u\in H^{s,p,\d}(\O,\Rn)\cap L^{q_1}(\Gamma_{\pm\e},\Rn)$ and $D^s_\d \f\in H^{s,p',\d}(\O,\Rn)\cap L^{q_2}(\Gamma_{\pm\e},\Rn)$, for some $q_1>\frac{1}{1-s}$ and $q_2>\frac{1}{1-s}$. 
 \end{rem}
		Using the symmetry of the second integral, we may apply the first nonlocal Green identity with the roles of $u$ and $v$ switched. Taking the difference produces the second nonlocal Green identity.
			\begin{theo}[\textbf{Green's second identity}]
					Let $0\leq s<1$. Suppose that $u,v\in C^\infty_c(\Rn)$, then
   		\begin{equation*} \label{eq: Second Green  identity}
		\int_{\O_{-\d}} \left[u(y) \Delta^s_\d v(y)-v(y) \Delta^s_\d u(y)\right] \, dy
		=\int_{\Gamma_{\pm\d}}\left[u(x)\mathcal{N}D_\d^sv(x)
                -v(x)\mathcal{N}D_\d^su(x)\right]\,dx.
			\end{equation*}
			\end{theo}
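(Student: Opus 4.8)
The plan is to derive Green's second identity directly from Green's first identity (Theorem \ref{th NL Green first identity}) by exploiting the symmetry of the bilinear term $\int_\O D^s_\d u\cdot D^s_\d v\,dx$. First I would check that the hypotheses of Theorem \ref{th NL Green first identity} are met: since $u,v\in C^\infty_c(\Rn)$, in particular $u,v\in H^{s,p,\d}(\O)$ for every $1<p<\infty$ by Definition \ref{de: NL functional spaces density}, and the additional integrability $u,v\in L^{q_1}(\Gamma_{\pm\e})$ holds trivially for any $q_1$ (smooth compactly supported functions are bounded), so we may apply the first identity with, say, $p=2$ and any admissible $q_1>\frac{1}{1-s}$. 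Likewise all the boundary operators $\mathcal{N}D^s_\d u$, $\mathcal{N}D^s_\d v$ are well-defined on $\Gamma_{\pm\d}$ by Lemma \ref{lem: controlling the boundary term} together with the bound \eqref{eq: bound for V}-type smoothness of $D^s_\d$ on smooth inputs (indeed $D^s_\d v\in C^\infty_c(\Rn)$ by Proposition \ref{Prop: convolution with the classical gradient}).

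Next I would write down Theorem \ref{th NL Green first identity} once as stated,
\begin{equation*}
\int_{\O_{-\d}} u(y)\,\Delta^s_\d v(y)\,dy+\int_{\O} D^s_\d u(x)\cdot D^s_\d v(x)\,dx=\int_{\Gamma_{\pm\d}} u(x)\,\mathcal{N}D^s_\d v(x)\,dx,
\end{equation*}
and then a second time with the roles of $u$ and $v$ interchanged — this is legitimate precisely because both $u$ and $v$ lie in $C^\infty_c(\Rn)$, so the hypotheses are symmetric in the two functions:
\begin{equation*}
\int_{\O_{-\d}} v(y)\,\Delta^s_\d u(y)\,dy+\int_{\O} D^s_\d v(x)\cdot D^s_\d u(x)\,dx=\int_{\Gamma_{\pm\d}} v(x)\,\mathcal{N}D^s_\d u(x)\,dx.
\end{equation*}
Subtracting the second display from the first, the middle terms cancel since $D^s_\d u(x)\cdot D^s_\d v(x)=D^s_\d v(x)\cdot D^s_\d u(x)$ pointwise, leaving
\begin{equation*}
\int_{\O_{-\d}}\left[u(y)\,\Delta^s_\d v(y)-v(y)\,\Delta^s_\d u(y)\right]dy=\int_{\Gamma_{\pm\d}}\left[u(x)\,\mathcal{N}D^s_\d v(x)-v(x)\,\mathcal{N}D^s_\d u(x)\right]dx,
\end{equation*}
which is exactly the claimed identity.

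The only point requiring a small amount of care — and the closest thing to an obstacle here — is making sure the volume integral $\int_\O D^s_\d u\cdot D^s_\d v\,dx$ is absolutely convergent so that the cancellation is legitimate and not a difference of divergent quantities; but this is immediate since $D^s_\d u, D^s_\d v\in C^\infty_c(\Rn)$ are bounded and $\O$ is bounded, so the integral is finite and genuinely symmetric. Everything else is a routine invocation of the already-established first identity. Thus the proof reduces to: (i) verify hypotheses for smooth compactly supported $u,v$; (ii) apply Theorem \ref{th NL Green first identity} twice with roles swapped; (iii) subtract and cancel the symmetric bilinear term.
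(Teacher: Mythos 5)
Your proposal is correct and follows exactly the paper's argument: the paper likewise obtains the second identity by applying Green's first identity twice with the roles of $u$ and $v$ interchanged and subtracting, the symmetric term $\int_\O D^s_\d u\cdot D^s_\d v\,dx$ cancelling. Your extra verification of the hypotheses for $u,v\in C^\infty_c(\Rn)$ is sound and, if anything, more careful than the paper's one-line justification.
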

			Finally, Green's first identity and the nonlocal fundamental theorem of calculus yield the following result. 
			%
			%
			\begin{theo}[\textbf{Green's third identity}]
				Given $0<s<1$ and $v\in C^\infty_c(\Rn)$,  for each $x\in\O_{-\d}$,
    \begin{equation*} \label{eq: Third Green  identity}
					\int_{\O_{-\d}} \Phi^s_\d(x-y) \Delta^s_\d v(y)\,dy- v(x)
        =\int_{\Gamma_{\pm\d}}\left[\Phi^s_\d(x-y)\mathcal{N}D^s_\d v(y)
            -v(y)\mathcal{N}D^s_\d\Phi^s_\d(x-y)\right]\,dy
			\end{equation*}
			\end{theo}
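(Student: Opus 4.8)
The plan is to mimic the classical derivation of the third Green identity, with $\Phi^s_\d(x-\cdot)$ in the role of the fundamental‑solution kernel and with the property $\Delta^s_\d\Phi^s_\d=\d_0$ (Theorem~\ref{th: NL Green formula Laplacian}) replacing $\Delta\Phi=\d_0$. Fix $x\in\O_{-\d}$ and write $g:=\Phi^s_\d(x-\cdot)$. The first step is to check that $g$ is admissible in Green's first identity: by Corollary~\ref{cor: G in Hspd}, $\Phi^s_\d\in H^{s,p,\d}(B(0,R))$ for every $R>0$ and $1\le p<\tfrac{n}{n-s}$ with $D^s_\d\Phi^s_\d=V^s_\d$, and reflecting, translating and invoking Proposition~\ref{prop: basic identities}~\emph{a), c)} gives $g\in H^{s,p,\d}(\O)$ with $D^s_\d g=-V^s_\d(x-\cdot)$. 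Since $\dist(x,\partial\O)>\d$, the only singularity of $g$ (at $y=x$) sits at distance $>\d-\e>0$ from $\Gamma_{\pm\e}$ whenever $0<\e<\d$, so $g$ and $V^s_\d(x-\cdot)$ are smooth, hence bounded, on $\Gamma_{\pm\e}$; in particular $g\in L^{q_1}(\Gamma_{\pm\e})$ for every $q_1>\tfrac1{1-s}$.

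Applying Green's first identity (Theorem~\ref{th NL Green first identity}) with $u=g$ gives
\[
\int_{\O_{-\d}}\Phi^s_\d(x-y)\,\Delta^s_\d v(y)\,dy-\int_{\O}V^s_\d(x-z)\cdot D^s_\d v(z)\,dz=\int_{\Gamma_{\pm\d}}\Phi^s_\d(x-z)\,\mathcal{N}D^s_\d v(z)\,dz,
\]
so, substituting this into the asserted identity, the whole statement reduces to
\[
\int_{\O}V^s_\d(x-z)\cdot D^s_\d v(z)\,dz=v(x)-\int_{\Gamma_{\pm\d}}v(y)\,\mathcal{N}D^s_\d\big[\Phi^s_\d(x-\cdot)\big](y)\,dy.
\]
Here the nonlocal fundamental theorem of calculus (Theorem~\ref{Th: nonlocal version of FTC}), combined with $V^s_\d=D^s_\d\Phi^s_\d$, gives $v(x)=\int_{\Rn}V^s_\d(x-z)\cdot D^s_\d v(z)\,dz$, so it is enough to identify the exterior contribution
\[
\int_{\Rn\setminus\O}V^s_\d(x-z)\cdot D^s_\d v(z)\,dz=\int_{\Gamma_{\pm\d}}v(y)\,\mathcal{N}D^s_\d\big[\Phi^s_\d(x-\cdot)\big](y)\,dy.
\]

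To prove this last identity I would apply the nonlocal integration by parts (Theorem~\ref{th: nonlocal intergration by parts alt}) on the \emph{unbounded} domain $\Rn\setminus\O$ — legitimate because $\partial(\Rn\setminus\O)=\partial\O$ is bounded — with the pairing $u=v$, $\f=V^s_\d(x-\cdot)$; note that $\f$ is smooth on $(\Rn\setminus\O)+B(0,\d)=\Rn\setminus\O_{-\d}$ because $x$ sits deep inside $\O$ (indeed $\dist(x,\Rn\setminus\O_\d)>2\d$), and that $D^s_\d v$ is compactly supported so no contribution from infinity arises. In that identity the volumetric term is $-\int_{\Rn\setminus\O_\d}v(y)\,\diver^s_\d[V^s_\d(x-\cdot)](y)\,dy$, and it vanishes: $\diver^s_\d[V^s_\d(x-\cdot)]=-\diver^s_\d D^s_\d[\Phi^s_\d(x-\cdot)]=-\Delta^s_\d[\Phi^s_\d(x-\cdot)]$, which off $x$ coincides with the distribution $-\d_x=0$, and $\Rn\setminus\O_\d$ stays away from $x$. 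Hence only the collar integral over $\Gamma_{\pm\d}$ remains, namely $\int_{\Gamma_{\pm\d}}v(y)\,\mathcal{N}^{\Rn\setminus\O}[V^s_\d(x-\cdot)](y)\,dy$, where $\mathcal{N}^{\Rn\setminus\O}$ is the operator of Definition~\ref{def: NL normal direction} for the domain $\Rn\setminus\O$. Finally, comparing the definitions of $\mathcal{N}$ and $\diver^s_\d$ one checks, for any $\psi$ smooth near $y$, that $\mathcal{N}\psi(y)+\mathcal{N}^{\Rn\setminus\O}\psi(y)=-\diver^s_\d\psi(y)$; applying this to $\psi=V^s_\d(x-\cdot)$ at $y\in\Gamma_{\pm\d}$ (where $\psi$ is smooth and $\diver^s_\d\psi=0$ as above) yields $\mathcal{N}^{\Rn\setminus\O}[V^s_\d(x-\cdot)]=-\mathcal{N}[V^s_\d(x-\cdot)]=\mathcal{N}D^s_\d[\Phi^s_\d(x-\cdot)]$ on $\Gamma_{\pm\d}$, which closes the argument.

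The main obstacle is this last paragraph: making the integration by parts over $\Rn\setminus\O$ fully rigorous — in particular verifying $V^s_\d(x-\cdot)\in H^{s,p',\d}(\Rn\setminus\O,\Rn)$ using its mild growth toward $\partial\O_{-\d}$ and its classical decay at infinity (Proposition~\ref{prop: gradient of the NL Green formula}~\emph{a), c)}) — and, above all, the careful bookkeeping of the principal‑value collar integrals, the additivity identity $\mathcal{N}\psi+\mathcal{N}^{\Rn\setminus\O}\psi=-\diver^s_\d\psi$, and the upgrade of the distributional identity $\Delta^s_\d\Phi^s_\d=\d_0$ to the pointwise vanishing of $\diver^s_\d[V^s_\d(x-\cdot)]$ off $x$; this last point is exactly where the fundamental‑solution property of $\Phi^s_\d$ is used.
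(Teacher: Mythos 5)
Your argument is essentially the paper's own proof in a slightly reordered form: the paper starts from the nonlocal fundamental theorem of calculus, splits $\Rn=\O\cup(\Rn\setminus\O)$, and applies Green's first identity on each piece, with your additivity relation $\mathcal{N}\psi+\mathcal{N}^{\Rn\setminus\O}\psi=-\diver^s_\d\psi$ appearing there as the splitting $\int_{\Rn\setminus\O}=\int_{\Rn}-\int_{\O}$ of the exterior collar integral. The one step you flag as the main obstacle — upgrading $\Delta^s_\d\Phi^s_\d=\d_0$ to the vanishing of the full-space principal-value integral at points of $\Gamma_{\pm\d}$ — is resolved in the paper exactly as you suggest, by recognizing that integral as $-(D^s_\d\Phi^s_\d*\nabla Q^s_\d)(x-y)$ whose Fourier transform equals $1$, hence equals $\d_0(x-y)=0$ for $x\in\O_{-\d}$ and $y\in\Gamma_{\pm\d}$.
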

\begin{proof}
    Recalling Theorem~\ref{th: NL Green formula Laplacian} and Corollary~\ref{cor: G in Hspd},
\[
    \Phi^s_\delta\in H^{s,p,\delta}(B(0,R))\cap C^\infty(B(0,R)\setminus\{0\})\quad\text{ and }\quad D^s_\delta \Phi^s_\delta=V^s_\delta\text{ on }B(0,R),
\]
    for each $R>0$. Observe, as in Proposition~\ref{prop: basic identities}\emph{\ref{prop: basic identity c})}, that $D^s_\delta \Phi^s_\delta(x-y)=-V^s_\delta(x-y)$. Using Theorem~\ref{Th: nonlocal version of FTC}, we may write
\begin{align}
\nonumber
    -v(x)
    =&-\int_{\Rn}D^s_\delta v(y)\cdot V^s_\delta(x-y)dy
    =\int_{\Rn}D^s_\delta v(y)\cdot D^s_\delta \Phi^s_\delta(x-y)dy\\
\label{Eq:Green31}
    =&\int_{\O}D^s_\delta v(y)\cdot D^s_\delta \Phi^s_\delta(x-y)dy
        +\int_{\Rn\setminus\O}D^s_\delta v(y)\cdot D^s_\delta \Phi^s_\delta(x-y)dy.
\end{align}
We will apply Green's first identity to each of the integrals above. For the first integral, we obtain 
\begin{equation}
\label{Eq:Green32}
    \int_{\O}D^s_\delta v(y)\cdot D^s_\delta \Phi^s_\delta(x-y)dy
    =\int_{\Gamma_{\pm\d}}\Phi^s_\d\mathcal{N}D^s_\d v(y)dy-\int_{\O_{-\delta}}\Phi^s_\delta(x-y)\Delta_\delta^sv(y)dy.
\end{equation}
For the second integral in~\eqref{Eq:Green31}, we use Green's first identity with $\O$ replaced by $\R^n\setminus\O$:
\begin{multline*}
    \int_{\Rn\setminus\O}D^s_\delta v(y)\cdot D^s_\delta \Phi^s_\delta(x-y)dy\\
    =\int_{\Rn\setminus\O_{\delta}}v(y)\Delta^s_\d \Phi^s_\delta(x-y)dy
        -c_{n,s} \int_{\Gamma_{\pm\d}}v(y)\pv_y
            \int_{\Rn\setminus\O} \frac{D^s_\d \Phi^s_\d(x-z)}{|z-y|}
            \cdot\frac{z-y}{|z-y|}\frac{w_{\d}(z-y)}{|z-y|^{n-1+s}} \, dz\, dy.
\end{multline*}
Note that $\Delta^s_\d \Phi^s_\d$ is well-defined, since $x\in\O_{-\d}$ implies $y\mapsto \Phi^s_\d(x-y)$ is smooth on $\Rn\setminus\O$. In fact, in view of Theorem~\ref{th: NL Green formula Laplacian}, we find $y\mapsto\Delta_\d^s\Phi^s_\d(x-y)=0$ on $\Rn\setminus\O_\d$. Thus,
\begin{align*}
    \int_{\Rn\setminus\O}D^s_\delta v(y)\cdot D^s_\delta \Phi^s_\delta(x-y)\,dy
    &=c_{n,s}\int_{\Gamma_{\pm\d}}v(y)\pv_y\int_{\O} \frac{D^s_\d \Phi^s_\d(x-z)}{|z-y|}
        \cdot\frac{z-y}{|z-y|}\frac{w_{\d}(z-y)}{|z-y|^{n-1+s}} \, dz \, dy \\
    &\;-c_{n,s} \int_{\Gamma_{\pm\d}}v(y)\pv_y
            \int_{\Rn} \frac{D_\d^s\Phi^s_\d(x-z)}{|z-y|}
            \cdot\frac{z-y}{|z-y|}\frac{w_{\d}(z-y)}{|z-y|^{n-1+s}} \, dz\, dy.
\end{align*}
We now verify that the last integral is zero. Fix $y\in\Gamma_\d$. Using the definition of $\rho_\d$ in~\eqref{kernel} and Lemma~\ref{lem: kernel primitive}, we may rewrite the integrand as
\[
    (n+1-s)D_\d^s\Phi^s_\d(x-z)\cdot\frac{y-z}{|y-z|}\frac{\rho_\d(y-z)}{|y-z|}
    =-D_\d^s\Phi^s_\d(x-z)\cdot\nabla Q_\d^s(y-z),\quad z\in \Rn\setminus\{x,y\}.
\]
With the change of variables $z\leftrightarrow x-z$, we find that
\[
    \int_{\Rn} \frac{D_\d^s\Phi^s_\d(x-z)}{|z-y|}
            \cdot\frac{z-y}{|z-y|}\frac{w_{\d}(z-y)}{|z-y|^{n-1+s}} \, dz
    =-(D_\d^s \Phi^s_\d*\nabla Q_\d^s)(x-y).
\]
This convolution and its Fourier transform, with respect to $y$, are well-defined in the distributional sense (see~\cite{BeCuMC22}). Using~\eqref{Eq:FourierNLGradG}, we determine that $\xi\mapsto\widehat{(D^s_\d \Phi^s_\d* \nabla Q^s_\d)}(x-\xi)=1$. It follows that
\[
    \pv_y\int_{\Rn} \frac{D_\d^s\Phi^s_\d(x-z)}{|z-y|}
    \cdot\frac{z-y}{|z-y|}\frac{w_{\d}(z-y)}{|z-y|^{n-1+s}} \, dz
    =\delta_0(x-y).
\]
As $x\in\O_{-\d}\cap\Gamma_{\pm\d}=\emptyset$, we conclude that
\[
    \int_{\Rn\setminus\O}D^s_\delta v(y)\cdot D^s_\delta \Phi^s_\delta(x-y)dy
    =c_{n,s}\int_{\Gamma_{\pm\d}}v(y)\pv_y\int_{\O} \frac{D^s_\d \Phi^s_\d(x-z)}{|z-y|}
        \cdot\frac{z-y}{|z-y|}\frac{w_{\d}(z-y)}{|z-y|^{n-1+s}} \, dz \, dy
\]
Incorporating this and~\eqref{Eq:Green32} into~\eqref{Eq:Green31}, produces the desired identity.
 
\end{proof}
			
	\section[Nonlocal Helmholtz decomposition]{Nonlocal Helmholtz decomposition}\label{NGD}
	
	Finally, we devote this section to the computation of the Helmholtz decomposition in this nonlocal framework. We first derive it for smooth functions, providing a structure of the potentials determining the $\curl^s_\d$-free and $\diver^s_\d$-free components. Afterward, we provide a generalization to more general functional spaces and study certain properties potentially relevant for projections of a vector field onto one of its Helmholtz components, such as orthogonality and uniqueness. 
 
	

		\begin{theo} \label{th: NL Helmholtz decomposition}
  Let $u \in C^\infty_c(\R^3,\R^3)$. Then $u$ can be decomposed into a $\curl^s_\d$-free component plus a $\diver^s_\d$-free component:
			\begin{equation} \label{eq: NL Helmholtz decomposition}
				u(x)=D^s_\d \beta(x) - \curl^s_\d F(x) \qquad x \in \O
			\end{equation}
			where for $x\in \O_\d$
\[
    \beta(x)=\int_{\O_{-\d}}  \diver^s_\d u (y) \Phi^s_\d(x-y) \, dy
        -\int_{\Gamma_{\pm\d}}\Phi^s_\d(x-y)\mathcal{N}u(y)\,dy
\]
and
\[
					F(x)=
    \int_{\O_{-\d}} \curl^s_\d u(y) \Phi^s_\d(x-y) \, dy - 	c_{n,s}\int_{\Gamma_{\pm\d}} \Phi^s_\d(x-y)\pv_y\int_{\O} \frac{u(z)}{|y-z|} \times \frac{y-z}{|y-z|}\frac{w_{\d}(y-z)}{|y-z|^{2+s}} \, dz \, dy
\]
		\end{theo}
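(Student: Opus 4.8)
The plan is to reduce the statement to the whole–space version of the decomposition and then show that the collar integrals only modify the whole–space potentials by a term supported outside $\O$.

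First I would record the decomposition on $\R^3$: for $v\in C^\infty_c(\R^3,\R^3)$, writing $\beta_v:=\Phi^s_\d*\diver^s_\d v$ and $F_v:=\Phi^s_\d*\curl^s_\d v$, one has $v=D^s_\d\beta_v-\curl^s_\d F_v$. Indeed $\diver^s_\d v,\curl^s_\d v\in C^\infty_c(\R^3)$, convolution with $\Phi^s_\d$ commutes with $D^s_\d,\curl^s_\d,\Delta^s_\d$ (all are Fourier multipliers built from $\widehat{Q^s_\d}$; cf.\ Corollary~\ref{cor: Fourier transform of the nonlocal gradient} and \cite{BeCuMC22}), and then the third identity of Proposition~\ref{Prop: vectorial linear identities} together with $\Delta^s_\d=\diver^s_\d D^s_\d$ and Theorem~\ref{th: NL Green formula Laplacian} gives
\[
 D^s_\d\beta_v-\curl^s_\d F_v=\Phi^s_\d*\bigl(D^s_\d\diver^s_\d v-\curl^s_\d\curl^s_\d v\bigr)=\Phi^s_\d*\Delta^s_\d v=(\Delta^s_\d\Phi^s_\d)*v=\delta_0*v=v.
\]
Applying this with $v=u$, it is enough to prove that the potentials $\beta,F$ in the statement and the whole–space potentials $\beta_u,F_u$ agree after applying $D^s_\d$, resp.\ $\curl^s_\d$, on $\O$.

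Next I would establish the boundary identity linking the two representations of the potentials. Put $w:=u\,\chi_{\R^3\setminus\O}$. Since $w_\d$ is radial with support in $B(0,\d)$, the base–point term drops by odd symmetry from the principal values defining $\diver^s_\d$ and $\curl^s_\d$, so for every $y$
\[
 \diver^s_\d u(y)=-c_{n,s}\,\pv_y\int_{\R^3}\frac{u(z)}{|y-z|}\cdot\frac{y-z}{|y-z|}\frac{w_\d(y-z)}{|y-z|^{n+s-1}}\,dz,
\]
and the cross–product version holds for $\curl^s_\d u$. Splitting this integral as $\int_\O+\int_{\R^3\setminus\O}$ and comparing with Definition~\ref{def: NL normal direction}, the $\O$–part is $-\mathcal{N}u(y)$ (for the curl, minus the cross–product boundary term appearing in $F$), while the $\R^3\setminus\O$–part is precisely $\diver^s_\d w(y)$ (resp.\ $\curl^s_\d w(y)$); hence on $\Gamma_{\pm\d}$ one obtains $\diver^s_\d u+\mathcal{N}u=\diver^s_\d w$ and its curl analogue. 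Because $B(y,\d)\subset\O$ forces $w\equiv0$ on $B(y,\d)$, one has $\diver^s_\d w=\curl^s_\d w=0$ on $\O_{-\d}$, and since $B(y,\d)\cap\O=\emptyset$ for $y\in\R^3\setminus\O_\d$ one has $\diver^s_\d u=\diver^s_\d w$, $\curl^s_\d u=\curl^s_\d w$ there. Feeding the decomposition $\R^3=\O_{-\d}\cup\Gamma_{\pm\d}\cup(\R^3\setminus\O_\d)$ into the convolution defining $\beta_u$ and using these facts yields $\beta_u-\beta=\Phi^s_\d*\diver^s_\d w$ on $\O_\d$, and likewise $F_u-F=\Phi^s_\d*\curl^s_\d w$ on $\O_\d$.

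To finish, I would apply the first step to $v=w$, obtaining $D^s_\d(\Phi^s_\d*\diver^s_\d w)-\curl^s_\d(\Phi^s_\d*\curl^s_\d w)=w$, and then conclude, for $x\in\O$,
\[
 D^s_\d\beta(x)-\curl^s_\d F(x)=\bigl(D^s_\d\beta_u-\curl^s_\d F_u\bigr)(x)-\bigl(D^s_\d(\Phi^s_\d*\diver^s_\d w)-\curl^s_\d(\Phi^s_\d*\curl^s_\d w)\bigr)(x)=u(x)-w(x)=u(x),
\]
since $w\equiv0$ on $\O$; this is \eqref{eq: NL Helmholtz decomposition}. The main obstacle is exactly this last application of the whole–space decomposition to the non–smooth truncation $w$: it is only a bounded compactly supported jump function (lying in $H^{s,p,\d}$ merely when $sp<1$), so one must either argue distributionally — using $\widehat{Q^s_\d}>0$ everywhere, \cite{BeCuMC22}, so that all the multipliers above make sense as tempered distributions — or mollify $w$ and pass to the limit. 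A secondary nuisance is the principal–value bookkeeping near $\partial\O$ in the second step (the odd–symmetry vanishing of the base–point term, the absolute convergence — via $|\Phi^s_\d(z)|\le C|z|^{-(n-2s)}$ and $|V^s_\d(z)|\le C|z|^{-(n-s)}$ — legitimising the interchange of integrals in $\beta_u,F_u$, and the fact that $\mathcal{N}$ carries a principal value on $\Gamma_{-\d}$ but not on $\Gamma_\d$).
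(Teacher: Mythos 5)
Your proposal is correct in outline but follows a genuinely different route from the paper. The paper starts from $u(x)=\Delta^s_\d\int_\O u(y)\Phi^s_\d(x-y)\,dy$, applies the identity $\Delta^s_\d=D^s_\d\diver^s_\d-\curl^s_\d\curl^s_\d$ of Proposition \ref{Prop: vectorial linear identities}, and then computes $\beta(x)=\diver^s_\d\int_\O u(y)\Phi^s_\d(x-y)\,dy$ directly: the basic identities of Proposition \ref{prop: basic identities} move the nonlocal derivative onto the translate $\Phi^s_{\d,x}$ in the $y$-variable, and the integration-by-parts Theorems \ref{th: nonlocal intergration by parts alt} and \ref{th: nonlocal intergration by parts curl alt} (applicable to $\Phi^s_{\d,x}$ thanks to Corollary \ref{cor: G in Hspd}) then produce exactly the volume and collar terms in the statement. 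You instead prove a whole-space decomposition for $C^\infty_c$ fields and interpret the collar integrals as the correction $\Phi^s_\d*\diver^s_\d w$, $\Phi^s_\d*\curl^s_\d w$ coming from the truncation $w=u\,\chi_{\R^3\setminus\O}$; your identity $\diver^s_\d u+\mathcal{N}u=\diver^s_\d w$ on $\Gamma_{\pm\d}$ (and its curl analogue), together with $\diver^s_\d w=0$ on $\O_{-\d}$ and $\diver^s_\d u=\diver^s_\d w$ off $\O_\d$, is correct and gives $\beta_u-\beta=\Phi^s_\d*\diver^s_\d w$ as claimed. Your route is more conceptual — it explains the collar terms as precisely cancelling the exterior part of $u$ — while the paper's route stays entirely inside its already-proved toolbox and never has to differentiate a discontinuous field.

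The one genuine gap is the step you yourself flag: applying the whole-space identity $D^s_\d(\Phi^s_\d*\diver^s_\d w)-\curl^s_\d(\Phi^s_\d*\curl^s_\d w)=w$ to the jump function $w$. This is not covered by your first step ($w\notin C^\infty_c$, and $w\notin H^{s,p,\d}$ unless $sp<1$), and it is exactly where all the analytic content of Lemma \ref{lem: controlling the boundary term} and of the paper's integration-by-parts theorems would have to be redone: $\diver^s_\d w$ is only an $L^q$ function for $q<1/s$ (it blows up like $\dist(\cdot,\partial\O)^{-s}$), so one must make sense of the second nonlocal derivative of $\Phi^s_\d*\diver^s_\d w$ and justify that the distributional identity upgrades to the pointwise (or at least a.e.) statement on $\O$ required by the theorem. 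The two repair strategies you sketch (Fourier multipliers on $\mathcal{S}'$, or mollification of $w$ plus a Young-inequality limit using $D^s_\d\Phi^s_\d=V^s_\d\in L^1_{\loc}$) are both plausible, but neither is carried out, and the mollification route only yields equality a.e.\ without an additional continuity argument for $D^s_\d\beta-\curl^s_\d F$. Until that step is supplied, the proof is incomplete precisely at the point where the paper's argument, by contrast, invokes Theorems \ref{th: nonlocal intergration by parts alt} and \ref{th: nonlocal intergration by parts curl alt} and is done.
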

		\begin{proof}
			By Theorem \ref{th: NL Green formula Laplacian}, for every $u \in C^\infty_c(\Rn)$, we have
			\begin{equation*}
				u(x)=\left(\int_{\O} u(y) \Delta_\d^s \Phi^s_\d(x-y) \, dy \right)=\Delta_\d^s  \int_{\O} u(y) \Phi^s_\d(x-y) \, dy,  \quad x \in \O.
			\end{equation*}
			Using Proposition \ref{Prop: vectorial linear identities} identity \emph{3)} we can write
			\begin{equation*}
				u(x)=D^s_\d \diver^s_\d \int_{\O} u(y) \Phi^s_\d(x-y) \, dy   -\curl_\d^s \curl^s_\d \int_{\O} u(y) \Phi^s_\d(x-y) \, dy .
			\end{equation*}
			We now focus on the first term on the right-hand side.
			\begin{equation*}
				\beta(x):=\diver^s_\d \int_{\O} u(y) \Phi^s_\d(x-y) \, dy
			\end{equation*}
			 The regularity assumed on $u$ and the fact that $D^s_\d \Phi^s_\d=V^s_\d \in L^1_{\loc}(\Rn,\Rn)$ (Corollary \ref{cor: G in Hspd} and Theorem \ref{Th: nonlocal version of FTC}) allows us to interchange the integral with that of the nonlocal derivative. Recalling Proposition \ref{prop: basic identities} \emph{\ref{prop: basic identity b})} and \emph{\ref{prop: basic identity c})} and the fact that $\Phi^s_\d$ is radial, we have
			\begin{equation*}
				\beta(x)= \int_{\O} u(y) D^s_\d \f^s_{\d,y}(x) \, dy
				= -\int_{\O} u (y) D^s_\d \f^s_{\d,x}(y) \, dy.
			\end{equation*}
		Now, Corollary \ref{cor: G in Hspd} implies $\f^s_{\d,x} \in H^{s,\bar{p},\d}(\O)$ with $1\leq\bar{p}<\frac{n}{n-s}$, $x\in \O$ and $\f^s_{\d,x} \in C^{\infty}(\Rn\backslash \{x\})$. Thus, $\f^s_{\d,x}$ verifies the hypothesis of Theorem \ref{th: nonlocal intergration by parts alt}. Consequently,
		\begin{equation*}
				\beta(x)=\int_{\O_{-\d}}  \diver^s_\d u (y) \Phi^s_\d(x-y) \, dy
        -\int_{\Gamma_{\pm\d}}\Phi^s_\d(x-y)\mathcal{N}u(y)\,dy
		\end{equation*}
  We obtain the result factoring out a minus sign on the boundary terms and substituting the definition of $\mathcal{N}u$.
  
The formula for $F$ is similarly, by applying Proposition \ref{prop: basic identities} \emph{\ref{prop: basic identity d})} and \emph{\ref{prop: basic identity c})} and Theorem \ref{th: nonlocal intergration by parts curl alt} to
   \begin{equation*}
       F(x):=\curl^s_\d \int_{\O} u(y)\Phi^s_\d (x-y) \, dy.
   \end{equation*} 
   
  
		\end{proof}
		\begin{rem}
  \begin{enumerate}[a)]
      \item Notice that the vector $y-z$ points outwards for $y \in \Gamma_{\d}$. The function $x\mapsto\Phi^s_\d(\widetilde{x})$ plays the role of $x\mapsto\f(\tilde{x})=\frac{1}{4\pi |\widetilde{x}|}$ from the classical case and $x\mapsto\f^s(\tilde{x})=\frac{k_{n,s}}{|\widetilde{x}|^{n-2s}}$ from the pure fractional case (over the whole space).

     \item By using Theorem \ref{th: NL integration by parts 2} and Theorem \ref{th: NL integration by parts curl}, instead of their alternative versions, we see that the functionals from the Helmholtz decomposition can instead be written as 
     \begin{align*}
				\beta(x)&=\int_{\O}  \diver^s_\d u (z) \Phi^s_\d(x-z) \, dz\\
				&\hspace{50pt}- c_{n,s}\int_{\Gamma_{\d}} \int_{\O} \frac{ \Phi^s_\d(x-z) u(y) +\Phi^s_\d(x-y) u(z) }{|z-y|} \cdot \frac{y-z}{|z-y|} \frac{w_\d(z-y)}{|z-y|^{2+s}}  dz \, dy
				\intertext{and}
				F(x)&=\int_{\O}  \curl^s_\d u (z) \Phi^s_\d(x-z) \, dz\\
				&\hspace{50pt}-c_{n,s}\int_{\Gamma_{\d}} \int_{\O} \frac{ \Phi^s_\d(x-z) u(y) +\Phi^s_\d(x-y) u(z) }{|z-y|} \times \frac{y-z}{|z-y|} \frac{w_\d(z-y)}{|z-y|^{2+s}} \, dz \, dy.
			\end{align*}
   This might be relevant if the boundary conditions for $u$ left unprescribed in $\Gamma_{-\d}$.
   \end{enumerate}
 \end{rem}
Adding an extra condition on the boundary allows us to extend the result for functions in $H^{s,p,\d}(\O,\R^3)$.
 	\begin{theo} \label{th: NL Helmholtz decomposition 2}
  Let $u \in H^{s,p,\d}(\O,\R^3)$ such that $\mathcal{N}u=g \in L^p(\Gamma_{\pm\d})$ and $\mathcal{N}u_j \to \mathcal{N}u$ in $L^p(\Gamma_{\pm\d})$ for a sequence $\{u_j\}_{j\in \N} \subset C^{\infty}_c(\R^3)$, $u_j \to u$ in $H^{s,p,\d}(\O)$. Then $u$ can be decomposed into a $\curl^s_\d$-free component plus a $\diver^s_\d$-free component:
			\begin{equation} \label{eq: NL Helmholtz decomposition 2}
				u(x)=D^s_\d \beta(x) - \curl^s_\d F(x) \qquad a.e. \, x \in \O
			\end{equation}
			where $\beta$ and $F$ belong to $H^{s,p,\d}(\O)$ and have the same structure of Theorem \ref{th: NL Helmholtz decomposition}.
		\end{theo}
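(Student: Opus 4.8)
The plan is to run a density argument anchored on Theorem~\ref{th: NL Helmholtz decomposition}. Take the sequence $\{u_j\}_{j\in\N}\subset C^\infty_c(\R^3)$ supplied by the hypothesis, so that $u_j\to u$ in $H^{s,p,\d}(\O)$ and $\mathcal{N}u_j\to\mathcal{N}u=g$ in $L^p(\Gamma_{\pm\d})$. Applying Theorem~\ref{th: NL Helmholtz decomposition} to each $u_j$ gives $u_j=D^s_\d\beta_j-\curl^s_\d F_j$ on $\O$, with $\beta_j,F_j$ given by the explicit formulas there. I would then define $\beta$ and $F$ by those same formulas with $u$ in place of $u_j$, and proceed in three steps: (a) show $\beta,F\in H^{s,p,\d}(\O)$; (b) show $\beta_j\to\beta$ and $F_j\to F$ in $H^{s,p,\d}(\O)$; (c) pass to the limit in the decomposition of $u_j$.

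The key structural observation is that, since $\O_\d$ is, up to a null set, the disjoint union of $\O_{-\d}$ and $\Gamma_{\pm\d}$, each $\beta_j$ is a convolution $\Phi^s_\d*h_j$ with $h_j:=(\diver^s_\d u_j)\chi_{\O_{-\d}}-(\mathcal{N}u_j)\chi_{\Gamma_{\pm\d}}$, and similarly $F_j=\Phi^s_\d*\tilde h_j$, where $\tilde h_j$ is assembled from $\curl^s_\d u_j$ on $\O_{-\d}$ and the cross-product boundary operator on $\Gamma_{\pm\d}$ (the latter obeying the same estimates as $\mathcal{N}$ by the cross-product version of Lemma~\ref{lem: controlling the boundary term}, compare Lemma~\ref{Lema operador lineal delta}). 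By Theorem~\ref{th: NL Green formula Laplacian}\,\emph{b)} and Corollary~\ref{cor: G in Hspd}, $\Phi^s_\d\in L^1_{\loc}(\Rn)$ and $D^s_\d\Phi^s_\d=V^s_\d\in L^1_{\loc}(\Rn,\Rn)$. Using the convolution representation $D^s_\d g=\nabla(Q^s_\d*g)$ of Proposition~\ref{Prop: convolution with the classical gradient} together with $D^s_\d\Phi^s_\d=V^s_\d$, one gets $D^s_\d(\Phi^s_\d*h)=V^s_\d*h$ for compactly supported $h$; then Young's inequality on the bounded region in which all arguments vary shows that for every $h\in L^p(\O_\d)$ one has $\Phi^s_\d*h\in L^p(\O_\d)$ and $D^s_\d(\Phi^s_\d*h)=V^s_\d*h\in L^p(\O)$, hence $\Phi^s_\d*h\in H^{s,p,\d}(\O)$ (via the distributional characterization invoked in Corollary~\ref{cor: G in Hspd}, $\O$ being bounded Lipschitz). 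Now $h:=(\diver^s_\d u)\chi_{\O_{-\d}}-g\,\chi_{\Gamma_{\pm\d}}\in L^p(\O_\d)$, because $\diver^s_\d u$ is a fixed linear combination of the entries of $D^s_\d u\in L^p(\O)$ (Remark~\ref{rem: nonlocal Schwartz theorem}) and $g\in L^p(\Gamma_{\pm\d})$ by hypothesis; thus $\beta\in H^{s,p,\d}(\O)$, and analogously $F\in H^{s,p,\d}(\O)$. For the convergence, $h_j\to h$ in $L^p(\O_\d)$: the volume parts converge because $D^s_\d u_j\to D^s_\d u$ in $L^p(\O)$, and the boundary part because $\mathcal{N}u_j\to g$ in $L^p(\Gamma_{\pm\d})$; therefore $\|\beta_j-\beta\|_{H^{s,p,\d}(\O)}\le C\|h_j-h\|_{L^p(\O_\d)}\to0$ by Young, and likewise $F_j\to F$.

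It then remains to let $j\to\infty$ in $u_j=D^s_\d\beta_j-\curl^s_\d F_j$. Since the $H^{s,p,\d}(\O)$-norm dominates $\|D^s_\d\,\cdot\,\|_{L^p(\O)}$ and $\curl^s_\d$ is a fixed linear combination of the entries of $D^s_\d$ (Remark~\ref{rem: nonlocal Schwartz theorem}), we obtain $D^s_\d\beta_j\to D^s_\d\beta$ and $\curl^s_\d F_j\to\curl^s_\d F$ in $L^p(\O)$; combined with $u_j\to u$ in $L^p(\O)$ this yields $u=D^s_\d\beta-\curl^s_\d F$ a.e.\ in $\O$. I expect the main obstacle to be the curl component $F$: its boundary term is governed by the cross-product boundary operator, whereas the hypothesis only provides $L^p$-convergence of $\mathcal{N}u_j$; pushing this through Lemma~\ref{lem: controlling the boundary term} forces either a range restriction (in essence $p<1/s$) or an additional hypothesis, parallel to the stated one on $\mathcal{N}u$, that the cross-product boundary operator applied to $u_j$ converges in $L^p(\Gamma_{\pm\d})$. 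A subsidiary technical point is the rigorous justification of $D^s_\d(\Phi^s_\d*h)=V^s_\d*h$ for $h$ merely in $L^p$, which is handled by first treating smooth $h$ through Proposition~\ref{Prop: convolution with the classical gradient} and Corollary~\ref{cor: G in Hspd} and then passing to the limit.
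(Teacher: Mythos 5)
Your proposal is correct and follows essentially the same route as the paper: a density argument anchored on Theorem~\ref{th: NL Helmholtz decomposition}, with Young's convolution inequality applied separately to the volume terms (controlled by $\|D^s_\d u_j - D^s_\d u\|_{L^p(\O)}$ together with $\|\Phi^s_\d\|_{L^1(\O_\d)}$ and $\|V^s_\d\|_{L^1(\O_\d)}$) and to the collar terms (controlled by $\|\mathcal{N}u_j-\mathcal{N}u\|_{L^p(\Gamma_{\pm\d})}$), followed by passage to the limit in $u_j=D^s_\d\beta_j-\curl^s_\d F_j$. The subtlety you flag for the $\diver^s_\d$-free component is well observed: the boundary term of $F$ involves the cross-product analogue of $\mathcal{N}$ rather than $\mathcal{N}$ itself, and the paper's proof handles it only with the phrase ``the same argument applies,'' implicitly assuming the cross-product boundary operator applied to $u_j$ converges as well.
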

  \begin{proof}
      Let $u\in H^{s,p,\d}(\O,\R^3)$ then, by hypothesis, there exists $\{u_j\}_{j\in \N} \subset C^{\infty}_c(\R^3,\R^3)$ such that $u_j \to u$ in $H^{s,p,\d}$. In addition, by assumption, $\mathcal{N}u_j \to \mathcal{N}u$ in $L^p(\Gamma_{\pm\d})$. Applying Theorem \ref{th: NL Helmholtz decomposition} we have that
      \begin{equation} \label{eq: NL HHD 0}
          u_j(x)=D^s_\d \beta_j(x) - \curl^s_\d F_j(x) \qquad x \in \O.
      \end{equation}
      Now, we focus on the sequences $\beta_j$ and $D^s_\d \beta_j$, in particular, on the integrability of their terms. By Young convolution inequality we have that 
      \begin{equation} \label{eq: NL HHD 1}
          \left(\int_{\O_\d} \left|\int_{\O_{-\d}}  \diver^s_\d (u_j-u) (y) \Phi^s_\d(x-y) \, dy \right|^p \, dx\right)^\frac{1}{p} \leq \|D^s_\d u_j-D^s_\d u\|_{L^p(\O,\R^3)} \|\Phi^s_\d \|_{L^1(\O_\d)} 
      \end{equation}
      where we have used that $\diver^s_\d u=\tr D^s_\d u$ (\cite[Lemma 3.5]{BeCuMC22b}). 
      The same argument applies to the boundary term, yielding
      \begin{equation} \label{eq: NL HHD 2}
          \left(\int_{\O_\d} \left|\int_{\Gamma_{\pm \d}} \Phi^s_\d(x-y)\,\left[\mathcal{N}u_j(y)-\mathcal{N}u(y) \right] \, dy\right|^p \, dx\right)^\frac{1}{p} \leq \|\mathcal{N}u_j-\mathcal{N}u \|_{L^p(\Gamma_{\pm \d})} \|\Phi^s_\d \|_{L^1(\O_\d)}
      \end{equation}
      Hence, since by Theorem \ref{th: NL Green formula Laplacian} \emph{b)} $\|\Phi^s_\d \|_{L^1(\O_\d)}$ is bounded, ${\beta_j} \to \beta$ in $L^p(\O_\d)$ where $\beta$ is defined as in Theorem \ref{th: NL Helmholtz decomposition}. As for $D^s_\d \beta_j$ we have that, 
\begin{equation} \label{eq: NL HHD 3}
          \left(\int_{\O} \left|\int_{\O_{-\d}}  \diver^s_\d (u_j-u) (y) D^s_\d\Phi^s_\d(x-y) \, dy \right|^p \, dx\right)^\frac{1}{p} \leq \|D^s_\d u_j-D^s_\d u\|_{L^p(\O,\R^3)} \|V^s_\d \|_{L^1(\O_\d)} 
      \end{equation}
      by Corollary \ref{cor: G in Hspd}. Now,  $\|V^s_\d \|_{L^1(\O_\d)}$ is bounded thanks to Theorem \ref{Th: nonlocal version of FTC}. As the argument is analogous for the boundary term, this justifies the derivation under the integral sign taking into account that $D^s_\d \beta_j =\nabla (Q^s_\d * \beta_j)$, the integrability of $Q^s_\d$ and \eqref{eq: NL HHD 1},\eqref{eq: NL HHD 2} and \eqref{eq: NL HHD 3}. Consequently, 
      \begin{equation} \label{eq: NL HHD 4}
          \beta_j - \beta \to 0 \qquad \text{ in } H^{s,p,\d}(\O).
      \end{equation}
      Therefore, by definition, $\beta \in H^{s,p,\d}(\O)$  and $\beta$ has the structure specified in Theorem \ref{th: NL Helmholtz decomposition}.

      On the other hand, as 
      \begin{equation*}
          \curl^s_\d u_j=\Large(D^s_{\d,3} -D^s_{\d,3}, D^s_{\d,1} -D^s_{\d,3} ,D^s_{\d,2} -D^s_{\d,1} \Large) ,
      \end{equation*}
      its norm is bounded by that of $\|D^s_\d u\|_{L^p(\O,\R^3)}$. Thus, we can apply the same argument obtaining that, for $F$ defined as in Theorem \ref{th: NL Helmholtz decomposition},
      \begin{equation} \label{eq: NL HHD 5}
          F_j - F \to 0 \qquad \text{ in } H^{s,p,\d}(\O)
      \end{equation}
      Then, \eqref{eq: NL Helmholtz decomposition 2} follows from \eqref{eq: NL HHD 0}, \eqref{eq: NL HHD 4} and \eqref{eq: NL HHD 5}.
  \end{proof}
  We complement the previous theorem with orthogonality and uniqueness results. To be more precise, uniqueness is established for the terms $D^s_\d \beta$ and $\curl^s_\d F$ but not necessarily for the potentials. For this, we impose the analogous condition of the divergence-free component being tangential to the boundary.
  
  \begin{prop} \label{prop: uniqueness and orthgonality of NL HHD}
      Let $u\in C^{\infty}_c(\R^3)$. 
   The Helmholtz decomposition from Theorems \ref{th: NL Helmholtz decomposition} and \ref{th: NL Helmholtz decomposition 2} 
   is orthogonal and unique provided the $\diver^s_\d$-free component verifies $\mathcal{N} \curl^s_\d F=0$ in $\Gamma_{\pm\d}$.

    \end{prop}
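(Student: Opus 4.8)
The plan is to derive the orthogonality of the two components in $L^2(\O)$ first, and then read off uniqueness from it. The mechanism is the same throughout: since $\curl^s_\d F$ is $\diver^s_\d$-free by Proposition~\ref{Prop: vectorial linear identities}~\emph{2)}, the nonlocal integration by parts of Theorem~\ref{th: nonlocal intergration by parts alt} (equivalently Theorem~\ref{th NL Green first identity}) reduces the pairing $\int_\O D^s_\d\beta\cdot\curl^s_\d F\,dx$ to a single boundary integral over $\Gamma_{\pm\d}$, which the standing hypothesis $\mathcal{N}\curl^s_\d F=0$ on $\Gamma_{\pm\d}$—the nonlocal analogue of the $\diver^s_\d$-free part being tangential to $\partial\O$—makes vanish. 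I would begin by recording, for $u\in C^\infty_c(\R^3,\R^3)$, that $\diver^s_\d u$ and $\curl^s_\d u$ are smooth and bounded and, by Lemma~\ref{lem: controlling the boundary term}, that $\mathcal{N}u\in L^q(\Gamma_{\pm\d})$ for every $q<\frac{1}{s}$; then, invoking the bounds $|\Phi^s_\d(x)|\le M|x|^{-(n-2s)}$ and $|D^s_\d\Phi^s_\d(x)|=|V^s_\d(x)|\le M|x|^{-(n-s)}$ of Theorem~\ref{th: NL Green formula Laplacian} and Corollary~\ref{cor: G in Hspd}, I would conclude, as in the proof of Theorems~\ref{th: NL Helmholtz decomposition} and~\ref{th: NL Helmholtz decomposition 2}, that $\beta\in H^{s,p,\d}(\O)\cap L^{q_1}(\Gamma_{\pm\e})$ and $\curl^s_\d F\in H^{s,p',\d}(\O,\R^3)\cap L^{q_2}(\Gamma_{\pm\e})$ for suitable $q_1,q_2>\frac{1}{1-s}$, with $D^s_\d\beta,\curl^s_\d F\in L^2(\O)$. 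This is exactly what is needed to legitimately apply the integration by parts theorems of Section~\ref{NVC} to the pair $(\beta,\curl^s_\d F)$ and, as signalled in Remark~\ref{rem: comment on the generalisation for second NL derivatives}, to keep the identity $\diver^s_\d\curl^s_\d F=0$ valid on $\O_{-\d}$.

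For orthogonality, I would apply Theorem~\ref{th: nonlocal intergration by parts alt} with the scalar $\beta$ in the role of $u$ and the vector field $\curl^s_\d F$ in the role of $\f$, obtaining
\[
    \int_{\O} D^s_\d\beta(x)\cdot\curl^s_\d F(x)\,dx
    = -\int_{\O_{-\d}}\beta(y)\,\diver^s_\d\curl^s_\d F(y)\,dy
    + \int_{\Gamma_{\pm\d}}\beta(x)\,\mathcal{N}\big(\curl^s_\d F\big)(x)\,dx .
\]
The volume term vanishes because $\diver^s_\d\curl^s_\d F=0$, and the boundary term vanishes by hypothesis, so $\int_\O D^s_\d\beta\cdot\curl^s_\d F\,dx=0$. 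Since $u=D^s_\d\beta-\curl^s_\d F$, the $\curl^s_\d$-free part $D^s_\d\beta$ and the $\diver^s_\d$-free part $-\curl^s_\d F$ are orthogonal in $L^2(\O)$.

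For uniqueness, I would take two decompositions $u=D^s_\d\beta_1-\curl^s_\d F_1=D^s_\d\beta_2-\curl^s_\d F_2$, each of the form given in Theorem~\ref{th: NL Helmholtz decomposition} (or~\ref{th: NL Helmholtz decomposition 2}) and each satisfying $\mathcal{N}\curl^s_\d F_i=0$ on $\Gamma_{\pm\d}$, subtract them to obtain a single field $w:=D^s_\d(\beta_1-\beta_2)=\curl^s_\d(F_1-F_2)\in L^2(\O)$ on $\O$, and rerun the previous computation with $\beta_1-\beta_2$ and $\curl^s_\d(F_1-F_2)=\curl^s_\d F_1-\curl^s_\d F_2$, whose nonlocalized normal direction $\mathcal{N}(\curl^s_\d F_1)-\mathcal{N}(\curl^s_\d F_2)$ vanishes on $\Gamma_{\pm\d}$; this gives
\[
    \int_{\O}|w(x)|^2\,dx=\int_{\O}D^s_\d(\beta_1-\beta_2)(x)\cdot\curl^s_\d(F_1-F_2)(x)\,dx=0 ,
\]
hence $w=0$ a.e.\ on $\O$, i.e.\ $D^s_\d\beta_1=D^s_\d\beta_2$ and $\curl^s_\d F_1=\curl^s_\d F_2$ on $\O$, so the two components—though not the potentials—are uniquely determined, matching the classical picture. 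I expect the only genuine difficulty to lie in the first paragraph: checking that $\beta$, $\curl^s_\d F$ and their differences really carry the extra integrability near $\partial\O$ demanded by Theorem~\ref{th: nonlocal intergration by parts alt} and that the composition identity $\diver^s_\d\curl^s_\d(\cdot)=0$ persists in those Riesz-potential-type function spaces. The two orthogonality computations themselves are then immediate consequences of results already established.
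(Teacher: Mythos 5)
Your proposal is correct and follows essentially the same route as the paper: orthogonality via Theorem \ref{th: nonlocal intergration by parts alt} combined with $\diver^s_\d\curl^s_\d F=0$ (Proposition \ref{Prop: vectorial linear identities}) and the vanishing of the collar term under the hypothesis $\mathcal{N}\curl^s_\d F=0$, followed by uniqueness from pairing the difference of two decompositions with $\curl^s_\d(F_1-F_2)$. Your version is, if anything, slightly more explicit than the paper's about the integrability needed to invoke the integration-by-parts theorem and about the fact that orthogonality must be applied to the difference decomposition, but the argument is the same.
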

    \begin{proof}
        Firstly observe orthogonality, i.e.
        \begin{equation} \label{eq: NL orthoHHD}
            \int_{\O} D^s_\d \beta(x) \cdot \curl^s_\d F(x)=0.
        \end{equation}
        Indeed, by Theorem \ref{th: nonlocal intergration by parts alt} we have
        \begin{equation*}
             \int_{\O} D^s_\d \beta(x)  \cdot \curl^s_\d F(x)= \int_{\O_{-\d}}  \beta(x) \diver^s_\d \curl^s_\d F(x)=0
        \end{equation*}
        where the boundary terms vanished as a consequence of the boundary conditions and in the last step we have used Proposition \ref{Prop: vectorial linear identities} \emph{2)}.

        To prove uniqueness, we follow the standard strategy. Assume there exists $D^s_\d \beta_1$, $D^s_\d \beta_2$, $\curl^s_\d F_1$, and $\curl^s_\d F_2 $ such that
        \begin{equation*}
				D^s_\d \beta_1(x) - \curl^s_\d F_1(x) =u(x)=D^s_\d \beta_2(x) - \curl^s_\d F_2(x) \qquad x \in \O.
			\end{equation*}
   Then,
    \begin{equation} \label{eq: two HHD}
				D^s_\d (\beta_1-\beta_2)(x) - \curl^s_\d (F_1-F_2)(x) =0 \qquad x \in \O.
			\end{equation}
   Taking the inner product of \eqref{eq: two HHD} with $\curl^s_\d (F_1-F_2)(x)$, by orthogonality, we obtain
   \begin{equation*}
       0=\int_{\O} \left|\curl^s_\d (F_1-F_2)(x) \right|^2 - \left( \curl^s_\d (F_1-F_2)(x)\right) D^s_\d (\beta_1-\beta_2)(x) \, dx=\int_{\O} \left|\curl^s_\d (F_1-F_2)(x) \right|^2.
   \end{equation*}
   This implies that $\curl^s_\d F_1=\curl^s_\d F_2$, and as a consequence of \eqref{eq: two HHD}, $D^s_\d \beta_1=D^s_\d \beta_2$.
    \end{proof}
\begin{rem}
        Last result may be extended by density to functions in the Hilbert space $H^{s,2,\d}(\O)$ if we assume the hypothesis from Theorem \ref{th: NL Helmholtz decomposition 2} and, in addition, that $\mathcal{N}\curl^s_\d F_j \to \mathcal{N}\curl^s_\d F=0$ in $L^2(\Gamma_{\pm \d},\Rn)$, where $\curl^s_\d F_j$ correspond to the $\diver^s_\d$-free component of each $u_j$.
    \end{rem}
    \begin{prop} \label{prop: zero NL div}   Let $u \in H^{s,p,\d}(\O,\R^3)$ such that $\mathcal{N}u=g \in L^p(\Gamma_{\pm\d})$ and $\mathcal{N}u_j \to \mathcal{N}(u)$ in $L^p(\Gamma_{\pm\d})$ for a sequence $\{u_j\}_{j\in \N} \subset C^{\infty}_c(\R^3)$, $u_j \to u$ in $H^{s,p,\d}(\O)$. 
        Assume, in addition, $\diver^s_\d u=0$ and consider one of the following two hypothesis
    \begin{enumerate}
    \item[$(H1)$] $\mathcal{N} u=0$ in $\Gamma_{\pm\d}$, 
    \item[$(H2)$] $\mathcal{N} D^s_\d \beta=0$ in $\Gamma_{\pm\d}$ and $u\in C^{\infty}_c(\R^3,\R^3)$,
    \end{enumerate}
    where $D^s_\d \b$ is the term from Theorem \ref{th: NL Helmholtz decomposition}. Then there exists $F:\R^3 \to \R^3$ such that $u=\curl^s_\d F$ in $\O$ and $\mathcal{N} u=\mathcal{N}\curl^s_\d F$ in $\Gamma_{\pm\d}$.
    \end{prop}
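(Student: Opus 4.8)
The plan is to push $u$ through the nonlocal Helmholtz decomposition and to show that, under either of the two hypotheses, the $\curl^s_\d$-free component is killed, so $u$ reduces to a pure nonlocal curl. First I would invoke Theorem~\ref{th: NL Helmholtz decomposition} (if $u\in C^\infty_c$) or Theorem~\ref{th: NL Helmholtz decomposition 2} (if $u\in H^{s,p,\d}$) to write $u=D^s_\d\beta-\curl^s_\d F$ in $\O$, with $\beta$ and $F$ the explicit potentials recorded there. The target is $D^s_\d\beta=0$ in $\O$: once this holds, $u=\curl^s_\d(-F)$ in $\O$, and after renaming $F\leftarrow -F$ (which does not alter its structure) we obtain $u=\curl^s_\d F$ in $\O$. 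Since the operator $\mathcal N$ of Definition~\ref{def: NL normal direction} is linear and only involves the values of its argument on $\O$, the a.e.\ equality $u=\curl^s_\d F$ on $\O$ then forces $\mathcal N u=\mathcal N\curl^s_\d F$ on $\Gamma_{\pm\d}$, which is the assertion.

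Under $(H1)$ this is immediate: with $\diver^s_\d u=0$ and $\mathcal N u=0$ on $\Gamma_{\pm\d}$, both integrals in the formula
\[
    \beta(x)=\int_{\O_{-\d}}\diver^s_\d u(y)\,\Phi^s_\d(x-y)\,dy-\int_{\Gamma_{\pm\d}}\Phi^s_\d(x-y)\,\mathcal N u(y)\,dy
\]
vanish, so $\beta\equiv 0$ on $\O_\d$ and hence $D^s_\d\beta=0$; for $u\in H^{s,p,\d}$ one reaches the same conclusion through the approximating sequence of Theorem~\ref{th: NL Helmholtz decomposition 2}.

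Under $(H2)$, with $u\in C^\infty_c(\R^3,\R^3)$, $\diver^s_\d u=0$ and $\mathcal N D^s_\d\beta=0$ on $\Gamma_{\pm\d}$, I would run an energy argument. First I would check that $\beta$ is $\Delta^s_\d$-harmonic on $\O_{-\d}$: applying $\diver^s_\d$ to $u=D^s_\d\beta-\curl^s_\d F$ (the restriction to $\O$, so that the outer $\diver^s_\d$ samples only $\O$ and lives on $\O_{-\d}$) and using $\diver^s_\d\curl^s_\d F=0$ from Proposition~\ref{Prop: vectorial linear identities}(2) together with the symmetry of second nonlocal derivatives (Proposition~\ref{Prop: nonlocal Schwartz theorem} and Remark~\ref{rem: nonlocal Schwartz theorem}) yields $\Delta^s_\d\beta=\diver^s_\d D^s_\d\beta=\diver^s_\d u=0$ on $\O_{-\d}$. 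Then I would apply Green's first identity (Theorem~\ref{th NL Green first identity}, in the generalised form of Remark~\ref{rem: comment on the generalisation for second NL derivatives}) with both arguments equal to $\beta$, obtaining
\[
    \int_{\O_{-\d}}\beta\,\Delta^s_\d\beta\,dy+\int_\O|D^s_\d\beta|^2\,dx=\int_{\Gamma_{\pm\d}}\beta\,\mathcal N D^s_\d\beta\,dx .
\]
The first term vanishes by harmonicity and the right-hand side vanishes by $(H2)$, so $\int_\O|D^s_\d\beta|^2=0$, i.e.\ $D^s_\d\beta=0$ in $\O$, and the proof is complete.

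The work — and the main obstacle — is confined to Case $(H2)$: one must carefully track the nested collars $\O\supset\O_{-\d}\supset\O_{-2\d}$ on which the iterated nonlocal operators are successively defined (so that $\Delta^s_\d\beta=0$ genuinely holds on the set $\O_{-\d}$ over which Green's identity integrates the divergence term), and, more seriously, one must verify that $\beta$ and $D^s_\d\beta$ lie in the spaces Green's identity demands, namely in appropriate $H^{s,p,\d}(\O)$ together with the extra integrability on a collar of $\partial\O$. This rests on combining the sharp behaviour of $\Phi^s_\d$ near the origin and at infinity (Theorem~\ref{th: NL Green formula Laplacian}, Corollary~\ref{cor: G in Hspd}), the local integrability of $V^s_\d=D^s_\d\Phi^s_\d$, and the mapping properties of $\mathcal N$ (Lemma~\ref{lem: controlling the boundary term}) — the very ingredients that make the potentials $\beta,F$ well defined in the first place — with, in borderline ranges of $(n,s)$, a localisation of the energy identity away from $\partial\O$. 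Everything else (Case $(H1)$ and the concluding identification $u=\curl^s_\d F$, $\mathcal N u=\mathcal N\curl^s_\d F$) is routine.
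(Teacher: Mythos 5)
Your proposal is correct and follows essentially the same route as the paper: under $(H1)$ the explicit formula for $\beta$ gives $\beta\equiv 0$ directly, and under $(H2)$ one shows $\Delta^s_\d\beta=0$ on $\O_{-\d}$ via $\diver^s_\d\curl^s_\d F=0$ and then kills $\int_\O|D^s_\d\beta|^2$ with Green's first identity, the boundary term vanishing by the hypothesis $\mathcal N D^s_\d\beta=0$. Your explicit justification that the boundary term vanishes because of $(H2)$ (rather than the paper's slightly imprecise reference to $\mathcal N u=0$) and your remark that $\mathcal N$ only samples values on $\O$, so $u=\curl^s_\d F$ on $\O$ propagates to $\Gamma_{\pm\d}$, are welcome clarifications but not a different argument.
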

    \begin{proof}
    The case assuming $(H1)$ is immediate by applying the formula provided in Theorem \ref{th: NL Helmholtz decomposition 2}, yielding $\beta=0$. Thus, let us assume $(H2)$. 
    Now, we apply $\diver^s_\d$ on $u$. Then, by the Helmholtz decomposition \eqref{eq: NL Helmholtz decomposition 2} and Proposition \ref{Prop: vectorial linear identities} \emph{2)} we have
        \begin{equation*}
            \Delta^s_\d \beta(x)=0 \qquad x \in \O_{-\d}.
        \end{equation*}
        If we multiply by $\beta$ and use the nonlocal Green's first identity (Theorem \ref{th NL Green first identity}), and that $\mathcal{N}u=0$ in $\O_\d\backslash\O_{-\d}$ so that the boundary term vanishes, then
        \begin{equation*}
            0=\int_{\O_{-\d}} \beta(x) \Delta^s_\d \beta(x)\, dx=\int_{\O} \left| D^s_\d \beta(x) \right|^2 \, dx.
        \end{equation*}
        Consequently, we obtain that $D^s_\d \beta=0$, which by \eqref{eq: NL Helmholtz decomposition}, leads us to
        \begin{equation*}
            u=\curl^s_\d F.
        \end{equation*}
    \end{proof}

\section{Conclusions}\label{Conclusions}
   The results of this paper focus on analytical studies of a nonlocal theory which was introduced in the study of nonlinear nonlocal calculus of variations \cite{BeCuMC22}, and in particular of polyconvex energy densities relvant for nonlocal nonlinear elasticity  in\cite{BeCuMC22b}. 
   The results here show a close correspondence with the classical theory, with vector identities, representation formulas, decompositions, and estimates closely resembling the differential setting. 

 The Helmholtz-Hodge decomposition has been shown to have large applicability in many fields (see the survey in \cite{bhatia2012helmholtz} on applications for the classical decomposition), so it is expected to serve as an important tool for nonlocal models that use these operators. This work shows an alternative version of it based on nonlocal operators, which might be useful for future nonlocal modelling.

    Theoretical and numerical investigations of this framework will be continued in future works and for a variety of models, where in particular we will also investigate convergence of nonlocal solutions to classical counterparts in the limit of the vanishing horizon, similar to results obtained in \cite{FossRaduYu}. Establishing well-posedness of systems with different types of boundary conditions remains an important direction of study as nonlocal models have seen successful implementations in a variety of fields and applications. Indeed, the nonlocal direction analogous of the normal direction of a vector field is addressed in this document for the first time on this nonlocal framework which, in the future, might be relevant for nonlocal Neumann boundary conditions. In particular such structure can be observed in most of the vector calculus results shown in this paper.  More technical results, such as defining trace operators for $H^{s,p,\delta}(\Omega)$ functions in the spirit of \cite{foss2021traces} will be important for theoretical and numerical studies.
    
\section*{Acknowledgements} 

\noindent The work of José Carlos Bellido has been supporetd by {\it Agencia Estatal de Investigaci\'on} (SPAIN) through grant PID2020-116207GB-I00.

\noindent The work of Javier Cueto has been supported by Fundaci\'on Ram\'on Areces. 

\noindent The work of Mikil Foss and Petronela Radu was supported by the award NSF – DMS 2109149. 


			\appendix
			
			\appendixpage
			
			\section{Auxiliary results }
 In this appendix we gather several technical results, needed for the arguments employed mainly in Section \ref{se: NL fundamental solution}. Concretely, Lemmas \ref{lem: distribution pv 1-2s} through \ref{le: bounds of G hat} are required in the proofs of Section \ref{se: NL fundamental solution}. We first introduce a technical result needed in the proofs of the divergence theorem and integration by parts to study converging sequences in $H^{s,p,\d}(\O)$, as well as in $L^q$, on a neighbourhood of the boundary of $\O$ for different integrability exponents. We recall that the result shown here is not a complete density study or another characterization of $H^{s,p,\d}(\O)$, but just a partial result approximating functions in those spaces through convolution by mollifiers over compactly contained sets in $\O$, which is enough for the analysis  in this document. For the equivalence of Definition  \ref{de: NL functional spaces density} with one based on the weak notion of the nonlocal gradient under the assumption of a Lipschitz boundary see \cite[Appendix B]{CuKrSc23}. 
 In particular, we show here that the nonlocal gradient operator can also be swapped with the convolution with a mollifier, in a domain compactly contained in $\O$. To this end, let $\eta$ be a standard mollifier, $\eta \in C^{\infty}_c(\Rn)$, $\supp \eta=B(0,\d)$, $\int_{\Rn} \eta(x)\, dx=1$ and $f\in L^1(\O)$. Then for every $x\in\O_{-\e\d}$ we define
				  \begin{equation*}
				  	f^{\e}(x):= f|_{\O}*\eta_\e(x)=\int_{\O_{-\e\d}} f(y)\eta_\e(x-y)\, dy=\int_{B(0,\e \d)} f(x-y)\eta_\e(y)\, dy,
				  \end{equation*}
			  where
				\begin{equation*}
					\eta_\e(x)=\frac{1}{\e^n}\eta\left(\frac{x}{\e}\right).
				\end{equation*}
The approximation result by convolution is as follows.
\begin{prop} \label{prop sequence with mollifiers}
Let $0<\d$, $0\leq s <1$ and $1\leq p< \infty$. Let $u\in H^{s,p,\d}(\O)$ and $\e>0$. Then
\begin{enumerate}[a)]
\item $D^s_\d (u* \eta_\e )(x)=(\eta_\e* D^s_\d u) (x)$ for $x\in \O_{-\e\d}$. \label{item mollifier nonlocal gradient}
\item $u^\e \to u$ in $L_{\loc}^p(\O_\d)$ and $D^s_\d u^\e \to D^s_\d u$ in $L^p_{\loc}(\O)$ as $\e \searrow 0$. \label{item mollifier sequence}
\end{enumerate}
\end{prop}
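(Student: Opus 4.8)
The plan is to exploit the convolution representation $D^s_\d u = \nabla(Q^s_\d * u)$ from Proposition \ref{Prop: convolution with the classical gradient}, which turns everything into standard facts about mollification of Sobolev functions. For part \emph{\ref{item mollifier nonlocal gradient})}, I would first establish the identity for $u \in C^\infty_c(\Rn)$ directly: by \eqref{eq:Dsd=-n} we have $D^s_\d u = \nabla u * Q^s_\d$ on $\Rn$, and convolution with $\eta_\e$ commutes with convolution with $Q^s_\d$ and with $\nabla$ on all of $\Rn$, so $D^s_\d(u * \eta_\e) = \nabla(u*\eta_\e) * Q^s_\d = (\nabla u * Q^s_\d) * \eta_\e = (D^s_\d u) * \eta_\e$. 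The subtlety for general $u \in H^{s,p,\d}(\O)$ is that $u$ is only known on $\O_\d$, so $u*\eta_\e$ and $D^s_\d u^\e$ only make sense on interior collars; I would restrict attention to $x \in \O_{-\e\d}$, where the integral defining $D^s_\d u^\e(x) = D^s_\d(u|_\O * \eta_\e)(x)$ only samples $u|_\O * \eta_\e$ on $B(x,\d) \subset \O_{-\e\d+\d}\subset\O_\d$ — wait, more carefully, $D^s_\d$ at $x$ needs values on $B(x,\d)$, and $u^\e$ at a point $z$ needs $u$ on $B(z,\e\d)$, so $D^s_\d u^\e(x)$ needs $u$ on $B(x, \d+\e\d)$; since $\e<1$ this is inside $\O_\d$ provided $x\in\O_{-\e\d}$ up to adjusting constants, and similarly $(\eta_\e * D^s_\d u)(x)$ needs $D^s_\d u$ on $B(x,\e\d)$, each value of which needs $u$ on a $\d$-ball, so again $u$ on $B(x,\d+\e\d)$. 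On this common region the smooth-function identity transfers by approximating $u$ in $H^{s,p,\d}$ on a slightly larger compactly contained subdomain by Definition \ref{de: NL functional spaces density}, using that both sides depend continuously (in $L^p_{\loc}$) on $u$.

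For part \emph{\ref{item mollifier sequence})}, the convergence $u^\e \to u$ in $L^p_{\loc}(\O_\d)$ is the classical statement that mollification converges in $L^p$ on compact subsets, applied to $u|_{\O_\d} \in L^p(\O_\d)$ (note $u \in H^{s,p,\d}(\O)$ gives $u \in L^p(\O_\d)$ by definition of the norm). For $D^s_\d u^\e \to D^s_\d u$ in $L^p_{\loc}(\O)$, I would combine part \emph{\ref{item mollifier nonlocal gradient})} with the same classical fact: on any compact $K \subset \O$, pick $\e$ small enough that $K \subset \O_{-\e\d}$, so $D^s_\d u^\e = \eta_\e * (D^s_\d u)$ on $K$; since $D^s_\d u \in L^p(\O)$ and $K \Subset \O$, mollification converges in $L^p(K)$. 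One must check the support/domain bookkeeping: $\eta_\e * (D^s_\d u)$ restricted to $K$ only uses $D^s_\d u$ on $K + B(0,\e\d) \subset \O$ for $\e$ small, so this is well-defined and the standard convergence applies.

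The main obstacle — really the only non-routine point — is the domain bookkeeping in part \emph{\ref{item mollifier nonlocal gradient})}: making sure that when one commutes $D^s_\d$ past the mollification, every integral in the manipulation only ever samples $u$ on $\O_\d$ (where it is defined), so that the formal identity from the smooth case is legitimate. This requires carefully tracking the nested radii ($\d$ for $D^s_\d$, $\e\d$ for $\eta_\e$) and restricting to the collar $\O_{-\e\d}$ (or a slightly smaller set), then invoking the density of $C^\infty_c(\Rn)$ in the $H^{s,p,\d}$-norm together with the $L^p\to L^p$ boundedness of convolution with $Q^s_\d$ and $\eta_\e$ to pass the identity to the limit. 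Everything else is the textbook theory of mollifiers (e.g. as in \cite{Grafakos08a}).
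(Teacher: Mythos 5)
Your proposal is correct and follows essentially the same route as the paper: both rest on the representation $D^s_\d u=\nabla(Q^s_\d*u)$ and on the associativity of the convolutions on the collar $\O_{-\e\d}$, with the same radius bookkeeping ($\d$ for $Q^s_\d$, $\e\d$ for $\eta_\e$), and part \emph{b)} is standard mollifier convergence in both. The only difference is that the paper needs no density argument in part \emph{a)}: the representation formula is already valid for every $u\in H^{s,p,\d}(\O)$ (Proposition \ref{Prop: convolution with the classical gradient}), so the identity follows directly from $Q^s_\d*(u*\eta_\e)=(Q^s_\d*u)*\eta_\e$ on $\O_{-\e\d}$.
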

\begin{proof}
In order to prove \emph{\ref{item mollifier nonlocal gradient})}, we recall that by \cite[Lemma 4.2 \emph{b)}]{BeCuMC22b}, 
\begin{equation*}
D^s_\d u= \nabla (Q^s_\d *u) \quad \text{ in } \O ,
\end{equation*}
for all $u \in H^{s,p,\d}(\O)$. Since we have that for $x \in \O_{-\e\d}$
					\begin{equation*}
						Q^s_\d*(u*\eta_\e)(x)=\int_{B(0,\d)} Q^s_\d(y)\int_{B(0,\e\d)} u(x-y-z) \eta_\e(z) \,dz\, dy =(Q^s_\d *u)*\eta_\e(x)
					\end{equation*}
					(notice that $x-y-z \in \O_\d$), we can write
					\begin{equation*}
						D^s_\d (u*\eta_\e)(x)=\nabla(Q^s_\d (u*\eta_\e))(x)= \nabla((Q^s_\d *u)*\eta_\e)(x)=\nabla(Q^s_\d *u)*\eta_\e(x)=(D^s_\d u)*\eta_\e(x),
					\end{equation*}
				which shows \emph{\ref{item mollifier nonlocal gradient})}.
				
					Regarding \emph{b)}, since $D^s_\d u \in L^p(\O,\Rn)$ and $u\in L^p(\O_\d)$, for every $\omega \Subset \O$ we have that
					\begin{equation*}
						\lim_{\e \searrow 0} \|D^s_\d u^\e-D^s_\d u\|_{L^p(\omega)}=\lim_{\e \searrow 0} \|\eta_\e*D^s_\d u-D^s_\d u\|_{L^p(\omega)}=0.
					\end{equation*}
					In the same way, for every $\omega'\Subset \O_\d$ it yields
					\begin{equation*}
						\lim_{\e \searrow 0} \| u^\e- u\|_{L^p(\omega')}=0
					\end{equation*}
					and the proof is finished.
				\end{proof}
			Next we show a set of technical results employed in Section \ref{se: NL fundamental solution}. The first one is a lemma that justifies that the gradient of a certain function can be identified as a distribution, even when it is not locally integrable at zero.
				\begin{lem} \label{lem: distribution pv 1-2s}
Let $0<s<1$. 
The function $\partial_j \frac{1}{|x|^{n-2s}}$ can be identified with the tempered distribution defined componentwise as
\begin{equation}\label{eq:nablaQ}
	\left\langle \partial_j \frac{1}{|x|^{n-2s}}, \f \right\rangle = \frac{-1}{n-2s} \int_{\{ x_j>0\}} \frac{x_j}{|x|}\frac{1}{|x|^{n+1-2s}} (\f(x)-\f(-x) )\, dx , \qquad j \in \{1, \ldots, n\} 
\end{equation}
when $(n,s)\neq (1,\frac{1}{2})$. Otherwise,
\begin{equation*}
\langle \partial_j \log|x|, \f \rangle =  \int_{\{ x_j>0\}} \frac{x_j}{|x|}\frac{1}{|x|^{n+1-2s}} (\f(x)-\f(-x) )\, dx , \qquad j \in \{1, \ldots, n\} .
					\end{equation*}
				\end{lem}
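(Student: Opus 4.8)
The plan is to show that the classical distributional derivative $\partial_j$ of the locally integrable function $x\mapsto |x|^{-(n-2s)}$ (locally integrable since $n-2s < n$ for $s>0$) coincides with the principal-value functional on the right-hand side of~\eqref{eq:nablaQ}. First I would record the pointwise identity $\partial_j |x|^{-(n-2s)} = -(n-2s)\,x_j\,|x|^{-(n+2-2s)} = -(n-2s)\,\frac{x_j}{|x|}\,|x|^{-(n+1-2s)}$, valid for $x\neq 0$, and note that this kernel is \emph{not} locally integrable at the origin (its size is $|x|^{-(n+1-2s)}$ with $n+1-2s \geq n$), so the functional must be interpreted as a principal value; the antisymmetry $x_j/|x| \mapsto -x_j/|x|$ under $x\mapsto -x$ is what makes this principal value converge.

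The main computation is the following. Fix $\f \in \mathcal{S}(\Rn)$ (or $C^\infty_c$). By definition of the distributional derivative, $\langle \partial_j |x|^{-(n-2s)}, \f\rangle = -\int_{\Rn} |x|^{-(n-2s)}\,\partial_j \f(x)\,dx$. I would excise a small ball: write this as $-\lim_{\e\to 0^+}\int_{|x|>\e} |x|^{-(n-2s)}\,\partial_j\f(x)\,dx$, which is legitimate by dominated convergence since $|x|^{-(n-2s)}\partial_j\f \in L^1$. On $\{|x|>\e\}$ the function $|x|^{-(n-2s)}$ is smooth, so I integrate by parts (divergence theorem), picking up a boundary term on $\partial B(0,\e) = \{|x|=\e\}$ with inward normal (relative to the exterior region) $-x/|x|$:
\begin{equation*}
  -\int_{|x|>\e} |x|^{-(n-2s)}\partial_j\f(x)\,dx = \int_{|x|>\e}\big(\partial_j |x|^{-(n-2s)}\big)\f(x)\,dx + \int_{|x|=\e}\e^{-(n-2s)}\f(x)\,\frac{x_j}{\e}\,d\mathcal{H}^{n-1}(x).
\end{equation*}
The boundary term is $O(\e^{-(n-2s)}\cdot \e^{n-1}\cdot \e) = O(\e^{2s}) \to 0$ as $\e\to 0^+$ since $s>0$ (for $\f$ bounded; one keeps the factor $x_j/\e$ which is bounded by $1$). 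Hence $\langle\partial_j |x|^{-(n-2s)},\f\rangle = \lim_{\e\to 0^+}\int_{|x|>\e}\big({-(n-2s)}\big)\frac{x_j}{|x|}\frac{1}{|x|^{n+1-2s}}\f(x)\,dx$. Finally I fold the integral over $\{|x|>\e\}$ into the half-space $\{x_j>0\}$ by substituting $x\mapsto -x$ on the part where $x_j<0$: because the kernel $\frac{x_j}{|x|}|x|^{-(n+1-2s)}$ is odd in $x$, the two contributions combine to $\frac{x_j}{|x|}|x|^{-(n+1-2s)}(\f(x)-\f(-x))$ over $\{x_j > 0, |x|>\e\}$, and now the integrand is absolutely integrable (since $\f(x)-\f(-x) = O(|x|)$ near the origin, the singularity is $|x|^{-(n-2s)}$, which is integrable), so the limit can be removed. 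This yields exactly~\eqref{eq:nablaQ}. The case $(n,s)=(1,\tfrac12)$ is identical with $|x|^{-(n-2s)}=|x|^{0}$ replaced by $\log|x|$: one has $\partial_j \log|x| = x_j/|x|^2 = \frac{x_j}{|x|}\frac{1}{|x|^{n+1-2s}}$ with $n=1,s=\tfrac12$, the boundary term is $O(\e|\log\e|)\to 0$, and the sign is $+1$ rather than $-\tfrac{1}{n-2s}$, matching the stated formula.

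I do not anticipate a serious obstacle; the only subtle point is making sure the principal-value limit and the half-space symmetrization are handled in the right order — one must keep the $\e$-regularization until \emph{after} the substitution $x\mapsto-x$ is used to produce the factor $\f(x)-\f(-x)$, and only then invoke dominated convergence to drop the limit. I would also briefly remark that the right-hand side of~\eqref{eq:nablaQ} indeed defines a tempered distribution (continuity in $\f$ follows from the bound $|\f(x)-\f(-x)| \leq \min(2\|\f\|_\infty,\, |x|\,\|\nabla\f\|_\infty)$ splitting the integral at $|x|=1$ and controlling the tail by a Schwartz seminorm), so the identification is meaningful as an equality of elements of $\mathcal{S}'$.
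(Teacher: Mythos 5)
Your core argument is the same as the paper's: the pointwise identity $\partial_j|x|^{-(n-2s)}=-(n-2s)\frac{x_j}{|x|}|x|^{-(n+1-2s)}$, the folding of the $\e$-truncated integral onto the half-space $\{x_j>0\}$ using the oddness of the kernel, the mean value bound $|\f(x)-\f(-x)|\lesssim|x|$ to get absolute integrability of the folded integrand, and dominated convergence to pass $\e\to0$. What you add — and the paper leaves implicit — is the verification via integration by parts on $\{|x|>\e\}$ that this principal value really is the distributional derivative $-\int|x|^{-(n-2s)}\partial_j\f$; that is a worthwhile strengthening, since the lemma is later used through the identity $\mathcal{F}(\partial_j f)=2\pi i\xi_j\widehat f$.

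However, your estimate of the spherical boundary term has a gap. The justification you give ($\f$ bounded and $|x_j|/\e\le1$) yields only
\[
\left|\int_{|x|=\e}\e^{-(n-2s)}\f(x)\frac{x_j}{\e}\,d\mathcal{H}^{n-1}(x)\right|
\le \|\f\|_\infty\,\sigma_{n-1}\,\e^{-(n-2s)}\e^{n-1}=O(\e^{2s-1}),
\]
not the claimed $O(\e^{2s})$; the extra factor of $\e$ in your count is unaccounted for, and $O(\e^{2s-1})$ does not tend to zero when $s\le\frac12$. The conclusion is nevertheless correct: since $\int_{|x|=\e}x_j\,d\mathcal{H}^{n-1}(x)=0$, you may replace $\f(x)$ by $\f(x)-\f(0)$ in the boundary integral, and then $|\f(x)-\f(0)|\le\|\nabla\f\|_\infty\,\e$ supplies the missing factor of $\e$ and gives the genuine $O(\e^{2s})$. (In the case $(n,s)=(1,\frac12)$ this cancellation is automatic, as your $\log\e\,(\f(\e)-\f(-\e))=O(\e|\log\e|)$ shows.) With that one correction the proof is complete.
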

				\begin{proof}
					By the chain rule we have that for $x \neq 0$, 
					\begin{equation*}
						\frac{x_j}{|x|}\frac{1}{|x|^{n+1-2s}}=
						\begin{cases}
							-(n-2s)\partial_j \frac{1}{|x|^{n-2s}} &\text{ if } n\neq2s \\
							\partial_j \log|x| &\text{ if } n=2s. 
						\end{cases}
					\end{equation*}
Assume that $n-2s\neq0$. Therefore, $\nabla \frac{1}{|x|^{n-2s}} \chi_{B(0, \e)^c}$ is in $L^1 (\Rn)$ for each $\e>0$, so it can be identified with a tempered distribution. For $j \in \{1, \ldots, n\}$ let 
\[
B^{\pm}_j (0,\e)^c=\{ x \in B(0,\e)^c : \pm x_j>0 \}.
\]
Then
					\begin{align*}
						\int_{B(0,\e)^c} \frac{x_j}{|x|}\frac{1}{|x|^{n+1-2s}}\f(x) \, dx 
						&= \int_{B^+_j (0,\e)^c} \frac{x_j}{|x|}\frac{1}{|x|^{n+1-2s}} (\f(x)-\f(-x) )\, dx .
					\end{align*}
					By the mean value theorem, 
					\[
					\left|\frac{x_j}{|x|}\frac{1}{|x|^{n+1-2s}} (\f(x)-\f(-x) ) \chi_{B^+_j (0,\e)^c} (x) \right|\leq
					\frac{2 \| \nabla \f\|_{\infty} }{|x|^{n-2s}} \chi_{B (0,\d)} (x) .
					\]
					This shows that formula \eqref{eq:nablaQ} defines a tempered distribution; moreover, by dominated convergence we obtain that
					\[
					\int_{B^+_j (0,\e)^c} \frac{x_j}{|x|}\frac{1}{|x|^{n+1-2s}} (\f(x)-\f(-x) )\, dx  \to \int_{\{ x_j>0\}} \frac{x_j}{|x|}\frac{1}{|x|^{n+1-2s}} (\f(x)-\f(-x) )\, dx  
					\]
					as $\e \to 0$. 
					
					The case $n=2s$ is completely analogous.
				\end{proof}
The previous lemma is needed for the next result where we compute the Fourier transform of a particular function, a result that we have not been able to find in the literature.
\begin{lem} \label{lemma: Fourier transform singular vector Riesz potential}
If $n \geq 2$ and $-1<\a<n-1$ or $n=1$ and $\a=-1+2s$, with $s\in (0,1)$, then
\begin{equation*}
-i \frac{\xi}{|\xi|} |2 \pi \xi|^{-\a}=\begin{cases}
	\mathcal{F} \left( \frac{n-\a-1}{\gamma(1+\a)}\frac{x}{|x|^{n-\a+1}} \right) (\xi)& \text{ if } (n,\a)\neq (1,0) \\
\mathcal{F} \left(\frac{1}{ \pi} \pv \frac{1}{x}\right)(\xi)& \text{ if } (n,\a)=(1,0).
						\end{cases}
					\end{equation*}
				\end{lem}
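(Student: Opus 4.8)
The plan is to reduce the claimed identity to known Fourier transforms of homogeneous distributions. I would begin by recalling the classical formula for the Fourier transform of a radial power, namely that for a suitable range of exponents one has $\mathcal{F}(|x|^{-\mu})(\xi) = C_{n,\mu}|\xi|^{-(n-\mu)}$ with the constant given in terms of $\Gamma$ functions (this is exactly the content encoded in the constant $\gamma(s)$ of~\eqref{kernel}). The right-hand side of the claimed identity is not radial, but vector radial, so I would handle the vector structure by writing $\frac{x}{|x|^{n-\a+1}} = \frac{1}{-(n-\a-1)}\nabla\!\left(\frac{1}{|x|^{n-\a-1}}\right)$ (valid pointwise away from the origin, for $n-\a-1\neq 0$, and understood distributionally via Lemma~\ref{lem: distribution pv 1-2s}). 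Then the Fourier transform of a gradient turns $\nabla$ into multiplication by $2\pi i \xi$, so computing $\mathcal{F}\bigl(\frac{x}{|x|^{n-\a+1}}\bigr)$ amounts to differentiating $\mathcal{F}\bigl(|x|^{-(n-\a-1)}\bigr) = C\,|\xi|^{-(\a+1)}$, which gives a vector radial distribution of the form $c\,\xi|\xi|^{-(\a+1)-2}\cdot(\text{const})$; keeping track of the constant $\gamma(1+\a)$ produces the stated $-i\frac{\xi}{|\xi|}|2\pi\xi|^{-\a}$.

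Concretely, the steps in order would be: (i) record the classical homogeneous Fourier transform pair $\mathcal{F}(|x|^{-(n-\a-1)}) = \gamma(1+\a)^{-1}|2\pi\xi|^{-(\a+1)}$ up to the normalization convention in use (this is where $\gamma$ from~\eqref{kernel} is designed to appear cleanly), taking care that the hypothesis $-1<\a<n-1$ is exactly the condition ensuring $0 < n-\a-1 < n$ so that $|x|^{-(n-\a-1)}$ is a tempered distribution with a well-defined Fourier transform in this range; (ii) apply $\nabla$ in $x$, equivalently multiply the transform side by $2\pi i\xi$, using Lemma~\ref{lem: distribution pv 1-2s} to justify that $\partial_j |x|^{-(n-\a-1)}$ is the principal-value tempered distribution $-(n-\a-1)\frac{x_j}{|x|}|x|^{-(n-\a+1)}$; (iii) differentiate $|2\pi\xi|^{-(\a+1)}$ to obtain $-(\a+1)\frac{\xi_j}{|\xi|}|2\pi\xi|^{-(\a+1)-1}\cdot(2\pi)$, then simplify the resulting scalar factors $\frac{2\pi i \cdot (-(\a+1)) \cdot (2\pi)}{-(n-\a-1)\,\gamma(1+\a)}$ and check it collapses to $-i$ after the identity $\gamma(1+\a) = \frac{\pi^{n/2}2^{\a+1}\Gamma(\frac{\a+1}{2})}{\Gamma(\frac{n-\a-1}{2})}$ and the functional equation $\Gamma(t+1)=t\Gamma(t)$; and (iv) treat the exceptional case $n=1$, $\a = -1+2s$: when $\a \neq 0$ the same computation goes through with $n-\a-1 = 2-2s > 0$, while when $\a=0$ (i.e. $s=\tfrac12$) the power $|x|^{-(n-\a-1)} = |x|^{0}$ degenerates and one must instead use the known transform $\mathcal{F}(\log|x|)$, whose distributional derivative is $\operatorname{pv}\frac1x$, producing $\mathcal{F}(\frac1\pi\operatorname{pv}\frac1x)(\xi) = -i\,\mathrm{sgn}(\xi) = -i\frac{\xi}{|\xi|}$, matching $-i\frac{\xi}{|\xi|}|2\pi\xi|^{0}$.

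The main obstacle I anticipate is not the strategy but bookkeeping: getting the multiplicative constants exactly right under the chosen normalization $\widehat f(\xi)=\int f(x)e^{-2\pi i\xi\cdot x}dx$, and making sure the distributional manipulations (Fourier transform of a non-integrable homogeneous function, then differentiating it) are justified rather than merely formal. Since $|x|^{-(n-\a-1)}$ and $|2\pi\xi|^{-(\a+1)}$ are genuine tempered distributions in the stated $\a$-range and differentiation is a continuous operation on $\mathcal{S}'$, the only real care needed is in identifying the derivative as the principal-value distribution of Lemma~\ref{lem: distribution pv 1-2s} rather than a naive pointwise derivative — that lemma is precisely what closes this gap. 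A secondary, purely computational subtlety is the edge of the parameter range: at $\a \to n-1$ or $\a \to -1$ the constant $\gamma(1+\a)$ or the prefactor $n-\a-1$ blows up or vanishes, consistent with the transform pair breaking down, so the hypotheses in the statement are sharp and no further case analysis beyond $(n,\a)=(1,0)$ is required.
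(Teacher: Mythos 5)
Your overall strategy is the same as the paper's: write $\frac{x}{|x|^{n-\a+1}}$ as $\frac{-1}{n-\a-1}\nabla\bigl(|x|^{-(n-\a-1)}\bigr)$, justify the distributional identification via Lemma~\ref{lem: distribution pv 1-2s}, use the known Fourier transform of the Riesz potential $I_{1+\a}$ (for which the hypothesis $-1<\a<n-1$, i.e.\ $0<n-\a-1<n$, is exactly the right range), and convert the gradient into multiplication by $2\pi i\xi_j$. Your treatment of $(n,\a)=(1,0)$ via $\mathcal F(\pv\frac1x)(\xi)=-i\,\sgn(\xi)$ is also equivalent to the paper's (which instead transforms $\tfrac12\sgn x$ and applies Fourier inversion). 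Steps (i) and (ii) of your outline, carried out as stated, close the proof.

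However, step (iii) contradicts step (ii) and would fail if followed literally. Having correctly said that $\nabla_x$ becomes multiplication by $2\pi i\xi$ on the transform side, you then propose to \emph{differentiate} $|2\pi\xi|^{-(\a+1)}$ in $\xi$; these are different operations ($\partial_{\xi_j}$ on the Fourier side corresponds to multiplication by $-2\pi i x_j$, not to $\partial_{x_j}$). Differentiating produces $-(\a+1)(2\pi)\frac{\xi_j}{|\xi|}|2\pi\xi|^{-(\a+2)}$, which is homogeneous of degree $-(\a+2)$ rather than $-\a$, so no choice of constants can make it equal $-i\frac{\xi_j}{|\xi|}|2\pi\xi|^{-\a}$; the scalar factor you propose to ``collapse to $-i$'' carries spurious powers of $2\pi$ and the factor $\a+1$ for the same reason. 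The correct step is simply
\[
\mathcal F\Bigl(\partial_j\,|x|^{-(n-\a-1)}\Bigr)(\xi)
=2\pi i\,\xi_j\cdot\gamma(1+\a)\,|2\pi\xi|^{-(1+\a)}
=i\,\frac{\xi_j}{|\xi|}\,\gamma(1+\a)\,|2\pi\xi|^{-\a},
\]
which, combined with $\partial_j|x|^{-(n-\a-1)}=-(n-\a-1)\frac{x_j}{|x|^{n-\a+1}}$ from step (ii), gives the claimed identity with no further $\Gamma$-function gymnastics. Replace (iii) accordingly and the argument coincides with the paper's proof.
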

				\begin{proof} Let $(n,\a)\neq (1,0)$.
					Fix $j \in \{ 1, \ldots, n\}$.
					On one hand, by using Lemma \ref{lem: distribution pv 1-2s} when $n=1$ we have that
					\[
					\frac{1}{\gamma(1+\a)}\frac{\partial }{\partial x_j}\frac{1}{|x|^{n-(\a+1)}} = -\frac{n-\a-1}{\gamma(1+\a)}\frac{x_j}{|x|^{n-\a+1}}.
					\]
Thus, 
\begin{equation} \label{eq: riesz potential transform 1}
						\frac{1}{\gamma(1+\a)} \mathcal{F} \left(\frac{\partial }{\partial x_j}\frac{1}{|x|^{n-(\a+1)}}\right) (\xi) = -\frac{n-\a-1}{\gamma(1+\a)} \mathcal{F} \left( \frac{x_j}{|x|^{n-\a+1}} \right) (\xi).
					\end{equation}
On the other hand, by standard properties of the Fourier transform, and, in particular, by the Fourier transform of the Riesz potential,	\begin{equation} \label{eq: riesz potential transform 2}
		\mathcal{F} \left( \frac{\partial }{\partial x_j}\frac{1}{\gamma(1+\a)}\frac{1}{|x|^{n-(\a+1)}}\right) (\xi)= 2\pi i \xi_j \widehat{I}_{1+\a}=2\pi i \xi_j |2\pi \xi|^{-(1+\a)}=i \frac{\xi_j}{|\xi|} |2\pi \xi|^{-\a}.
					\end{equation}
By combining \eqref{eq: riesz potential transform 1} and \eqref{eq: riesz potential transform 2} we obtain the conclusion.
					
					Assume now  that $(n,\a)= (1,0)$. Then, $\frac{\xi}{|\xi|} |2\pi \xi|^{-\a}$ is the sign function, whose Fourier transform is known (see for example (\cite[Lemma B.1 \emph{c)}]{BeCuMC22})). In particular we have
					\begin{equation*}
						\mathcal{F}\left(\frac{1}{2}\frac{x}{|x|}\right)(\xi)=-i \frac{\xi}{|\xi|} \frac{1}{|2\pi \xi|}.
					\end{equation*}
					Applying Fourier transform to both terms and using that $\mathcal{F}^2(f)(x)=f(-x)$ it yields
					\begin{equation*}
						-i\frac{\xi}{|\xi|}=\mathcal{F}\left( \frac{1}{\pi} \pv \frac{1}{x} \right)(\xi).
					\end{equation*}
				\end{proof}
			In the next result we compare two functions by studying the integrability of the $n$-th derivative of their difference. Namely, we consider the function chosen as a candidate for the gradient of the fundamental solution of the nonlocal Laplacian (Section \ref{se: NL fundamental solution}) and the Riesz potential whose behavior it resembles around zero.
				\begin{lem} \label{lem: behaviour around zero}
					Let $0<s<1$ and $Y$ be the function from \eqref{eq: auxiliar Fourier function at zero}, and $\f \in C^{\infty}_c(\Rn)$ then $	\left(\widehat{\varPsi^s_\d}-Y\right)\f \in L^1(\Rn)$ and
					\[
					\left[(\partial_i)^n	\left(\widehat{\varPsi^s_\d}-Y\right)_i\right]\varphi \in L^1(\Rn)
					\]
				\end{lem}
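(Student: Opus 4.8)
The statement asserts two things about the difference $\widehat{\varPsi^s_\d}-Y$ multiplied by a cutoff $\f\in C^\infty_c(\Rn)$: (i) $(\widehat{\varPsi^s_\d}-Y)\f\in L^1(\Rn)$, and (ii) componentwise $[(\p_i)^n(\widehat{\varPsi^s_\d}-Y)_i]\f\in L^1(\Rn)$. Since $\f$ has compact support, both statements are purely local near the origin (away from $0$ everything is smooth), so the whole point is to quantify the behaviour of $\widehat{\varPsi^s_\d}(\xi)-Y(\xi)$ and of its derivatives up to order $n$ as $\xi\to0$. The plan is to expand
\[
  \widehat{\varPsi^s_\d}(\xi)=-i\frac{\xi}{|\xi|}\frac{1}{|2\pi\xi|}\frac{1}{\widehat{Q^s_\d}(\xi)^2},
  \qquad
  Y(\xi)=-i\frac{\xi}{|\xi|}\frac{1}{|2\pi\xi|}\frac{1}{\widehat{Q^s_\d}(0)^2},
\]
and factor out the common singular prefactor $g(\xi):=-i\frac{\xi}{|\xi|}\frac{1}{|2\pi\xi|}$, which is homogeneous of degree $-1$, writing
\[
  \widehat{\varPsi^s_\d}(\xi)-Y(\xi)=g(\xi)\Bigl(\tfrac{1}{\widehat{Q^s_\d}(\xi)^2}-\tfrac{1}{\widehat{Q^s_\d}(0)^2}\Bigr)
  =g(\xi)\,R(\xi),
\]
where $R(\xi):=\widehat{Q^s_\d}(\xi)^{-2}-\widehat{Q^s_\d}(0)^{-2}$.

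First I would record the regularity of $R$ near the origin. From \cite{BeCuMC22} (Proposition~5.2, Proposition~5.5) we have $\widehat{Q^s_\d}\in C^\infty(\Rn)$ with $\widehat{Q^s_\d}(0)=\|Q^s_\d\|_{L^1}>0$ and $\widehat{Q^s_\d}>0$ everywhere; moreover $\widehat{Q^s_\d}$ is radial, so its radial representation is an even smooth function of $|\xi|$, hence $\nabla\widehat{Q^s_\d}(0)=0$. Consequently $\xi\mapsto\widehat{Q^s_\d}(\xi)^{-2}$ is smooth near $0$ with vanishing gradient at $0$, so by Taylor's theorem $R(\xi)=O(|\xi|^2)$ and $|\p^\a R(\xi)|\le C_\a|\xi|^{2-|\a|}$ for $|\a|\le 2$, and $\p^\a R$ is bounded near $0$ for all $\a$ with $|\a|\ge 2$. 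This is the quantitative input that makes the singular factor integrable.

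For (i): near the origin $|g(\xi)|=\tfrac{1}{2\pi|\xi|}$ and $|R(\xi)|\le C|\xi|^2$, so $|(\widehat{\varPsi^s_\d}-Y)(\xi)|\le C|\xi|$ on $\supp\f$, which is integrable on a neighbourhood of $0$ in $\Rn$; away from $0$ the function is continuous on the compact set $\supp\f$. Hence $(\widehat{\varPsi^s_\d}-Y)\f\in L^1(\Rn)$. For (ii), I would apply the Leibniz rule to differentiate $(g\,R\,\f)_i$ $n$ times in the variable $\xi_i$. Each term is a product of $\p_i^{j}g_i$ (homogeneous of degree $-1-j$, so bounded by $C|\xi|^{-1-j}$ near $0$), $\p_i^{k}R$ (bounded by $C|\xi|^{2-k}$ if $k\le 2$, bounded otherwise), and $\p_i^{m}\f$ (bounded), with $j+k+m=n$. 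The worst case is $k\le 2$, giving a pointwise bound $C|\xi|^{-1-j+2-k}=C|\xi|^{1-j-k}\ge C|\xi|^{1-n}$; since $|\xi|^{1-n}\in L^1_{\mathrm{loc}}(\Rn)$, every term is locally integrable near $0$, and away from $0$ everything is continuous on $\supp\f$. Hence the claimed function is in $L^1(\Rn)$.

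The only real obstacle is justifying that $R=O(|\xi|^2)$ rather than merely $O(|\xi|)$ — that is, that the first-order term in the expansion of $\widehat{Q^s_\d}(\xi)^{-2}$ at the origin vanishes. This is where radiality of $Q^s_\d$ (Lemma~\ref{lem: kernel primitive}) is essential: a radial $L^1$ function has a radial Fourier transform whose radial profile is even and smooth at $0$ (see \cite[App.~B.5]{Grafakos08a}), forcing $\nabla\widehat{Q^s_\d}(0)=0$. Once that gradient-vanishing is in place, the extra power $|\xi|^{1}$ gained over the naive estimate is exactly what converts the would-be $|\xi|^{-n}$ singularity (non-integrable) in the $n$-th derivative into an integrable $|\xi|^{1-n}$, so it is worth isolating this point as a short lemma or explicit remark in the write-up rather than leaving it implicit.
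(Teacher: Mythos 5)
Your proposal is correct and follows essentially the same route as the paper: factor out the homogeneous degree $-1$ prefactor $-i\frac{\xi}{|\xi|}\frac{1}{|2\pi\xi|}$, observe that $\nabla\widehat{Q^s_\d}(0)=0$ (the paper gets this from $\widehat{Q^s_\d}$ being analytic with a maximum at $0$, you from radiality --- both valid) so that the remaining factor is $O(|\xi|^2)$ with controlled derivatives, and conclude integrability of the $n$-th derivative near the origin. Your write-up is in fact more explicit than the paper's about the Leibniz-rule bookkeeping, which is a welcome addition rather than a deviation.
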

				\begin{proof}
					\begin{equation*}
						\widehat{\varPsi^s_\d}(\xi)-Y(\xi)=-i\frac{\xi}{|\xi|}\frac{1}{|2\pi \xi|} \left( \frac{[\widehat{Q_\d^s}(0)-\widehat{Q_\d^s}(\xi)][\widehat{Q_\d^s}(0)+\widehat{Q_\d^s}(\xi)]}{\widehat{Q_\d^s}(\xi)^2\widehat{Q_\d^s}(0)^2} \right) .
					\end{equation*}
					Now, since $\widehat{Q_\d^s}$ is analytic and has a maximum at $\xi=0$ (see \cite[Prop. 5.2]{BeCuMC22}), we have that $\nabla \widehat{Q}_\d^s(0)=0$ and for $i \in \{1, \ldots, n\}$,
					\begin{equation*}
						\widehat{Q_\d^s}(0)-\widehat{Q_\d^s}(\xi)= \sum_{j=2}^\infty \frac{(\partial_i)^j \widehat{Q_\d^s}(0)}{j!}\xi_i^j.
					\end{equation*}
					As a consequence, combining that with the positivity of $Q^s_\d$ and boundedness of its derivatives \cite[Prop. 5.2 and 5.5]{BeCuMC22}, we have that $\left[(\partial_i)^n	(\widehat{\varPsi^s_\d}-Y)_i\right]\varphi \in L^1(\Rn) $.
				\end{proof}
	Finally, in order to complete the study of the function shown to be the gradient of the fundamental solution we need the following lemma where we obtain a bound for its derivatives. The proof of  this result follows the same steps as in \cite[Lemma 5.7]{BeCuMC22} for the function $V^s_\d$, with the sole difference that now $\varPsi^s_\d(\xi)=\frac{1}{\widehat{Q}^s_\d(\xi)}V^s_\d(\xi)$.
				\begin{lem} \label{le: bounds of G hat}
Let $0<s<1$. For every $\a \in \N^n$ there exists $C_\a>0$ such that for any $|\xi|\geq 1$
					\begin{equation*}
						\left|\partial^\a \widehat{\varPsi^s_\d}(\xi)\right| \leq \frac{C_\a}{|\xi|^{s(|\a|+2)-1}}.
					\end{equation*}
				\end{lem}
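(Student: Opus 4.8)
The plan is to reduce the statement to the factorization
$\widehat{\varPsi^s_\d}(\xi)=g(\xi)\,\widehat{Q^s_\d}(\xi)^{-2}$, where, by the definition adopted in Proposition~\ref{prop: gradient of the NL Green formula}, $g(\xi):=-i\frac{\xi}{|\xi|}\frac{1}{|2\pi\xi|}=\frac{-i\xi}{2\pi|\xi|^{2}}$. The virtue of this splitting is that $g$ is smooth on $\Rn\setminus\{0\}$ and positively homogeneous of degree $-1$, whence $|\partial^{\b}g(\xi)|\le C_{\b}|\xi|^{-1-|\b|}$ for every multi-index $\b$; while $\widehat{Q^s_\d}$ is smooth and strictly positive on all of $\Rn$ by~\cite[Prop.~5.2 and 5.5]{BeCuMC22}, and for $|\xi|\ge1$ it can be compared with its leading term: by~\cite[Prop.~5.2]{BeCuMC22} and the analysis in the proof of~\cite[Th.~5.9]{BeCuMC22} one may write $\widehat{Q^s_\d}(\xi)=a_0|2\pi\xi|^{-(1-s)}+r(\xi)$ with $r$ and all of its derivatives decaying faster than any negative power of $|\xi|$. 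This is precisely the situation of~\cite[Lem.~5.7]{BeCuMC22}, which handles $\widehat{V^s_\d}=g\,\widehat{Q^s_\d}^{-1}$; the only change here is the extra factor $\widehat{Q^s_\d}^{-1}$, i.e.\ the exponent $-2$ in place of $-1$.

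First I would record the two scalar estimates on $\widehat{Q^s_\d}$ that drive everything, both for $|\xi|\ge1$: a lower bound $\widehat{Q^s_\d}(\xi)\ge c\,|\xi|^{-(1-s)}$ (immediate from the asymptotics together with the continuity and strict positivity of $\widehat{Q^s_\d}$), and the derivative bounds $|\partial^{\b}\widehat{Q^s_\d}(\xi)|\le C_{\b}|\xi|^{-(1-s)-|\b|}$ (differentiate the explicit leading term $a_0|2\pi\xi|^{-(1-s)}$ and absorb the rapidly decaying contribution of $r$). Then, applying the Faà di Bruno formula to $H:=\widehat{Q^s_\d}^{-2}$, the derivative $\partial^{\g}H$ is a finite linear combination of terms $\widehat{Q^s_\d}^{-(2+k)}\,\partial^{\g_1}\widehat{Q^s_\d}\cdots\partial^{\g_k}\widehat{Q^s_\d}$ with $1\le k\le|\g|$, each $\g_l\ne0$ and $\g_1+\dots+\g_k=\g$; inserting the two estimates above, each such term is bounded on $\{|\xi|\ge1\}$ by a constant times
\begin{equation*}
    |\xi|^{(2+k)(1-s)}\cdot|\xi|^{-k(1-s)-|\g|}=|\xi|^{2(1-s)-|\g|},
\end{equation*}
the exponent being independent of $k$ and of the chosen partition, so $|\partial^{\g}H(\xi)|\le C_{\g}|\xi|^{2(1-s)-|\g|}$ for all $|\xi|\ge1$.

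Finally I would apply the Leibniz rule to $\widehat{\varPsi^s_\d}=g\,H$: for $|\xi|\ge1$,
\begin{equation*}
    \bigl|\partial^{\a}\widehat{\varPsi^s_\d}(\xi)\bigr|
    \le\sum_{\b\le\a}\binom{\a}{\b}\,|\partial^{\b}g(\xi)|\,|\partial^{\a-\b}H(\xi)|
    \le C_{\a}\sum_{\b\le\a}|\xi|^{-1-|\b|}\,|\xi|^{2(1-s)-(|\a|-|\b|)}
    =C_{\a}\,|\xi|^{1-2s-|\a|}.
\end{equation*}
Since $|\xi|\ge1$ and $0<s<1$ give $|\xi|^{-|\a|}\le|\xi|^{-s|\a|}$, this yields $|\partial^{\a}\widehat{\varPsi^s_\d}(\xi)|\le C_{\a}|\xi|^{1-2s-s|\a|}=C_{\a}|\xi|^{-(s(|\a|+2)-1)}$, which is the asserted bound (indeed with a bit to spare). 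The only point deserving care is the second step — the uniform pointwise control of the derivatives of $\widehat{Q^s_\d}$, and hence of $\widehat{Q^s_\d}^{-2}$, for $|\xi|\ge1$; but this rests entirely on properties of $\widehat{Q^s_\d}$ already established in~\cite{BeCuMC22} (Proposition~5.2, Proposition~5.5 and the proof of Theorem~5.9), so no new estimate is needed, and the remainder is just the Leibniz/Faà di Bruno bookkeeping of powers of $|\xi|$, carried out exactly as in~\cite[Lem.~5.7]{BeCuMC22}.
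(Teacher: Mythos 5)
Your argument is correct and follows essentially the same route as the paper: both factor $\widehat{\varPsi^s_\d}$ into the degree~$-1$ homogeneous vector part times an inverse square involving $\widehat{Q^s_\d}$ (the paper composes $t\mapsto t^{-2}$ with $g_1(\xi)=|\xi|\widehat{Q^s_\d}(\xi)$ and runs the Fa\`a di Bruno bookkeeping of \cite[Lem.~5.7]{BeCuMC22}, exactly as you do for $\widehat{Q^s_\d}(\xi)^{-2}$), and both close with the Leibniz rule and the lower bound $\widehat{Q^s_\d}(\xi)\gtrsim|\xi|^{-(1-s)}$. The only divergence is that you input the sharper derivative decay $|\partial^\b\widehat{Q^s_\d}(\xi)|\lesssim|\xi|^{-(1-s)-|\b|}$ and so land on the stronger exponent $|\xi|^{1-2s-|\a|}$, whereas the paper uses only the derivative bounds on $g_1$ already packaged in \cite[Lem.~5.7]{BeCuMC22} and reaches the stated exponent directly; since your scheme also closes under those weaker bounds, nothing essential hinges on the sharper estimate you assert but do not prove.
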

				\begin{proof}
					We notice we can write $\widehat{\beta}^s_\d$ as $\frac{-i\xi}{|2\pi|} f$ where
					\begin{equation*}
						f=f_1' \circ g_1, \quad f_1(t)=t^{-1}, \quad g_1(\xi)=|\xi|\widehat{Q^s_\d}(\xi).
					\end{equation*}
					Let $\gamma \in \N^n$. We recall from \cite[Lemma 5.7]{BeCuMC22} the following inequality
					\begin{equation*}
						\left|\partial^{\gamma}(f_1\circ g_1(\xi))\right| \leq \sum_{k=1}^{|\gamma|} \left| f_1^{(k)} \circ g_1(\xi) \right|	\left|G_k \right (\xi)| \leq C_{\gamma} \sum_{k=1}^{\gamma} \frac{1}{|\xi|^{s(k+1)+|\gamma|-k}} \leq \frac{C_{\gamma}}{|\xi|^{s(|\gamma|+1)}}
					\end{equation*}
					where, according to Faà di Bruno's formula, $G_k$ is the notation used in \cite[Lemma 5.7]{BeCuMC22} to refer to a linear combination of products of $k$ partial derivatives of $g_1$ whose order adds up $|\gamma|$.
					
					Based on this, one can see that the process for $f$ is very similar with the sole difference of having the derivative of $f_1$, instead of $f_1$. Since by induction we have that
					\begin{equation*}
						\left| f_1^{(k+1)} \right| =\frac{C_k}{t^{k+2}}, \quad k \in \mathbb{N}, \, \, t>0,
					\end{equation*}
					for some constants $C_k>0$, we obtain the inequality
					\begin{equation*}
						\left| (f_1')^{(k)} \circ g_1(\xi) \right|\leq \frac{C_k}{\left(|\xi|\widehat{Q}^s_\d(\xi)\right)^{k+2}} \leq \frac{C_k}{\left(|\xi|\right)^{s(k+2)}}
					\end{equation*}
					as $\left|\frac{1}{\widehat{Q}^s_\d(\xi)}\right| \leq C |\xi|^{1-s}$ (\cite[Proposition 5.2]{BeCuMC22}). 
					All in all, it yields
					\begin{equation*} 
						\left|\partial^{\gamma}(f(\xi))\right|=\left|\partial^{\gamma}(f_1'\circ g_1(\xi))\right| \leq \sum_{k=1}^{|\gamma|} \left| f_1^{(k+1)} \circ g_1(\xi) \right|	\left|G_k \right (\xi)| \leq C_{\gamma} \sum_{k=1}^{\gamma} \frac{1}{|\xi|^{s(k+2)+|\gamma|-k}} \leq \frac{C_{\gamma}}{|\xi|^{s(|\gamma|+2)}}.
					\end{equation*}
				Finally, by Leibniz's formula,
				\begin{equation*}
					\left|\partial^\a \varPsi^s_\d(\xi) \right| \leq C_\a \sum_{\b \geq \a} |\partial^\b \xi | \left| \partial^{\a-\b}f(\xi)\right| \leq \frac{C_{\a}}{|\xi|^{s(|\a|+2)-1}}.
				\end{equation*}
				\end{proof}


		\end{document}